\theoremstyle{plain}
\newtheorem{thm}{Theorem}[section]
\newtheorem{cor}[thm]{Corollary}
\newtheorem{lem}[thm]{Lemma}
\newtheorem{prop}[thm]{Proposition}
\theoremstyle{definition}
\newtheorem{defn}[thm]{Definition}
\theoremstyle{remark}
\setlist[enumerate,1]{leftmargin=2em}
\def\N{\mathbb N}
\def\F{\mathbb F}
\def\Z{\mathbb Z}
\def\Ce{\mathfrak{C}}
\title[The Casimir elements of the Racah algebra]{The Casimir elements of the Racah algebra}
\author{Hau-Wen Huang}
\address{
Hau-Wen Huang\\
Department of Mathematics\\
National Central University\\
Chung-Li 32001 Taiwan
}
\email{hauwenh@math.ncu.edu.tw}
\author{Sarah Bockting-Conrad}
\address{
Sarah Bockting-Conrad\\
Department of Mathematical Sciences\\
DePaul University\\
Chicago, Illinois, USA}
\email{sbocktin@depaul.edu}
\thanks{The research of the first author is partially supported by the Ministry of Science and Technology of Taiwan under the project 106-2628-M-008-001-MY4. 
Part of the research was done when the first author was visiting RCPAM, Graduate School of Information Sciences, Tohoku University in summer 2017. He would like to thank Prof. Tanaka and Mr. Watanabe for their hospitality.
}
\begin{document}
\maketitle

\begin{abstract}
Let $\F$ denote a field with ${\rm char\,}\F\not=2$. The Racah algebra $\Re$ is the unital associative $\F$-algebra defined by generators and relations in the following way.  The generators are $A$, $B$, $C$, $D$. The relations assert that 
$$
[A,B]=[B,C]=[C,A]=2D
$$ 
and each of the elements
\begin{gather*}
\alpha=[A,D]+AC-BA,
\qquad
\beta=[B,D]+BA-CB,
\qquad
\gamma=[C,D]+CB-AC
\end{gather*}
is central in $\Re$. Additionally the element $\delta=A+B+C$ is central in $\Re$. 
We call each element in 
\begin{equation*}
D^2+A^2+B^2
+\frac{(\delta+2)\{A,B\}-\{A^2,B\}-\{A,B^2\}}{2}
+A (\beta-\delta)
+B (\delta-\alpha)+\Ce
\end{equation*}
a Casimir element of $\Re$, where $\Ce$ is the commutative subalgebra of $\Re$ generated by $\alpha$, $\beta$, $\gamma$, $\delta$.  The main results of this paper are as follows. 
Each of the following distinct elements is a Casimir element of $\Re$:
\begin{align*}
\Omega_A
=
D^2
+
\frac{B A C
+C A B}{2}
+ A^2
+B \gamma
-C \beta
-A \delta,
\\
\Omega_B
=
D^2
+
\frac{C B A
+A B C}{2}
+ B^2
+C \alpha 
-A \gamma
-B\delta,
\\
\Omega_C
=
D^2
+
\frac{A C B
+B C A}{2}
+ C^2 
+A \beta 
-B\alpha
-C\delta.
\end{align*}
The set $\{\Omega_A,\Omega_B,\Omega_C\}$ is invariant under a faithful $D_6$-action on $\Re$. 
Moreover we show that any Casimir element $\Omega$ is algebraically independent over $\Ce$; if ${\rm char\,}\F=0$ then the center of $\Re$ is $\Ce[\Omega]$.

\bigskip
\noindent
{\bf Keywords:} Racah algebra, Casimir elements, quadratic algebra.
 \hfil\break
\noindent {\bf 2010 Mathematics Subject Classification}. 
Primary: 81R10.  Secondary: 16S37.

\end{abstract}

\section{Introduction}\label{sec:intro}

Physicists have long been interested in the $6j$-symbols, also known as the Racah symbols, because of the important role they play in theoretical physics.  Within the physics community, it is well known that any exactly solvable problem in quantum mechanics can be studied using Lie algebras or groups.

While this approach has been generally fruitful, in some sense, it is too restrictive.  One issue with 
this approach 
is that some of the symmetry of the problem is lost.  As a result,  in the 1980s, physicists began considering the question, ``Could all the exactly solvable problems be stated in the framework of an algebra so as to preserve the idea of the dynamical symmetry method?"\cite{quadratic1991}.  It was in response to this question that Sklyanin introduced the notion of quadratic algebras in his 1982 paper on the lattice model in the theory of magnetism \cite{sklyanin}.  Recall that quadratic algebras -- algebras with quadratic commutators -- are the simplest generalization of Lie algebras which require linear commutators.

A quadratic algebra known as the Racah algebra has been of particular interest.  The Racah algebra  first appeared in the coupling problem of three angular momenta \cite{Levy1965}.  It was in this paper that a $\Z_3$-symmetric presentation for the Racah algebra first appeared.  The algebra was not referred to as the Racah algebra at this time.  Approximately twenty years later, Granovski{\u\i}, Zhedanov, and Lutsenko began working with quadratic algebras with less complicated structures than the ones originally considered by Sklyanin \cite{quadratic1991, zhedanov1988}. 
In the course of this work, Granovski{\u\i} and Zhedanov rediscovered the Racah algebra in an alternate presentation, now referred to as the standard presentation,
and gave a realization for the Racah algebra in terms of the intermediate Casimir operators of the Lie algebra $\mathfrak{su}(2)$  \cite{zhedanov1988}.  
 In \cite{LTRacah}, the Racah algebra was again rediscovered but this time in the context of Leonard triples.  In this paper the authors used the $\Z_3$-symmetric version of the Racah algebra in their classification of Leonard triples of Racah type.  
In addition to the work cited above, connections between the Racah algebra and the Askey scheme of classical orthogonal polynomials, superintegrable models, the Racah problem for the Lie algebra $\mathfrak{su}(1,1)$, and several other problems in physics  
have been explored \cite{Galbert, integrable2014-1,integrable2014-2, Racahproblem2014, R&BI2015,Kalnins1,Kalnins2, Kalnins3,Kalnins4,zhedanov1988, quadratic1991,quadratic1992, quadratic1989, hiddensymmetry1,hiddensymmetry2,hiddensymmetry3}.

In \cite{SH:2017-3}, we study a homomorphism of the Racah algebra into $U(\mathfrak{sl}_2)\otimes  U(\mathfrak{sl}_2)\otimes  U(\mathfrak{sl}_2)$ induced from the Racah problems for $\mathfrak{su}(2)$ and $\mathfrak{su}(1,1)$. 
In studying the representation theory of the Racah algebra, it has also been shown that Leonard pairs naturally arise in connection with its finite-dimensional irreducible representations \cite{LTRacah, lp&awrelation, Alnajjar2010}. A complete classification of the finite-dimensional irreducible representations of the Racah algebra is given in \cite{SH:2019-1} for the case when the underlying field is algebraically closed and has characteristic zero. In \cite{Alnajjar2010, LTRacah}, it was shown how to construct this family of representations of the Racah algebra from the representations of $U(\mathfrak{sl}_2)$. 
Extending this work, we obtain an injection of the Racah algebra into the tensor product of $U(\mathfrak{sl}_2)$ with a polynomial ring in three variables.
Most recently Genest--Vinet--Zhedanov obtained a realization for the Racah algebra from the standard realization for the Bannai--Ito algebra. Furthermore, it is shown that the Racah algebra is isomorphic to a subalgebra of the Bannai--Ito algebra \cite{H:2018-1}.

We now give a brief overview of the present paper.  Let $\F$ denote a field with ${\rm char\,}\F\not=2$. Let $\Re$ denote the unital associative $\F$-algebra  defined by generators and relations in the following way.  The generators are $A$, $B$, $C$, $D$. The relations assert that 
$$
[A,B]=[B,C]=[C,A]=2D
$$ 
and each of the elements
\begin{gather*}
\alpha=[A,D]+AC-BA,
\qquad
\beta=[B,D]+BA-CB,
\qquad
\gamma=[C,D]+CB-AC
\end{gather*}
is central in $\Re$. It follows from the above definition that the element $\delta=A+B+C$ is central in $\Re$. The algebra $\Re$ is a universal analog of the original Racah algebras and is referred to as the {\it Racah algebra} hereafter \cite{Levy1965, R&BI2015}. 
In this paper we give several alternate presentations for $\Re$ and show that $\Re$ admits a faithful $D_6$-action. Let $\Ce$ denote the commutative subalgebra of $\Re$ generated by $\alpha$, $\beta$, $\gamma$, $\delta$.  
Inspired by the work by Genest--Vinet--Zhedanov in \cite[Section 2]{integrable2014-2}, we introduce the following coset of $\Ce$ in $\Re$:
\begin{equation*}
D^2+A^2+B^2
+\frac{(\delta+2)\{A,B\}-\{A^2,B\}-\{A,B^2\}}{2}
+A (\beta-\delta)
+B (\delta-\alpha)+\Ce
\label{eq:abs}
\end{equation*}
We refer to this coset as the \emph{Casimir class} of $\Re$ and call an element of the Casimir class a \emph{Casimir element} of $\Re$.  
We show that each Casimir element is central in $\Re$ and that the Casimir class is invariant under our $D_6$-action.  

The main results of this paper are as follows. 
Each of the following distinct elements is a Casimir element of $\Re$:
\begin{align*}
\Omega_A
=
D^2
+
\frac{B A C
+C A B}{2}
+ A^2
+B \gamma
-C \beta
-A \delta,
\\
\Omega_B
=
D^2
+
\frac{C B A
+A B C}{2}
+ B^2
+C \alpha 
-A \gamma
-B\delta,
\\
\Omega_C
=
D^2
+
\frac{A C B
+B C A}{2}
+ C^2 
+A \beta 
-B\alpha
-C\delta.
\end{align*}
We show that each of $\Omega_A,\Omega_B,\Omega_C$ is algebraically independent over $\Ce$ and that the set $\{\Omega_A,\Omega_B,\Omega_C\}$ is invariant under the faithful $D_6$-action on $\Re$. 
Moreover, we show that every Casimir element 
$\Omega$ is algebraically independent over $\Ce$.  We additionally show that if ${\rm char\,}\F=0$, then the center of $\Re$ is $\Ce[\Omega]$.

The paper is organized as follows.
In Section 2, we establish some conventions used throughout the paper. 
In Section 3, we introduce the Racah algebra $\Re$ and discuss several different presentations for the algebra.
In Section 4, we give a $D_6$-action on $\Re$ and show that it is faithful.  
In Section 5, we give an $\N$-filtration of $\Re$ and use it to develop Poincar\'{e}--Birkhoff--Witt bases for $\Re$ and $\Ce$.  
In Section 6, we introduce a class of $D_6$-invariant elements of $\Re$ called the Casimir class of $\Re$.
In Section 7, we discuss the center of $\Re$ in the case that the ground field has characteristic zero. 

\section{Preliminaries}
Throughout this paper, we adopt the following conventions. Let $\F$ denote a field with ${\rm char}\,\F\neq 2$.  Let $\N$ denote the set of all nonnegative integers.   When we discuss an algebra, we mean a unital associative algebra. When we discuss a subalgebra, we assume that it has the same unit as the parent algebra.
The bracket $[\,,\,]$ stands for the commutator and the curly bracket $\{\,,\,\}$ stands for the anticommutator. 

Let $A$ denote an $\F$-algebra and let $H, K$ denote  $\F$-subspaces of $A$. By $H\cdot K$, we mean the $\F$-subspace of $A$ spanned by $hk$ for all $h\in H$ and $k\in K$. For all $n\in \N$, the notation $H^n$ stands for 
\begin{align*}
 \underbrace{H\cdot H\cdots H.}_\text{$n$ copies}
\end{align*}
For notational convenience, we define $H^0$ to be $\F 1$. 

Following \cite[p.202]{carter}, we define a filtered algebra as follows. 
We call the algebra $A$ an {\it $\N$-filtered algebra} if there exists a sequence of $\F$-subspaces 
$$
A_0\subseteq A_1\subseteq A_2\subseteq\cdots
$$
 of $A$ such that
$\bigcup_i A_i=A$ and $A_i\cdot A_j\subseteq A_{i+j}$ for all $i,j\in\N$.
In this case, we refer to the sequence $\{A_i\}_{i\in\N}$ as an $\N$-{\it filtration} of $A$.

\section{The Racah algebra $\Re$ and its presentations}

In this section, we introduce the Racah algebra $\Re$ and give three presentations of $\Re$ which will be of use  later in the paper.  

\begin{defn}\label{def:Re}
(\!\!\cite[Equations (2.8) \& (2.9)]{Levy1965}). 
Define an $\F$-algebra $\Re$ by generators and relations in the following way. The generators are $A$, $B$, $C$, $D$. The relations assert that
\begin{gather}\label{r:D}
[A,B]=[B,C]=[C,A]=2D
\end{gather}
and each of the elements 
\begin{gather*}
[A,D]+AC-BA,
\qquad
[B,D]+BA-CB,
\qquad
[C,D]+CB-AC
\end{gather*}
is central in $\Re$.  The $\F$-algebra $\Re$ is called the {\it Racah algebra}. 
\end{defn}

For notational convenience, we let 
\begin{eqnarray}
\alpha &=& [A,D]+AC-BA, \label{r:alpha} \\
\beta &=& [B,D]+BA-CB,\label{r:beta}\\
\gamma &=& [C,D]+CB-AC,\label{r:gamma}\\
\delta &=& A+B+C. \label{r:delta}
\end{eqnarray}
We now make a few comments regarding Definition \ref{def:Re}. The above presentation for $\Re$ is $\Z_3$-symmetric with respect to the generators $A$, $B$, $C$.
We also note from the following lemma that the collections of generators and relations given in Definition \ref{def:Re} are not minimal.

\begin{lem}\label{lem:alp&bet&gam&del}
\begin{enumerate}
\item $\Re$ is generated by $A$, $B$, $C$. 

\item Each of  $\alpha$, $\beta$, $\gamma$, $\delta$ is central in $\Re$.

\item The sum of $\alpha$, $\beta$, $\gamma$ is equal to zero.
\end{enumerate}
\end{lem}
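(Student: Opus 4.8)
The plan is to treat the three parts in the stated order, relying only on the defining relations \eqref{r:D} and the definitions \eqref{r:alpha}--\eqref{r:delta}, and exploiting the fact that the later parts feed off the earlier ones.

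For part (i), the key observation is that the relation $[A,B]=2D$ together with ${\rm char}\,\F\neq 2$ lets me solve for the remaining generator: $D=\tfrac12(AB-BA)$. Hence $D$ already lies in the subalgebra generated by $A$ and $B$, so $A,B,C$ generate $\Re$. For part (ii), the centrality of $\alpha,\beta,\gamma$ is immediate from Definition \ref{def:Re}, so only $\delta$ requires attention. By part (i) it suffices to verify that $\delta$ commutes with each of $A,B,C$. I would compute, for instance, $[\delta,A]=[B,A]+[C,A]=-2D+2D=0$ straight from \eqref{r:D}, and the analogous cancellations give $[\delta,B]=[\delta,C]=0$. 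Since $A,B,C$ generate $\Re$, this forces $\delta$ to be central.

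For part (iii), I would simply add the three defining expressions \eqref{r:alpha}--\eqref{r:gamma}. The commutator terms combine to $[A+B+C,D]=[\delta,D]$, which vanishes by the centrality of $\delta$ just established in part (ii), while the six quadratic terms $AC-BA$, $BA-CB$, $CB-AC$ telescope to zero. Hence $\alpha+\beta+\gamma=0$.

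The computations here are entirely routine, so there is no genuine obstacle. The only point that demands any care is the logical dependence among the parts: part (ii) invokes part (i) to reduce centrality of $\delta$ to a check on the three generators, and part (iii) invokes the centrality of $\delta$ from part (ii) to annihilate the term $[\delta,D]$. Carrying out the argument in the stated order keeps this dependence transparent.
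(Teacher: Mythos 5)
Your proposal is correct and follows essentially the same route as the paper: express $D$ via $[A,B]=2D$ for (i), reduce centrality of $\delta$ to commutation with the generators $A,B,C$ using \eqref{r:D} for (ii), and sum \eqref{r:alpha}--\eqref{r:gamma} to get $\alpha+\beta+\gamma=[\delta,D]=0$ for (iii). The sign check $[\delta,A]=[B,A]+[C,A]=-2D+2D=0$ is accurate, and the logical ordering of the parts matches the paper's.
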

\begin{proof}
By (\ref{r:D}), $D$ can be expressed in terms of $A,B$.  Assertion (i) follows from this along with Definition \ref{def:Re}.

We now show (ii).  
By Definition \ref{def:Re}, the elements $\alpha$, $\beta$, $\gamma$ are central in $\Re$. 
By (\ref{r:D}), the element $\delta$ commutes with $A$, $B$, $C$. It follows from this fact along with (i) that $\delta$ is central in  $\Re$.  So (ii) holds.

We now show (iii).
Using (\ref{r:alpha})--(\ref{r:gamma}) to evaluate the sum $\alpha+\beta+\gamma$, we find that
$$
\alpha+\beta+\gamma=[\delta,D]. 
$$
By (ii),  $[\delta,D]=0$.  So (iii) holds.
\end{proof}

We now give the first of our three alternate presentations for $\Re$.

\begin{prop}\label{prop:presentation1}
The $\F$-algebra $\Re$ has a presentation given by generators $A$, $B$, $C$, $D$, $\alpha$, $\beta$ and relations
\begin{align}
BA &= AB-2D,\label{eq:pres1-1}\\
CB &= BC-2D,\\
CA &= AC+2D,\\
DA &= AD-BA+AC-\alpha,\\
DB &= BD-CB+BA-\beta,\\
\alpha A &= A\alpha,
\qquad
\alpha B = B\alpha,
\qquad
\alpha C = C\alpha,
\\
\beta A &= A\beta,
\qquad
\beta B = B\beta,
\qquad
\beta C = C\beta.
\end{align}
\end{prop}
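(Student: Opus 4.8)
The plan is to prove the two presentations define isomorphic algebras by exhibiting mutually inverse homomorphisms. Write $\Re'$ for the $\F$-algebra presented by the generators $A,B,C,D,\alpha,\beta$ and the relations listed in the proposition. Since an algebra given by generators and relations is characterized by the universal property that a homomorphism out of it amounts to an assignment of images to the generators satisfying the relations, it suffices, in each direction, to check that the target algebra satisfies the source relations among the corresponding elements.

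First I would construct $\varphi\colon\Re'\to\Re$ sending each generator to the element of $\Re$ of the same name, where in $\Re$ the symbols $\alpha,\beta$ denote the central elements (\ref{r:alpha}) and (\ref{r:beta}). Verifying that $\varphi$ is well defined is routine: the three commutator relations hold by (\ref{r:D}); the relations giving $DA$ and $DB$ are precisely (\ref{r:alpha}) and (\ref{r:beta}) after rearrangement; and the six commuting relations for $\alpha,\beta$ follow from Lemma \ref{lem:alp&bet&gam&del}(ii).

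For the reverse map $\psi\colon\Re\to\Re'$ I would send $A,B,C,D$ to the same-named generators of $\Re'$ and check the relations of Definition \ref{def:Re}. The commutator relations (\ref{r:D}) follow from the first three relations of $\Re'$. It then remains to see that the elements (\ref{r:alpha})--(\ref{r:gamma}) are central in $\Re'$. By the relations for $DA$ and $DB$, the first two of these elements equal the generators $\alpha$ and $\beta$; each commutes with $A,B,C$ by the commuting relations, and since ${\rm char}\,\F\neq 2$ gives $D=\tfrac12(AB-BA)$ inside the subalgebra generated by $A,B$, each also commutes with $D$. Hence $\alpha,\beta$ are central in $\Re'$.

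The main obstacle is the third element $\gamma$ of (\ref{r:gamma}), which is neither a generator nor the left-hand side of any relation of $\Re'$, so its centrality must be derived. Here I would reproduce the argument of Lemma \ref{lem:alp&bet&gam&del}(iii) inside $\Re'$: the Jacobi identity applied to $[C,[A,B]]$, together with the commutator relations, yields $[A,D]+[B,D]+[C,D]=0$, whence $\alpha+\beta+\gamma=0$ and so $\gamma=-\alpha-\beta$ is a sum of central elements, hence central. This shows $\psi$ is well defined. Finally $\varphi\psi$ fixes $A,B,C,D$ and hence is the identity on $\Re$, while $\psi\varphi$ fixes $A,B,C,D$ and sends the generators $\alpha,\beta$ of $\Re'$ to $[A,D]+AC-BA$ and $[B,D]+BA-CB$, which are again $\alpha,\beta$ by the $DA$ and $DB$ relations; thus $\psi\varphi$ is the identity on $\Re'$, and $\varphi,\psi$ are mutually inverse isomorphisms.
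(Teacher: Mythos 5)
Your proposal is correct and is essentially the paper's own argument written out in full: the paper's proof simply says to use $\gamma=-\alpha-\beta$ (Lemma \ref{lem:alp&bet&gam&del}(iii)) to eliminate $\gamma$ from Definition \ref{def:Re} and reformulate, and your two mutually inverse homomorphisms, with the Jacobi-identity derivation of $\alpha+\beta+\gamma=0$ inside $\Re'$ to handle the centrality of $\gamma$, are exactly the detailed verification that this reformulation requires. No substantive difference in approach.
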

\begin{proof}
By Lemma \ref{lem:alp&bet&gam&del}(iii), $\gamma=-\alpha-\beta$.  Use this fact to eliminate $\gamma$ from the presentation of $\Re$ given in Definition \ref{def:Re}.  
Reformulate the resulting presentation to obtain the desired result.
\end{proof}

\begin{prop}\label{prop:presentation2}
The $\F$-algebra $\Re$ has a presentation given by generators $A$, $D$, $B$, $\alpha$, $\beta$, $\delta$ and relations
\begin{align}
BA&=AB-2D, \label{eq:pres2-1}
\\
DA&=AD+A\delta-A^2-2AB+2D-\alpha,\label{eq:pres2-2}
\\
BD&=DB+ B\delta-B^2-2AB+2D+\beta,\label{eq:pres2-3}
\\
\alpha A &= A\alpha, \qquad 
\beta A = A\beta, \qquad
\delta A = A\delta, 
\\
\alpha B &= B\alpha, \ \ \ \ \
\beta B = B\beta, \qquad
\delta B = B\delta, 
\\
 \alpha \delta &= \delta \alpha, 
\qquad \
 \beta \delta = \delta \beta. 
\end{align}
\end{prop}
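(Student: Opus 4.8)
The plan is to obtain this presentation from the one in Proposition \ref{prop:presentation1} by an invertible change of generating set in which the generator $C$ is traded for $\delta$. The substitution $\delta = A+B+C$ has inverse $C = \delta - A - B$, so it passes bijectively between the generating sets $\{A,B,C,D,\alpha,\beta\}$ and $\{A,D,B,\alpha,\beta,\delta\}$. To make this rigorous I would exhibit mutually inverse $\F$-algebra homomorphisms between $\Re$ and the algebra $\Re'$ defined by the presentation in the statement: the map $\Re' \to \Re$ sends each generator to the element of the same name, with $\delta \mapsto A+B+C$; the map $\Re \to \Re'$ (using the generators of Proposition \ref{prop:presentation1}) sends $C \mapsto \delta - A - B$ and fixes $A,B,D,\alpha,\beta$. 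Since $C \mapsto \delta - A - B \mapsto (A+B+C)-A-B = C$ and $\delta \mapsto A+B+C \mapsto A+B+(\delta-A-B)=\delta$, the two composites are the identity on generators, so it suffices to check that each map carries relations to relations.

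For the map $\Re' \to \Re$, I would verify that the relations of Proposition \ref{prop:presentation2} hold in $\Re$. Relation \eqref{eq:pres2-1} is already a relation of Proposition \ref{prop:presentation1}. Substituting $AC = A\delta - A^2 - AB$ and $BA = AB-2D$ into $DA = AD - BA + AC - \alpha$ and collecting terms yields \eqref{eq:pres2-2}; the symmetric computation, using $CB = B\delta - AB - B^2$, rewrites $DB = BD - CB + BA - \beta$ as \eqref{eq:pres2-3}. The relations $\delta A = A\delta$ and $\delta B = B\delta$, together with $\alpha\delta = \delta\alpha$ and $\beta\delta = \delta\beta$, are immediate from Lemma \ref{lem:alp&bet&gam&del}(ii), since $\alpha,\beta,\delta$ are central in $\Re$. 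For the reverse map I would check that the relations of Proposition \ref{prop:presentation1} hold in $\Re'$ after the substitution $C = \delta - A - B$: relation \eqref{eq:pres2-1} gives the first, the centrality of $\alpha,\beta$ relative to $C$ reduces via $C = \delta-A-B$ to their centrality relative to $A$, $B$, $\delta$, and the two relations involving $D$ transform back into \eqref{eq:pres2-2} and \eqref{eq:pres2-3} by the same algebra run in reverse.

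The only step requiring genuine care is that the two relations $CB = BC-2D$ and $CA = AC+2D$, which are \emph{absent} from the new presentation, must be shown to be consequences of what remains, so that dropping them loses no information and $\Re'$ is not a larger quotient than $\Re$. Expanding $CA - AC$ and $CB - BC$ with $C = \delta - A - B$, the $\delta$-terms cancel by $\delta A = A\delta$ and $\delta B = B\delta$, leaving $\pm(BA - AB) = \mp 2D$ by \eqref{eq:pres2-1}; this recovers both relations. This is the main obstacle, and it is one of bookkeeping rather than of concept. Once these consequences are confirmed and the two substitutions are seen to be mutually inverse on generators and to preserve the respective relation sets, the homomorphisms are mutually inverse isomorphisms, and the claimed presentation follows.
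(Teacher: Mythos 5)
Your proposal is correct and takes essentially the same route as the paper: the paper's proof is the one-line instruction to eliminate $C$ via $C=\delta-A-B$ from the presentation of Proposition \ref{prop:presentation1} and simplify using \eqref{eq:pres1-1}, which is exactly the substitution you carry out. You simply make explicit what the paper leaves implicit, namely the mutually inverse homomorphisms and the observation that the relations $CA=AC+2D$ and $CB=BC-2D$ trade places with the relations $\delta A=A\delta$ and $\delta B=B\delta$ under this substitution.
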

\begin{proof}
By \eqref{r:delta}, $C=\delta-A-B$.  Use this to eliminate $C$ from the presentation of $\Re$ given in Proposition \ref{prop:presentation1}.  Use \eqref{eq:pres1-1} to simplify the resulting relations in order to obtain the desired result.
\end{proof}

\begin{lem}\label{lem:alternate}
The following equations hold in $\Re$:
\begin{align}
D&=\frac{[A,B]}{2},
\label{r:D'}
\\
\alpha&=
\frac{[A,[A,B]]}{2}
+ A \delta-A^2
-\{A,B\},
\label{r:alpha'}
\\
\beta&=
\frac{[B,[A,B]]}{2}
-B \delta 
+B^2
+\{A,B\}.
\label{r:beta'}
\end{align}
\end{lem}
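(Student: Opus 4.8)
The plan is to derive all three identities directly from the defining relations, taking them in the order stated. The first identity \eqref{r:D'} is nothing more than a restatement of \eqref{r:D}: solving $[A,B]=2D$ for $D$ yields $D=[A,B]/2$. The payoff of recording this separately is that it converts the single brackets $[A,D]$ and $[B,D]$ occurring in the definitions \eqref{r:alpha} and \eqref{r:beta} into the nested brackets $[A,[A,B]]/2$ and $[B,[A,B]]/2$ that appear on the right-hand sides of \eqref{r:alpha'} and \eqref{r:beta'}. So after this first substitution the remaining content of each identity is purely a statement about the non-bracket terms.

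For \eqref{r:alpha'}, I would begin from $\alpha=[A,D]+AC-BA$, insert the expression for $[A,D]$ just obtained, and thereby reduce the claim to verifying $AC-BA=A\delta-A^2-\{A,B\}$. At this point I eliminate $C$ via $C=\delta-A-B$, which is immediate from \eqref{r:delta}. Expanding $AC=A\delta-A^2-AB$ and recalling that $\{A,B\}=AB+BA$ then gives the identity at once. The proof of \eqref{r:beta'} runs along the same lines: starting from $\beta=[B,D]+BA-CB$, one reduces to $BA-CB=-B\delta+B^2+\{A,B\}$ and substitutes $C=\delta-A-B$ into the product $CB$.

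There is no genuine obstacle here; the computation is routine bookkeeping. The one point that requires a word of care occurs in the $\beta$ calculation, where expanding $CB=(\delta-A-B)B$ produces the term $\delta B$ with the central factor written on the left, whereas \eqref{r:beta'} records it as $B\delta$. Reconciling the two orderings uses the centrality of $\delta$ supplied by Lemma \ref{lem:alp&bet&gam&del}(ii); without invoking centrality one would obtain the identity only up to the commutator $[\delta,B]$, which of course vanishes. Apart from this, the entire argument amounts to substitution followed by recognizing the anticommutator combinations $AB+BA$.
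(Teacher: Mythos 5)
Your proof is correct and is essentially the argument the paper gives: both amount to eliminating $D$ via $D=[A,B]/2$ and $C$ via $C=\delta-A-B$ and recognizing the anticommutator $AB+BA$. The only cosmetic difference is that you substitute directly into the defining relations \eqref{r:alpha}, \eqref{r:beta}, whereas the paper performs the same elimination by citing the already-reduced relations of Proposition \ref{prop:presentation2}.
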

\begin{proof}
Equation (\ref{r:D'}) can be obtained from  (\ref{eq:pres2-1}). 
Equation (\ref{r:alpha'}) (resp. (\ref{r:beta'})) can be obtained from \eqref{eq:pres2-2} (resp. \eqref{eq:pres2-3}) by using \eqref{r:D'} to eliminate $D$.
\end{proof}

\begin{prop}\label{prop:presentation3}
The $\F$-algebra $\Re$ has a presentation given by generators $A$, $B$, $\delta$ and relations
\begin{gather*}
[A,[A,[A,B]]]=2[A^2,B],
\\
[B,[B,[B,A]]]=2[B^2,A],
\\
[B,[A,[B,A]]]=2[A,B^2]-2[B,A^2]-2[A,B]\delta,
\\
[A,[B,[A,B]]]=2[B,A^2]-2[A,B^2]-2[B,A]\delta,
\\
\delta A=A\delta,
\qquad 
\delta B=B\delta.
\end{gather*}
\end{prop}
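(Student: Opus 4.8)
The plan is to follow the same elimination strategy used to pass from Definition \ref{def:Re} to Propositions \ref{prop:presentation1} and \ref{prop:presentation2}, now starting from the presentation in Proposition \ref{prop:presentation2} and eliminating the three generators $D$, $\alpha$, $\beta$ by means of Lemma \ref{lem:alternate}. Concretely, relation \eqref{eq:pres2-1} can be solved for $D$ to give $D=[A,B]/2$, and substituting this into \eqref{eq:pres2-2} and \eqref{eq:pres2-3} and solving for $\alpha$ and $\beta$ produces exactly the expressions \eqref{r:alpha'} and \eqref{r:beta'}. This is a sequence of Tietze transformations: each of $D$, $\alpha$, $\beta$ is expressed as a noncommutative polynomial in $A$, $B$, $\delta$, with no circularity (solve for $D$ first, then for $\alpha$ and $\beta$), so all three may be removed from the list of generators, leaving $A$, $B$, $\delta$.

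After this elimination, the only relations of Proposition \ref{prop:presentation2} that survive are the centrality relations, now read as relations among $A$, $B$, $\delta$ once $\alpha$ and $\beta$ are replaced by \eqref{r:alpha'} and \eqref{r:beta'}. The relations $\delta A=A\delta$ and $\delta B=B\delta$ pass through unchanged and become the last two relations of the proposition. The relations $\alpha\delta=\delta\alpha$ and $\beta\delta=\delta\beta$ become redundant: once $\delta$ commutes with $A$ and $B$, it commutes with every polynomial in $A$, $B$, $\delta$, in particular with the expressions defining $\alpha$ and $\beta$. So the entire remaining content is carried by the four conditions $[\alpha,A]=0$, $[\alpha,B]=0$, $[\beta,A]=0$, $[\beta,B]=0$, which should match the four nested-commutator relations of the proposition in the correspondence $[\alpha,A]\leftrightarrow[A,[A,[A,B]]]=2[A^2,B]$, $[\beta,B]\leftrightarrow[B,[B,[B,A]]]=2[B^2,A]$, and the two cross conditions with the two remaining relations.

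The heart of the argument is to verify this matching. First I would record the elementary identities $[A^2,B]=\{A,[A,B]\}$ and $[\{A,B\},A]=-[A^2,B]$, together with their $A\leftrightarrow B$ analogues, which rewrite the $A^2$- and $\{A,B\}$-contributions in \eqref{r:alpha'} and \eqref{r:beta'} as anticommutators with $[A,B]$. Since $\delta$ is central, the $\delta$-terms drop out of $[\alpha,A]$ and $[\beta,B]$ entirely, and a short computation collapses $[\alpha,A]=0$ to $[A,[A,[A,B]]]=2[A^2,B]$ and $[\beta,B]=0$ to $[B,[B,[B,A]]]=2[B^2,A]$. For the cross conditions $[\alpha,B]=0$ and $[\beta,A]=0$ the $\delta$-terms persist (they contribute $[A,B]\delta$), and the key simplification is the Jacobi identity applied to $A$, $B$, $[A,B]$, which gives $[B,[A,[A,B]]]=[A,[B,[A,B]]]$; this converts the triple commutator coming from \eqref{r:alpha'} into the form $[A,[B,[A,B]]]$ and yields $[A,[B,[A,B]]]=2[B,A^2]-2[A,B^2]-2[B,A]\delta$, while $[\beta,A]=0$ yields $[B,[A,[B,A]]]=2[A,B^2]-2[B,A^2]-2[A,B]\delta$ (the two being negatives of one another via $[A,B]=-[B,A]$, hence equivalent).

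The main obstacle I anticipate is essentially bookkeeping: keeping the signs straight through the nested commutators and ensuring that the $\delta$-, $A^2$-, $B^2$-, and $\{A,B\}$-contributions land in exactly the stated combinations. The only genuinely structural input beyond routine manipulation is the single Jacobi identity above, which is what allows $[B,[A,[A,B]]]$ to be rewritten so that the cross relations take the displayed form. Once the four centrality conditions are shown to coincide with the four nested-commutator relations, the Tietze-transformation argument delivers the desired isomorphism of presentations.
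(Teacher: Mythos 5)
Your proposal is correct and takes exactly the route of the paper's own (very terse) proof, which simply instructs the reader to use Lemma \ref{lem:alternate} to express the relations of Proposition \ref{prop:presentation2} in terms of $A$, $B$, $\delta$ --- precisely the Tietze elimination of $D$, $\alpha$, $\beta$ that you carry out. Your detailed matching of the four centrality conditions $[\alpha,A]=[\alpha,B]=[\beta,A]=[\beta,B]=0$ with the four nested-commutator relations, including the identities $[A^2,B]=\{A,[A,B]\}$ and the Jacobi identity showing the two cross relations are equivalent, checks out.
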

\begin{proof}
To obtain the result, use Lemma \ref{lem:alternate} to express the relations in Proposition \ref{prop:presentation2} in terms of $A$, $B$, $\delta$. 
\end{proof}

\section{A faithful $D_6$-action on $\Re$}\label{section:D6}

Recall the dihedral group $D_6$  with presentation given by generators $\sigma$, $\tau$ and relations 
\begin{gather*}
\sigma^2=1,
\qquad 
\tau^6=1,
\qquad 
(\sigma\tau)^2=1.
\end{gather*}
In this section, we give an action $D_6$ on the Racah algebra $\Re$ and show that this action is faithful.

\begin{prop}\label{prop:D6}
There exists a unique $D_6$-action on $\Re$ such that each of the following holds:
\begin{enumerate}
\item $\sigma$ acts on $\Re$ as an $\F$-algebra antiautomorphism in the following way:

\begin{table}[H]
\centering
\extrarowheight=3pt
\begin{tabular}{c|cccc|cccc}
$u$  &$A$ &$B$ &$C$ &$D$
&$\alpha$ &$\beta$ &$\gamma$ &$\delta$
\\

\midrule[1pt]

$\sigma(u)$ &$B$ &$A$ &$C$ &$D$
&$-\beta$ &$-\alpha$ &$-\gamma$ &$\delta$
\end{tabular}
\end{table}

\item $\tau$ acts on $\Re$ as an $\F$-algebra antiautomorphism  in the following way:

\begin{table}[H]
\centering
\extrarowheight=3pt
\begin{tabular}{c|cccc|cccc}
$u$  &$A$ &$B$ &$C$ &$D$
&$\alpha$ &$\beta$ &$\gamma$ &$\delta$
\\

\midrule[1pt]

$\tau(u)$ &$B$ &$C$ &$A$ &$-D$
&$\beta$ &$\gamma$ &$\alpha$ &$\delta$
\end{tabular}
\end{table}
\end{enumerate}
\end{prop}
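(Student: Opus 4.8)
The plan is to produce the $D_6$-action as a group homomorphism $\rho\colon D_6\to\mathcal{G}$, where $\mathcal{G}$ denotes the group (under composition) of all $\F$-algebra automorphisms and antiautomorphisms of $\Re$, sending the generators $\sigma,\tau$ of $D_6$ to the two prescribed antiautomorphisms. Working inside $\mathcal{G}$ rather than $\mathrm{Aut}(\Re)$ is essential, since both $\sigma$ and $\tau$ reverse products; this is consistent, as assigning parity $1$ to each of $\sigma,\tau$ respects the relations $\sigma^2=\tau^6=(\sigma\tau)^2=1$ after reduction in $\mathbb{Z}/2\mathbb{Z}$. Uniqueness is then immediate: by Lemma~\ref{lem:alp&bet&gam&del}(i) the algebra $\Re$ is generated by $A,B,C$, so an antiautomorphism is determined by its values on these generators; hence $\rho(\sigma),\rho(\tau)$, and therefore $\rho$ of every word in $\sigma,\tau$, are forced. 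The content of the proposition is existence, which I split into constructing $\sigma,\tau$ as antiautomorphisms and then verifying the three defining relations of $D_6$ in $\mathcal{G}$.

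For the construction I would realize $\Re$ as $F/I$, where $F=\F\langle A,B,C,D\rangle$ is the free algebra and $I$ is the two-sided ideal generated by the relators of Definition~\ref{def:Re}: the three brackets $[A,B]-2D$, $[B,C]-2D$, $[C,A]-2D$, together with the commutators expressing that $\alpha,\beta,\gamma$ (as the elements \eqref{r:alpha}--\eqref{r:gamma} of $F$) are central. The assignments in the tables, read on $A,B,C,D$, extend uniquely to antiautomorphisms $\widetilde\sigma,\widetilde\tau$ of $F$, and it suffices to show $\widetilde\sigma(I)\subseteq I$ and $\widetilde\tau(I)\subseteq I$, so that $\sigma,\tau$ descend to $\Re$. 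For the bracket relators this is direct: $\widetilde\sigma$ permutes the three among themselves, while $\widetilde\tau$ permutes them up to the overall sign forced by $D\mapsto -D$. For the centrality relators I would compute $\widetilde\sigma,\widetilde\tau$ on the free expressions $\alpha,\beta,\gamma$; these calculations give $\sigma\colon(\alpha,\beta,\gamma)\mapsto(-\beta,-\alpha,-\gamma)$ and $\tau\colon(\alpha,\beta,\gamma)\mapsto(\beta,\gamma,\alpha)$, which simultaneously confirms the last four columns of each table (the value $\delta\mapsto\delta$ being immediate from \eqref{r:delta}) and shows that each centrality relator is carried to $\pm$ a centrality relator modulo $I$, hence into $I$.

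With $\sigma,\tau\in\mathcal{G}$ in hand, the three group relations reduce to checks on $A,B,C,D$, because a composite of antiautomorphisms is again an automorphism or antiautomorphism and so is determined by its action on these generators. Explicitly, $\sigma^2$ fixes each generator; $\tau$ cycles $A\to B\to C\to A$ and sends $D\mapsto -D$, so $\tau^n(D)=(-1)^nD$ and $\tau^3$ fixes $A,B,C$, whence $\tau^6$ fixes all four; and $\sigma\tau$ fixes $A$, interchanges $B,C$, and sends $D\mapsto -D$, so $(\sigma\tau)^2$ again fixes all four. Thus $\sigma^2=\tau^6=(\sigma\tau)^2=\mathrm{id}$ in $\mathcal{G}$, and the universal property of the presentation of $D_6$ supplies the unique homomorphism $\rho$ with the prescribed values.

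The step I expect to be the main obstacle is the centrality computation for $\tau$. Whereas $\widetilde\sigma(\alpha)=-\beta$ is a term-by-term identity in $F$, the identity $\widetilde\tau(\alpha)=\beta$ holds only modulo $I$: expanding $\widetilde\tau(\alpha)$ returns $\beta$ plus a combination of bracket relators (this is precisely where $[A,B]=[B,C]$ enters), so one must perform the reduction in $F$ and invoke that $I$ is two-sided to absorb the correction terms when pushing them through the commutators $[\widetilde\tau(w),\widetilde\tau(\alpha)]$. Once this bookkeeping is in place, the remaining verifications are routine commutator manipulations.
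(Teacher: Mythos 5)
Your proposal is correct and follows essentially the same route as the paper: construct $\sigma$ and $\tau$ as antiautomorphisms by checking that the defining relations are preserved (your computations $\widetilde\sigma(\alpha)=-\beta$ exactly and $\widetilde\tau(\alpha)=\beta$ modulo the bracket relators are the right ones), then verify $\sigma^2=\tau^6=(\sigma\tau)^2=\mathrm{id}$ on the generators. The only cosmetic difference is that the paper obtains $\sigma$ by checking the relations of Proposition~\ref{prop:presentation2} and $\tau$ from the $\Z_3$-symmetry of Definition~\ref{def:Re}, whereas you verify both directly against the relator ideal of the original presentation.
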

\begin{proof}
By Proposition \ref{prop:presentation2}, there exists a unique $\F$-algebra antiautomorphism $\mathcal S$ of $\Re$ that sends 
\begin{gather*}
A\mapsto B,\qquad 
B\mapsto A,\qquad 
D\mapsto D, \qquad 
\alpha \mapsto -\beta, \qquad 
\beta \mapsto -\alpha, \qquad 
\delta\mapsto \delta.
\end{gather*}
Observe that $\mathcal S$ is of order $2$. It follows from (\ref{r:delta}) that the element $C$ is invariant under $\mathcal S$.  
 By Lemma \ref{lem:alp&bet&gam&del}(iii), the image of $\gamma$ under $\mathcal S$ is $-\gamma$. 

By the comments below Definition \ref{def:Re}, there exists a unique $\F$-algebra antiautomorphism $\mathcal T$ of $\Re$ that sends
\begin{gather*}
A \mapsto B, \qquad 
B \mapsto C, \qquad 
C \mapsto A.
\end{gather*}
It follows from (\ref{r:D})--(\ref{r:gamma}) that $\mathcal T$ sends $D$ to $-D$ and $\alpha$, $\beta$, $\gamma$ to $\beta$, $\gamma$, $\alpha$ respectively. 
It follows from (\ref{r:delta}) that the element $\delta$ is invariant under $\mathcal T$. Observe that  $\mathcal T$ is of order $6$. 

Therefore the composition $\mathcal S\circ \mathcal T$ is an $\F$-algebra automorphism of $\Re$ which sends 
\begin{gather*}
A \mapsto A, \qquad 
B \mapsto C, \qquad 
C \mapsto B, \qquad 
D \mapsto -D.
\end{gather*}
Thus the order of $\mathcal S\circ \mathcal T$  is $2$. 
We have shown that there exists a unique $D_6$-action on $\Re$ such that $\sigma$ and $\tau$ act as $\mathcal S$ and $\mathcal T$, respectively. The proposition follows.
\end{proof}

\begin{lem}\label{lem:faithful}
Let $N$ denote the cyclic subgroup of $D_6$ generated by $\tau$. If $K$ is a normal subgroup of $D_6$ with $K\cap N=\{1\}$, then $K=\{1\}$.
\end{lem}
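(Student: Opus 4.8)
The plan is to recall the subgroup structure of $D_6$ and exploit the fact that $N=\langle\tau\rangle$ is the cyclic subgroup of order $6$, which has index $2$ in $D_6$. First I would observe that since $|D_6|=12$ and $|N|=6$, any subgroup $K$ meeting $N$ trivially must satisfy $|K|=|K|\cdot|K\cap N|^{-1}\cdot|N|^{-1}\cdot|N|\le |D_6|/|N|=2$ by the product formula $|KN|=|K|\,|N|/|K\cap N|\le |D_6|$, so $|K|\in\{1,2\}$. Thus the only way to have $K\neq\{1\}$ is for $K$ to be a normal subgroup of order $2$.

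Next I would rule out the order-$2$ case using normality together with the condition $K\cap N=\{1\}$. A subgroup of order $2$ is generated by a single involution $g$, and $K=\{1,g\}$ is normal in $D_6$ precisely when $g$ is central, i.e. $g\in Z(D_6)$. Here the key structural input is that the center of $D_6$ is generated by $\tau^3$ (the unique central involution of the dihedral group of order $12$), so $Z(D_6)=\{1,\tau^3\}\subseteq N$. Therefore any normal subgroup $K$ of order $2$ must equal $\{1,\tau^3\}$, which forces $K\cap N\ni\tau^3\neq 1$, contradicting the hypothesis $K\cap N=\{1\}$.

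Combining the two steps, I conclude that $|K|=1$, i.e. $K=\{1\}$, as desired. I would present the center computation either by direct verification from the presentation $\sigma^2=\tau^6=(\sigma\tau)^2=1$ (checking that $\tau^3$ commutes with both $\sigma$ and $\tau$, and that no reflection is central), or by citing the standard fact about centers of dihedral groups of order $4k$. The main obstacle, if any, is simply the bookkeeping needed to identify $Z(D_6)$ and to verify that the only involution of $D_6$ lying outside the reflections and generating a normal subgroup is $\tau^3$; this is a short and routine finite-group computation, and no deep idea is required beyond the product formula and the location of the center inside $N$.
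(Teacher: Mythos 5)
Your proof is correct, but it takes a genuinely different route from the paper's. You argue by order counting: the product formula $|KN|=|K|\,|N|/|K\cap N|\le |D_6|$ forces $|K|\le 2$, and then a normal subgroup of order $2$ must be generated by a central involution, so $K\subseteq Z(D_6)=\{1,\tau^3\}\subseteq N$, contradicting $K\cap N=\{1\}$ unless $K$ is trivial. The paper instead works element by element with the presentation: it notes that $\{1,\sigma\}$ is a set of coset representatives for $N$, supposes some reflection $\sigma\tau^i$ lies in $K$, uses normality to place $\tau^i\sigma$ and hence $\tau^{2i}$ in $K$, deduces $i\in\{0,3\}$ from $K\cap N=\{1\}$, and then conjugates by $\tau$ to rule out both remaining candidates $\sigma$ and $\sigma\tau^3$. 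Your approach is shorter and more conceptual, at the cost of invoking two standard facts (the product formula for subgroups and the identification $Z(D_6)=\{1,\tau^3\}$ for the dihedral group of order $12$); the paper's argument is longer but entirely self-contained, using only the defining relations $\sigma^2=\tau^6=(\sigma\tau)^2=1$. Both are complete proofs; if you present yours, you should carry out (or at least cleanly cite) the verification that $\tau^3$ is the unique nonidentity central element, as you indicate.
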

\begin{proof}
Observe that $\{1, \sigma\}$ is a system of left coset representatives for $N$ in $D_6$. Thus it suffices to show that 
$K\cap \sigma N=\emptyset$.
Suppose $K\cap \sigma N\neq\emptyset$.  Then there exists an integer $0\leq i\leq 5$ such that $\sigma\tau^i\in K$. 
By the normality of $K$, we have $
\tau^{i}\sigma=\sigma^{-1}\cdot \sigma\tau^i \cdot \sigma\in K$. 
Hence $
\tau^{2i}=\tau^{i}\sigma\cdot \sigma\tau^i \in K$. 
Since $K\cap N=\{1\}$ and $\tau$ is of order $6$, it follows that $i=0$ or $i=3$. This shows that 
\begin{gather}\label{e:rhotau3}
K\cap \sigma N\subseteq\{\sigma,\sigma\tau^3\}.
\end{gather}
Suppose that $\sigma\in K\cap \sigma N$. 
By the normality of $K$, 
$$
\sigma\tau^4=\tau\cdot \sigma\cdot \tau^{-1}\in K,
$$ 
which contradicts (\ref{e:rhotau3}). Hence $\sigma\not\in K\cap\sigma N$. By a similar argument, we have $\sigma\tau^3\not\in K\cap \sigma N$. So we see that 
$K\cap \sigma N=\emptyset$
and the result follows.
\end{proof}

\begin{prop}
With reference to Proposition \ref{prop:D6},  
the $D_6$-action on $\Re$ is faithful.
\end{prop}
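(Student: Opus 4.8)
The plan is to show the kernel of the $D_6$-action is trivial by using Lemma~\ref{lem:faithful}, which reduces the problem to understanding the action of the cyclic subgroup $N=\langle\tau\rangle$. Since the kernel $K$ of any group action is automatically a normal subgroup, Lemma~\ref{lem:faithful} tells me that it suffices to prove $K\cap N=\{1\}$; that is, I only need to show that $\tau^i$ acts nontrivially on $\Re$ for each $i\in\{1,2,3,4,5\}$. This is a substantial simplification because I need not separately track the six antiautomorphisms of the form $\sigma\tau^i$.

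First I would record the explicit action of the powers of $\tau$ on the generators. From Proposition~\ref{prop:D6}(ii), $\tau$ cyclically permutes $A\mapsto B\mapsto C\mapsto A$ and sends $D\mapsto -D$. Consequently $\tau^2$ fixes $D$ and cyclically permutes $A\mapsto C\mapsto B\mapsto A$, while $\tau^3$ fixes each of $A$, $B$, $C$ but sends $D\mapsto -D$; the remaining powers are determined from these. To detect nontriviality, I would evaluate each power on a single well-chosen generator and observe that the image differs from the original element. For $\tau$, $\tau^2$, $\tau^4$, $\tau^5$ the permutation action on $\{A,B,C\}$ is already nontrivial, so it suffices to note that $A$, $B$, $C$ are distinct elements of $\Re$, hence each such power moves $A$. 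The only power that fixes all three of $A$, $B$, $C$ is $\tau^3$, which sends $D\mapsto -D$.

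The main obstacle, therefore, is the element $\tau^3$: I must verify that $D\neq -D$ in $\Re$, equivalently that $2D\neq 0$, equivalently (since ${\rm char\,}\F\neq 2$) that $D\neq 0$. This is exactly the kind of fact that requires knowing $\Re$ is ``large enough'' and does not collapse under its defining relations; a purely formal manipulation of the relations will not settle it. I would resolve this by exhibiting a concrete representation of $\Re$ in which the image of $D$ is nonzero, or equivalently by invoking a Poincar\'e--Birkhoff--Witt--type basis for $\Re$ in which $D$ appears as a nonzero basis element. (The excerpt promises such PBW bases in Section~6, so I may assume a faithful linear structure on $\Re$ that distinguishes $D$ from $0$.) Once $D\neq 0$ is established, $\tau^3$ acts nontrivially, so $K\cap N=\{1\}$, and Lemma~\ref{lem:faithful} yields $K=\{1\}$.

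In summary, the argument proceeds in three steps: reduce to the cyclic subgroup via Lemma~\ref{lem:faithful}; dispose of $\tau,\tau^2,\tau^4,\tau^5$ using the distinctness of $A$, $B$, $C$ under their nontrivial permutations; and dispose of $\tau^3$ using $D\neq 0$, which is the one genuinely nonformal input and the crux of the proof. I expect the write-up to be short, with the verification that $A$, $B$, $C$, $D$ are genuinely distinct nonzero elements being the step that must be justified by appeal to the structure theory of $\Re$ rather than by relation-chasing alone.
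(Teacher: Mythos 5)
Your proposal is correct and follows essentially the same route as the paper: the paper's proof also invokes Lemma~\ref{lem:faithful} together with the fact that the antiautomorphism $\mathcal T$ realizing $\tau$ has order exactly $6$. The only difference is one of explicitness --- the paper simply asserts ``$\mathcal T$ is of order $6$'' in the proof of Proposition~\ref{prop:D6}, whereas you correctly identify that this assertion ultimately rests on $A$, $B$, $C$ being distinct and $D\neq 0$, which is justified by the Poincar\'e--Birkhoff--Witt basis of Theorem~\ref{thm:basisURA} (whose proof does not depend on Section~4, so there is no circularity).
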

\begin{proof}
Recall from the proof of Proposition \ref{prop:D6} that the antiautomorphism $\mathcal T:\Re\to \Re$ is of order $6$.  The result follows from this and  Lemma \ref{lem:faithful}.
\end{proof}

\section{An $\N$-filtration of $\Re$}

In this section, we give a Poincar\'{e}--Birkhoff--Witt basis for $\Re$.  Using this basis, we obtain 
an $\N$-filtration for $\Re$ and thus show that $\Re$ is a filtered algebra. 
We begin with the following Poincar\'{e}--Birkhoff--Witt basis for $\Re$:

\begin{thm}\label{thm:basisURA}
The elements
\begin{gather}
A^i D^j B^k \alpha^r \delta^s \beta^t 
\qquad \quad
\hbox{for all $i,j,k,r,s,t\in \N$}\label{eq:basisURA}
\end{gather}
form an $\F$-basis of $\Re$.
\end{thm}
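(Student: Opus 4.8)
The plan is to prove that the monomials $A^i D^j B^k \alpha^r \delta^s \beta^t$ both span $\Re$ and are linearly independent. I would split the argument into a spanning step and a linear-independence step, since these typically require quite different techniques.

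\textbf{Spanning.} First I would use Proposition \ref{prop:presentation2}, which presents $\Re$ on the generators $A$, $D$, $B$, $\alpha$, $\beta$, $\delta$. The defining relations there are exactly a set of rewriting rules: equation \eqref{eq:pres2-1} lets me move any $B$ that sits immediately to the left of an $A$ past it (at the cost of a lower term $2D$); equation \eqref{eq:pres2-2} rewrites $DA$ as $AD$ plus lower-degree terms; equation \eqref{eq:pres2-3} rewrites $BD$ as $DB$ plus lower-degree terms; and the remaining relations assert that $\alpha$, $\beta$, $\delta$ are central. Reading these as an ordering $A < D < B < \alpha < \delta < \beta$, every relation expresses an out-of-order product as an in-order product plus products that are strictly smaller in a suitable sense. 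I would argue that repeatedly applying these rules to an arbitrary word in the generators terminates in an $\F$-linear combination of the claimed monomials, so these monomials span $\Re$. The termination/well-ordering bookkeeping is routine once the right monomial order is fixed.

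\textbf{Linear independence.} This is where I expect the real work to lie, and it is the main obstacle: spanning alone does not rule out hidden collapses among the monomials. The cleanest route is to exhibit a concrete $\F$-algebra in which the images of $A^i D^j B^k \alpha^r \delta^s \beta^t$ are visibly linearly independent. A natural candidate is a representation of $\Re$ on a polynomial (or polynomial-like) algebra where $\alpha$, $\beta$, $\delta$ act as independent indeterminates and $A$, $D$, $B$ act as operators whose ordered monomials are independent — for instance, building a module on which $A$ acts as multiplication by a variable, $D$ and $B$ act as suitably chosen differential/shift operators, and $\alpha$, $\beta$, $\delta$ act as scalars ranging over free parameters. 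One then checks that a nontrivial dependence among the monomials would force a polynomial identity that cannot hold, forcing all coefficients to vanish. Alternatively, I could set up an associated graded argument: define a candidate PBW-type basis on the free object, verify via the relations that the rewriting system is confluent (checking that the finitely many overlap ambiguities among the relations resolve to the same normal form, e.g. via the Diamond Lemma), and conclude independence abstractly.

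\textbf{Assembly.} Given both halves, the monomials form a basis. I would most likely favor the Diamond Lemma packaging, because it proves spanning and independence simultaneously: once I verify that the rewriting rules from Proposition \ref{prop:presentation2} have no unresolvable ambiguities, the irreducible words — which are precisely the claimed monomials — automatically form an $\F$-basis. The crux is then a finite, if tedious, confluence check on the overlaps (for example, the overlap of the $DA$-rule with the $BA$-rule inside a word $BDA$, and similar triple overlaps), and confirming that the lower-order terms produced by \eqref{eq:pres2-1}–\eqref{eq:pres2-3} stay within the span of irreducible monomials. I expect that confluence verification, rather than any conceptual difficulty, to be the bulk of the proof.
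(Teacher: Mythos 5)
Your proposal, in the form you ultimately settle on (the Diamond Lemma applied to the rewriting system coming from Proposition \ref{prop:presentation2}, augmented by the centrality of $\alpha$, $\beta$, $\delta$, with the single nontrivial overlap $BDA$ to resolve), is exactly the paper's proof. The only remaining work is the explicit computation showing that the two reductions of $BDA$ agree, which the paper carries out and which you correctly identify as the crux.
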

\begin{proof}
To show this we invoke the Diamond Lemma \cite[Theorem 1.2]{bergman}.
Let $M$ denote the free monoid on the alphabet set $S=\{A,B,D,\alpha,\beta,\delta\}$. 
Let $\ell:M\to \N$ denote the length function of $M$. 
Given an element $w=s_1s_2\cdots s_n\in M$ with $s_1,s_2,\ldots,s_n\in S$, 
we call an operation on $w$ an {\it elementary operation} if it consists of performing one of the following actions on $w$:
\begin{enumerate}
\item[$\bullet$]  
Interchange $s_i$ and $s_j$, where $1\leq i<j\leq n$ and the position of $s_j$ is to the left of the position of $s_i$ in the list
$$
A,\quad D,\quad B,\quad 
\alpha, \quad \delta, \quad \beta.
$$

\item[$\bullet$] 
Choose an integer $i$ such that $1\leq i\leq n$ and $s_i\in \{A,B,D\}$, then replace $s_i$ by the left neighbor of $s_i$ in the list 
$$
\delta,
\quad
A,
\quad
B,
\quad
D.
$$
\end{enumerate}
We define a binary relation $\preceq$ on $M$ as follows. For any $u,w\in M$, we say that 
$u\rightarrow w$ whenever $\ell(u)<\ell(w)$ or $u$ is obtained from $w$ by an elementary operation. 
We say that $u\preceq w$ if there exist $u_0,u_1,\ldots,u_k\in M$, with $k\in\N$, 
 such that 
\begin{gather*}
u=u_0\rightarrow u_1\rightarrow \cdots \rightarrow u_{k-1}\rightarrow u_k=w.
\end{gather*}
By construction, $\preceq$ is a monoid partial order on $M$  and satisfies the descending chain condition \cite[p.181]{bergman}.  
By Lemma \ref{lem:alp&bet&gam&del}(ii), we may extend the relations given in Proposition \ref{prop:presentation2}  as follows:
\begin{align*} 
BA &= AB-2D,
\\
DA &= AD+A\delta-A^2-2AB+2D-\alpha,
\\
BD &= DB+B\delta-B^2-2AB+2D+\beta,
\\
\alpha A&=A\alpha, 
\qquad 
\beta A=A\beta, 
\qquad 
\delta A=A\delta,
\\
\alpha B&=B\alpha,
\qquad 
\beta B=B\beta,
\qquad 
\delta B=B\delta,
\\
\alpha D&=D\alpha,
\qquad 
\beta D=D\beta,
\qquad 
\delta D=D\delta,
\\
\beta\alpha &=\alpha\beta,
\qquad 
\delta\alpha=\alpha\delta,
\qquad 
\beta\delta=\delta\beta.
\end{align*}
We regard the above relations as a reduction system. The irreducible elements under the reduction system are exactly the $\F$-linear combinations of the words in (\ref{eq:basisURA}). 
By construction, $\preceq$ is compatible with the reduction system \cite[p.181]{bergman}. 

We now resolve any ambiguities arising from the reduction system.  There are two potential kinds of ambiguities: inclusion ambiguities and overlap ambiguities \cite[p.181]{bergman}. There are no inclusion ambiguities in the above reduction system.  There is one nontrivial overlap ambiguity.  Observe that the word $BDA$ can be reduced in two different ways.
We could eliminate $BD$ first or eliminate $DA$ first.  Either way, one finds that
\begin{align*}
BDA&=ADB-2D^2-3A^2B-3AB^2+6AD+6DB+2AB\delta-2D\delta\\
&\qquad-2A^2-2B^2-8AB+A\beta+2A\delta-B\alpha+2B\delta+8D-2\alpha+2\beta.
\end{align*}
Therefore the overlap ambiguity $BDA$ is resolvable and so every ambiguity is resolvable.  The theorem follows from these comments along with the Diamond Lemma \cite[p.181]{bergman}.
\end{proof}

\begin{lem}\label{lem:Cebasis}
The elements 
\begin{gather}\label{e:Cebasis}
\alpha^r\delta^s\beta^t 
\qquad 
\hbox{for all $r,s,t\in \N$}
\end{gather}
form an $\F$-basis for $\Ce$.
\end{lem}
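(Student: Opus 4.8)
The plan is to deduce both halves of the basis claim from results already in hand, with Theorem~\ref{thm:basisURA} doing essentially all of the work. The two things to check are that the monomials $\alpha^r\delta^s\beta^t$ \emph{span} $\Ce$ and that they are \emph{$\F$-linearly independent}, and these are handled by two quite different observations.

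For spanning, I would first record that $\Ce$ is commutative: by Lemma~\ref{lem:alp&bet&gam&del}(ii) each of $\alpha$, $\beta$, $\gamma$, $\delta$ is central in $\Re$, so in particular these four generators commute with one another, and hence the $\F$-subalgebra $\Ce$ they generate is a commutative algebra. Next I would eliminate $\gamma$ as a generator: by Lemma~\ref{lem:alp&bet&gam&del}(iii) we have $\gamma=-\alpha-\beta$, so $\Ce$ is already generated by $\alpha$, $\beta$, $\delta$ alone. Since these three generators pairwise commute, every element of $\Ce$ is an $\F$-linear combination of products of $\alpha$, $\beta$, $\delta$, and each such product can be reordered into the canonical form $\alpha^r\delta^s\beta^t$. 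This shows that the elements in \eqref{e:Cebasis} span $\Ce$.

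For linear independence, the key leverage is that the ordering $\alpha,\delta,\beta$ chosen in \eqref{e:Cebasis} is exactly the tail ordering appearing in the Poincar\'e--Birkhoff--Witt basis of Theorem~\ref{thm:basisURA}. Specializing the basis elements $A^iD^jB^k\alpha^r\delta^s\beta^t$ to the case $i=j=k=0$, I obtain precisely the family $\alpha^r\delta^s\beta^t$ with $r,s,t\in\N$ as a subset of an $\F$-basis of $\Re$. A subset of a basis is linearly independent, so the elements in \eqref{e:Cebasis} are $\F$-linearly independent. Combining this with the spanning statement of the previous paragraph yields that they form an $\F$-basis for $\Ce$.

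I do not expect a genuine obstacle here, since the hard content is the Diamond-Lemma computation already carried out in Theorem~\ref{thm:basisURA}; the only point requiring care is to confirm that the generating set of $\Ce$ truly collapses to $\{\alpha,\delta,\beta\}$ (via $\gamma=-\alpha-\beta$) and that commutativity permits rewriting every monomial in the canonical order matching the PBW tail, so that the spanning set coincides exactly with the independent subset extracted from Theorem~\ref{thm:basisURA}.
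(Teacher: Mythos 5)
Your proposal is correct and follows the paper's own argument exactly: the paper likewise uses Lemma~\ref{lem:alp&bet&gam&del}(iii) to reduce the generating set of the commutative subalgebra $\Ce$ to $\{\alpha,\beta,\delta\}$, obtaining the spanning claim, and then cites Theorem~\ref{thm:basisURA} for linear independence since the monomials $\alpha^r\delta^s\beta^t$ are the $i=j=k=0$ subset of that PBW basis. No gaps.
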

\begin{proof}
Recall that $\Ce$ is defined to be the commutative $\F$-subalgebra of $\Re$ generated by $\alpha$, $\beta$, $\gamma$, $\delta$. 
By Lemma \ref{lem:alp&bet&gam&del}(iii), $\Ce$ is generated by $\alpha$, $\beta$, $\delta$. Therefore $\Ce$ is spanned by (\ref{e:Cebasis}) as an $\F$-vector space.  By Theorem  \ref{thm:basisURA}, these elements (\ref{e:Cebasis}) are linearly independent over $\F$.
The result follows. 
\end{proof}

\begin{defn}\label{def:Rn}
For each $n\in \N$, define $\Re_n$ to be the $\F$-subspace of $\Re$ spanned by
\begin{gather*}
A^i D^j B^k \alpha^r \delta^s \beta^t
\qquad \quad
\hbox{for all $i,j,k,r,s,t\in \N$ with $i+2 j+k+r+s+t\leq n$}.
\end{gather*}
Note that $\Re_0=\F 1$. For notational convenience, we define $\Re_{-1}$ to be the zero subspace of $\Re$.
\end{defn}

The following result is an immediate consequence of Theorem \ref{thm:basisURA} and Definition \ref{def:Rn}.
\begin{lem}\label{lem:R1basis}
The elements
\begin{gather*}
A,
\quad 
B,
\quad 
\alpha,
\quad
\beta,
\quad 
\delta,
\quad
1
\end{gather*}
form an $\F$-basis for the vector space $\Re_1$.
\end{lem}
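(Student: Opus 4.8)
The plan is to prove the lemma directly from Definition \ref{def:Rn} in the case $n=1$, together with the Poincar\'e--Birkhoff--Witt basis established in Theorem \ref{thm:basisURA}. First I would unwind the definition: by Definition \ref{def:Rn}, $\Re_1$ is the $\F$-span of the monomials $A^i D^j B^k \alpha^r \delta^s \beta^t$ subject to the constraint $i+2j+k+r+s+t\leq 1$ with all exponents in $\N$. The key observation is that the coefficient $2$ on $j$ forces $j=0$, since $j\geq 1$ already gives $2j\geq 2>1$; in particular $D$ itself does not contribute to $\Re_1$.

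With $j=0$, the remaining constraint reads $i+k+r+s+t\leq 1$. Enumerating the nonnegative-integer solutions, the sum is either $0$, which yields the single monomial $1$, or exactly $1$, in which case precisely one of the five exponents equals $1$ and the rest vanish, yielding $A$, $B$, $\alpha$, $\delta$, $\beta$. Thus the six listed elements span $\Re_1$ by construction.

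It then remains only to confirm linear independence. Here I would note that each of $A$, $B$, $\alpha$, $\beta$, $\delta$, $1$ occurs among the basis monomials $A^i D^j B^k \alpha^r \delta^s \beta^t$ of Theorem \ref{thm:basisURA}, so their $\F$-linear independence is inherited directly from that theorem. Combining the spanning statement of the previous paragraph with this independence yields the result.

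I do not anticipate any genuine obstacle: as the surrounding text indicates, the lemma is an immediate bookkeeping consequence of the PBW basis. The only point requiring a moment's care is the enumeration of exponent tuples, in particular remembering that $D$ carries weight $2$ in the filtration degree so that $D\notin\Re_1$, and checking that the list of monomials of weight at most $1$ is complete.
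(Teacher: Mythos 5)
Your proposal is correct and follows exactly the route the paper intends: the paper states this lemma as an immediate consequence of Theorem \ref{thm:basisURA} and Definition \ref{def:Rn}, and your argument simply spells out that deduction (the weight $2$ on $j$ excluding $D$, the enumeration of exponent tuples, and independence inherited from the PBW basis).
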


\begin{lem}\label{lem:C=0}
Each of 
\begin{gather*}
A,
\quad 
B,
\quad 
C,
\quad 
\alpha,
\quad
\beta,
\quad 
\gamma,
\quad 
\delta,
\quad
1
\end{gather*}
is in $\Re_1$.
\end{lem}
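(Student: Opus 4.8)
The plan is to reduce everything to the explicit basis already identified in Lemma \ref{lem:R1basis}, using only the two linear dependencies among the generators established earlier. First I would observe that $\Re_1$ is by construction (Definition \ref{def:Rn}) an $\F$-subspace of $\Re$, hence closed under $\F$-linear combinations; this is the only structural property of $\Re_1$ we need. By Lemma \ref{lem:R1basis}, the six elements $A,B,\alpha,\beta,\delta,1$ lie in $\Re_1$ outright, since they form an $\F$-basis of it. So the content of the statement is entirely concentrated in the two remaining elements, $C$ and $\gamma$.

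Next I would dispatch $C$. By \eqref{r:delta} we have $\delta=A+B+C$, and rearranging gives $C=\delta-A-B$. Each of $\delta$, $A$, $B$ belongs to $\Re_1$ by the previous paragraph, so their $\F$-linear combination $\delta-A-B$ does too; hence $C\in\Re_1$. The element $\gamma$ is handled in exactly the same spirit: Lemma \ref{lem:alp&bet&gam&del}(iii) asserts $\alpha+\beta+\gamma=0$, so $\gamma=-\alpha-\beta$, and since $\alpha,\beta\in\Re_1$ we again conclude $\gamma\in\Re_1$ by closure under linear combinations.

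Assembling these observations proves that all eight listed elements lie in $\Re_1$. I expect no genuine obstacle here: the result is a routine consequence of the filtration's definition together with the two relations $C=\delta-A-B$ and $\gamma=-\alpha-\beta$, which were already available. The only point worth flagging is that one should cite the $\F$-subspace structure of $\Re_1$ explicitly, since it is precisely this closure that upgrades the two relations from statements about generators to membership claims in $\Re_1$. The lemma functions mainly as a convenient bookkeeping step, recording that $C$ and $\gamma$ sit in the degree-one part of the filtration alongside the chosen basis generators, which will presumably be used when tracking filtration degrees of the Casimir elements in the sequel.
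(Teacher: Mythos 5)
Your proof is correct and follows exactly the paper's (one-line) argument: the paper's proof simply cites \eqref{r:delta}, Lemma \ref{lem:alp&bet&gam&del}(iii), and Lemma \ref{lem:R1basis}, which is precisely the reduction you carry out in detail.
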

\begin{proof}
Use (\ref{r:delta}), Lemma \ref{lem:alp&bet&gam&del}(iii), and Lemma \ref{lem:R1basis}. 
\end{proof}

\begin{lem}\label{lem:D=0}
The following equations hold in $\Re$:
\begin{enumerate}

\item $D=0 \pmod{\Re_1^2}$.

\item 
$DA=AD \pmod{\Re_1^2}$.

\item 
$DB =BD \pmod{\Re_1^2}$.
\end{enumerate}
\end{lem}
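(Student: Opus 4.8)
The plan is to prove all three parts by reducing each expression to an explicit relation already available in the excerpt and then checking, term by term, that the result lands in $\Re_1^2 = \Re_1\cdot\Re_1$. The key preliminary observation, which I would record first, is that $1\in\Re_1$ by Lemma~\ref{lem:R1basis}, so $\Re_1=\Re_1\cdot 1\subseteq\Re_1\cdot\Re_1=\Re_1^2$. In particular each of $A$, $B$, $\alpha$, $\beta$, $\delta$ belongs to $\Re_1^2$, and for any $h,k\in\Re_1$ the product $hk$ lies in $\Re_1^2$ by the very definition of $\Re_1^2$. These two facts are all the structural input the argument needs.

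For part (i), I would invoke Lemma~\ref{lem:alternate}, specifically \eqref{r:D'}, which gives $D=\tfrac{1}{2}[A,B]=\tfrac{1}{2}(AB-BA)$. Since $A,B\in\Re_1$, both $AB$ and $BA$ lie in $\Re_1^2$, whence $D\in\Re_1^2$; this is exactly the assertion $D\equiv 0\pmod{\Re_1^2}$. It is worth doing this part first, since the element $D$ reappears on the right-hand sides of the relations used in (ii) and (iii).

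For parts (ii) and (iii), I would use the relations \eqref{eq:pres2-2} and \eqref{eq:pres2-3} from Proposition~\ref{prop:presentation2}, which rearrange to
$$
DA-AD=A\delta-A^2-2AB+2D-\alpha,\qquad BD-DB=B\delta-B^2-2AB+2D+\beta.
$$
I would then check that each summand on the right lies in $\Re_1^2$: the terms $A\delta$, $A^2$, $AB$, $B\delta$, $B^2$ are each products of two elements of $\Re_1$; the terms $\alpha$ and $\beta$ lie in $\Re_1\subseteq\Re_1^2$; and $D\in\Re_1^2$ by part (i). Hence $DA-AD\in\Re_1^2$ and $BD-DB\in\Re_1^2$, which are precisely the congruences in (ii) and (iii).

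I do not anticipate a genuine obstacle here; the proof is essentially bookkeeping against the filtration. The only two points that require care are the inclusion $\Re_1\subseteq\Re_1^2$ (which is what lets the degree-one terms $\alpha$, $\beta$ and the unit contribute to $\Re_1^2$) and the ordering of the parts, since part (i) must precede (ii) and (iii) to handle the $2D$ appearing in the commutator expressions.
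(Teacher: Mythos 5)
Your proof is correct and takes essentially the same approach as the paper, which simply evaluates the defining relations \eqref{r:D}--\eqref{r:beta} modulo $\Re_1^2$; you use the equivalent relations from Lemma~\ref{lem:alternate} and Proposition~\ref{prop:presentation2} instead, but the bookkeeping (every term is a product of at most two elements of $\Re_1$, together with $\Re_1\subseteq\Re_1^2$ since $1\in\Re_1$) is identical.
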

\begin{proof}
Use  Lemma \ref{lem:R1basis} to evaluate each of (\ref{r:D})--(\ref{r:beta}) modulo $\Re_1^2$.
\end{proof}

\begin{lem}\label{lem:ADBA}
For all $i,j,k\in \N$, the following equations hold in $\Re$:
\begin{gather*}
A^i D^j B^k A
=\left\{
\begin{array}{ll}
A^{i+1} D^{j} 
\pmod{\Re_1^{i+2j}} 
\qquad 
&\hbox{if $k=0$},
\\
A^{i+1} D^{j} B^k
-2k  A^{i} D^{j+1} B^{k-1}
\pmod{\Re_1^{i+2j+k}}
\qquad 
&\hbox{if $k\not=0$}.
\end{array}
\right.
\end{gather*}
\end{lem}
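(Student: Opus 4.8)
The plan is to prove the identity
\begin{align*}
A^i D^j B^k A
=\left\{
\begin{array}{ll}
A^{i+1} D^{j}
\pmod{\Re_1^{i+2j}}
\qquad
&\hbox{if $k=0$},
\\
A^{i+1} D^{j} B^k
-2k  A^{i} D^{j+1} B^{k-1}
\pmod{\Re_1^{i+2j+k}}
\qquad
&\hbox{if $k\not=0$}
\end{array}
\right.
\end{align*}
by first reducing to the task of commuting the single trailing $A$ leftward past the block $B^k$, and then past $D^j$. The key structural fact I would exploit is \emph{Lemma \ref{lem:D=0}}: modulo $\Re_1^2$ we have $D\equiv 0$, $DA\equiv AD$, and $DB\equiv BD$. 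Because $D\in\Re_1^2$, every factor of $D$ contributes weight $2$, and multiplying a monomial that already lies in $\Re_1^m$ by $D$ lands it in $\Re_1^{m+2}$; this is why the filtration degree of $A^iD^jB^k$ is $i+2j+k$ and why the claimed error terms are genuinely absorbed.

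First I would handle the case $k=0$. Here I must move $A$ past $D^j$. Using $DA=AD \pmod{\Re_1^2}$ repeatedly (Lemma \ref{lem:D=0}(ii)), each transposition of $A$ with a $D$ incurs only a correction lying in $A^iD^{j-1}\cdot\Re_1^2\cdot(\text{lower})$, which has total weight at most $i+2(j-1)+2=i+2j$, hence lies in $\Re_1^{i+2j}$. Iterating $j$ times gives $A^iD^jA\equiv A^{i+1}D^j\pmod{\Re_1^{i+2j}}$, and left-multiplying by $A^i$ (which raises the filtration index by $i$) yields the stated congruence. For the case $k\neq0$, I would proceed in two stages: first commute $A$ leftward through $B^k$, then through $D^j$. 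The relation $BA=AB-2D$ from \eqref{eq:pres2-1} is the source of the factor $-2k$: moving $A$ past each of the $k$ copies of $B$ produces one ``straight'' term $A^iD^jB^kA\to A^iD^jB^{k-1}(AB-2D)$ and, summing the $k$ choices of which $B$ is converted to a $-2D$, the leading correction is $-2k\,A^iD^jB^{k-1}D$. Then, using Lemma \ref{lem:D=0}(iii) to slide that trailing $D$ leftward past $B^{k-1}$ up to an error in $\Re_1^{i+2j+k}$, this correction becomes $-2k\,A^iD^{j+1}B^{k-1}$, while the straightened main term becomes $A^{i+1}D^jB^k$ after moving $A$ through $D^j$ as in the $k=0$ case.

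The bookkeeping I expect to be the main obstacle is verifying that \emph{all} the secondary correction terms — those arising when two $B$'s are simultaneously converted to $D$'s, those from the commutators $DA\equiv AD$ and $DB\equiv BD$ being only congruences rather than identities, and those from re-collecting a stray $D$ past the remaining $B$'s — are uniformly trapped in $\Re_1^{i+2j+k}$. The clean way to control this is to argue by the total filtration weight: any monomial obtained from $A^iD^jB^kA$ by one application of a defining relation either preserves the weight $i+2j+k+1$ (these are the tracked terms $A^{i+1}D^jB^k$ and $-2k\,A^iD^{j+1}B^{k-1}$, which have weight $(i+1)+2j+k=i+2(j+1)+(k-1)=i+2j+k+1$) or strictly lowers it, and anything of weight $\le i+2j+k$ lies in $\Re_1^{i+2j+k}$ by Definition \ref{def:Rn}. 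I would make this precise by a double induction, inducting on $k$ (using $BA=AB-2D$ to peel off one $B$ at a time) and, in the base layer $k=0$, on $j$ (using $DA\equiv AD$), with Lemma \ref{lem:R1basis} and Lemma \ref{lem:C=0} guaranteeing that the generators and their weight-one combinations already sit in $\Re_1$ so that products of them land in the appropriate power of $\Re_1$.
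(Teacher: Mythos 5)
Your proposal is correct and follows essentially the same route as the paper's proof: induction on $k$, peeling off one $B$ at a time via $BA=AB-2D$ to produce the $-2k$ correction, sliding the resulting trailing $D$ leftward with Lemma \ref{lem:D=0}(iii), and handling the base case $k=0$ with Lemma \ref{lem:D=0}(ii); the weight bookkeeping you describe is exactly how the error terms are absorbed into $\Re_1^{i+2j+k}$ in the paper.
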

\begin{proof}
We proceed by induction on $k$.  The case when $k=0$ is immediate from Lemma \ref{lem:D=0}(ii). 
Suppose that $k\geq 1$.
By (\ref{r:D}), the term $A^i D^j B^{k} A$ is equal to
\begin{gather*}
A^i D^j B^{k-1} A B-2A^i D^j B^{k-1} D.
\end{gather*}
By the inductive hypothesis, we have 
$$
A^i D^j B^{k-1} A B=A^{i+1} D^j B^{k}-2(k-1) A^{i} D^{j+1} B^{k-1}
\pmod{\Re_1^{i+2j+k}}.
$$
By Lemma \ref{lem:D=0}(iii), we have 
$$A^i D^j B^{k-1} D=
A^i D^{j+1} B^{k-1} 
\pmod{\Re_1^{i+2j+k}}.
$$  
Combining the above comments, 
we see that 
$$A^i D^j B^{k} A=A^{i+1} D^j B^{k}-2k A^{i} D^{j+1} B^{k-1}
\pmod{\Re_1^{i+2j+k}}.$$
The result follows.
\end{proof}

\begin{lem}\label{lem:Rn=R1^n}
We have $\Re_n=\Re_1^n$ for all $n\in \N$.  
\end{lem}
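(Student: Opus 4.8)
The plan is to establish the two inclusions $\Re_n \subseteq \Re_1^n$ and $\Re_1^n \subseteq \Re_n$ separately, working throughout with the Poincar\'{e}--Birkhoff--Witt basis of Theorem \ref{thm:basisURA}. I attach to each basis monomial $A^i D^j B^k \alpha^r \delta^s \beta^t$ the weight $i + 2j + k + r + s + t$, so that by Definition \ref{def:Rn} the space $\Re_n$ is spanned by the basis monomials of weight at most $n$. I will use freely the identities $\Re_1^a \cdot \Re_1^b = \Re_1^{a+b}$, which is immediate from the definition of the powers $\Re_1^n$, and $\Re_1^m \subseteq \Re_1^n$ for $m \le n$, which holds because $1 \in \Re_1$ gives $\Re_1^m = \Re_1^m \cdot \F 1 \subseteq \Re_1^m \cdot \Re_1^{n-m} = \Re_1^n$.

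For $\Re_n \subseteq \Re_1^n$ I argue directly on a basis monomial $A^i D^j B^k \alpha^r \delta^s \beta^t$ of weight $m \le n$. By Lemma \ref{lem:R1basis} each of $A, B, \alpha, \beta, \delta$ lies in $\Re_1$, and by Lemma \ref{lem:D=0}(i) the element $D$ lies in $\Re_1^2$; hence $A^i \in \Re_1^i$, $D^j \in \Re_1^{2j}$, $B^k \in \Re_1^k$, and $\alpha^r \in \Re_1^r$, $\delta^s \in \Re_1^s$, $\beta^t \in \Re_1^t$. Multiplying these and applying $\Re_1^a \cdot \Re_1^b = \Re_1^{a+b}$ places the monomial in $\Re_1^m \subseteq \Re_1^n$. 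Since such monomials span $\Re_n$, this inclusion follows with no further computation.

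For $\Re_1^n \subseteq \Re_n$ I extend the weight to the whole free monoid $M$ of Theorem \ref{thm:basisURA} by giving $A, B, \alpha, \beta, \delta$ weight $1$ and $D$ weight $2$ and summing over the letters of a word; this weight is additive under concatenation. The crucial observation is that every rule in the reduction system of Theorem \ref{thm:basisURA} rewrites a word as an $\F$-linear combination of words of no greater weight: the commuting rules preserve weight, while $BA = AB - 2D$, $DA = AD + A\delta - A^2 - 2AB + 2D - \alpha$, and $BD = DB + B\delta - B^2 - 2AB + 2D + \beta$ each have all right-hand terms of weight at most that of the left-hand side. Consequently a single reduction step applied anywhere in a word does not increase its weight. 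By Lemma \ref{lem:R1basis}, $\Re_1^n$ is spanned by products $x_1 \cdots x_n$ with each $x_i \in \{1, A, B, \alpha, \delta, \beta\}$, and such a product is a word of weight at most $n$; reducing it to PBW form therefore yields a combination of basis monomials of weight at most $n$, which lies in $\Re_n$. I expect the weight-monotonicity check to be the only delicate point, and hence the main obstacle: it hinges on assigning $D$ the weight $2$ --- consistent with $D = \tfrac{1}{2}[A,B]$ being quadratic in the generators --- and is a finite, routine verification rather than a conceptual hurdle.
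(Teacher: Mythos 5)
Your proof is correct, and while the easy inclusion $\Re_n\subseteq\Re_1^n$ is handled exactly as in the paper (via Lemma \ref{lem:C=0}/Lemma \ref{lem:D=0}(i)), your argument for $\Re_1^n\subseteq\Re_n$ takes a genuinely different route. The paper proceeds by induction on $n$, reducing to the single nontrivial case of right-multiplying a basis monomial by $A$, which is controlled by the explicit commutation formula of Lemma \ref{lem:ADBA}; you instead observe that the reduction system underlying Theorem \ref{thm:basisURA} is weight-nonincreasing for the grading in which $A,B,\alpha,\beta,\delta$ have weight $1$ and $D$ has weight $2$, so that any product of $n$ elements of $\Re_1$ normalizes to a combination of PBW monomials of weight at most $n$. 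Your check of the three nontrivial rewrite rules is accurate, and the Diamond Lemma setup already in place guarantees that the rewriting terminates in the normal form, so there is no gap and no circularity (you never invoke Lemma \ref{lem:ADBA}). The trade-off is that your argument is shorter and more conceptual, piggybacking on machinery already verified, whereas the paper's inductive computation produces the leading-term formula $A^iD^jB^kA\equiv A^{i+1}D^jB^k-2kA^iD^{j+1}B^{k-1}$, which is reused later (in Lemma \ref{lem:[A,ADB]} and Proposition \ref{prop:no0divisor}) and so is not wasted effort in the paper's economy. Either proof stands on its own.
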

\begin{proof}
Fix $n\in \N$. Let $i,j,k,r,s,t\in \N$ with $i+2j+k+r+s+t\leq n$. It follows from Lemma \ref{lem:C=0} and Lemma \ref{lem:D=0}(i) that 
$$
A^i D^j B^k \alpha^r \delta^s \beta^t \in \Re_1^n.
$$
By Definition \ref{def:Rn} this shows that $\Re_n\subseteq \Re_1^n$.

To obtain the reverse inclusion, we proceed by induction on $n$. The case when $n=0$ is trivial. 
 Suppose that $n\geq 1$.
Let $i,j,k,r,s,t\in \N$ with $i+2 j+k+r+s+t\leq n-1$. 
We show that $A^i D^j B^k \alpha^r \delta^s \beta^t X\in \Re_n$ for each basis element $X$ from Lemma \ref{lem:R1basis}.
By Lemma \ref{lem:alp&bet&gam&del}(ii), this statement is true for each $X\in \{B,\alpha,\delta,\beta, 1\}$. 
By Lemma \ref{lem:ADBA} and since $\Re_{n-1}=\Re_1^{n-1}$ by the inductive hypothesis, the monomial $A^i D^j B^k \alpha^r \delta^s \beta^t A$ is equal to 
\begin{gather}\label{e:multiplyA}
\left\{
\begin{array}{ll}
A^{i+1} D^j \alpha^r \delta^s \beta^t
\pmod{\Re_{n-1}}
\qquad &\hbox{if $k=0$},
\\
(A^{i+1} D^j B^k
-2k A^i D^{j+1} B^{k-1}) \alpha^r \delta^s \beta^t
\pmod{\Re_{n-1}}
\qquad 
&\hbox{if $k\not=0$}.
\end{array}
\right.
\end{gather}
By Definition \ref{def:Rn}, each term in (\ref{e:multiplyA}) lies in $\Re_n$. It follows from these comments that $A^i D^j B^k \alpha^r \delta^s \beta^t A\in \Re_n$. Thus, we have shown that 
\begin{gather}\label{e:Rn-1R}
\Re_{n-1}\cdot \Re_1\subseteq \Re_n.
\end{gather}
By the inductive hypothesis, the left-hand side of (\ref{e:Rn-1R}) is equal to $\Re_1^n$. Thus $\Re_1^n\subseteq \Re_n$ and so $\Re_1^n=\Re_n$.
\end{proof}

\begin{thm}\label{thm:filtration}
The $\F$-subspaces $\{\Re_n\}_{n\in \N}$ of $\Re$ satisfy the following properties:
\begin{enumerate}
\item $\Re=\bigcup_{n\in \N} \Re_n$,

\item $\Re_n\subseteq \Re_{n+1}$ for all $n\in \N$,

\item $\Re_m\cdot \Re_n=\Re_{m+n}$ for all $m,n\in \N$.
\end{enumerate}
Moreover, the $\F$-subspaces $\{\Re_n\}_{n\in \N}$ form an $\N$-filtration of $\Re$.
\end{thm}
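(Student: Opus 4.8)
The plan is to derive all three properties, and hence the filtration, directly from the Poincar\'e--Birkhoff--Witt basis of Theorem~\ref{thm:basisURA}, the definition of $\Re_n$ in Definition~\ref{def:Rn}, and above all from Lemma~\ref{lem:Rn=R1^n}, which already identifies $\Re_n$ with $\Re_1^n$. I would treat (i), (ii), (iii) in turn and then simply match them against the definition of an $\N$-filtration recorded in the Preliminaries.

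For (ii), the containment is immediate from Definition~\ref{def:Rn}: every spanning monomial $A^iD^jB^k\alpha^r\delta^s\beta^t$ with $i+2j+k+r+s+t\leq n$ also satisfies $i+2j+k+r+s+t\leq n+1$, so the spanning set for $\Re_n$ is contained in that for $\Re_{n+1}$. For (i), I would assign to each basis monomial $A^iD^jB^k\alpha^r\delta^s\beta^t$ the weight $n=i+2j+k+r+s+t$, so that this monomial lies in $\Re_n$ by Definition~\ref{def:Rn}. Since by Theorem~\ref{thm:basisURA} these monomials form a basis of $\Re$, any $x\in\Re$ is a finite $\F$-linear combination of such monomials and therefore lies in $\Re_N$ for $N$ equal to the largest weight occurring; this gives $\Re\subseteq\bigcup_{n}\Re_n$, and the reverse inclusion is trivial.

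For (iii), I would invoke Lemma~\ref{lem:Rn=R1^n} to write $\Re_m=\Re_1^m$, $\Re_n=\Re_1^n$, and $\Re_{m+n}=\Re_1^{m+n}$. It then remains only to observe that $\Re_1^m\cdot\Re_1^n=\Re_1^{m+n}$, which is immediate from the definition of the product $H\cdot K$ of subspaces together with associativity: a product of $m$ factors from $\Re_1$ times a product of $n$ factors from $\Re_1$ is a product of $m+n$ factors from $\Re_1$, and conversely every such product splits in this way. Combining, $\Re_m\cdot\Re_n=\Re_1^m\cdot\Re_1^n=\Re_1^{m+n}=\Re_{m+n}$. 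Finally, the ``moreover'' clause follows formally: (ii) gives the chain $\Re_0\subseteq\Re_1\subseteq\cdots$, (i) gives $\bigcup_n\Re_n=\Re$, and (iii) gives in particular $\Re_m\cdot\Re_n\subseteq\Re_{m+n}$, which are exactly the three conditions in the definition of an $\N$-filtered algebra.

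I do not anticipate a genuine obstacle here: the substantive work has already been carried out in Lemma~\ref{lem:Rn=R1^n}, whose proof rests in turn on the Diamond Lemma and the resolution of the overlap ambiguity $BDA$ underlying Theorem~\ref{thm:basisURA}. The present statement is essentially a repackaging of those results, and the only point requiring any care is the elementary identity $\Re_1^m\cdot\Re_1^n=\Re_1^{m+n}$ for subspace products.
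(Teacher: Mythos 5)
Your proposal is correct and follows essentially the same route as the paper: (i) from the PBW basis of Theorem~\ref{thm:basisURA}, (ii) directly from Definition~\ref{def:Rn}, and (iii) by reducing both sides to $\Re_1^{m+n}$ via Lemma~\ref{lem:Rn=R1^n}. The only difference is that you spell out the elementary identity $\Re_1^m\cdot\Re_1^n=\Re_1^{m+n}$ and the matching against the definition of an $\N$-filtration, which the paper leaves implicit.
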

\begin{proof} Assertions (i) and (ii) are immediate from Theorem \ref{thm:basisURA} and Definition \ref{def:Rn}, respectively. By Lemma \ref{lem:Rn=R1^n}, both $\Re_m\cdot \Re_n$ and $\Re_{m+n}$ are equal to $\Re_1^{m+n}$. Assertion (iii) follows.
\end{proof}

We now give two additional results concerning the $\N$-filtration of $\Re$ from Theorem \ref{thm:filtration}.

\begin{lem}\label{lem:Rn&D6}
With reference to Proposition \ref{prop:D6},  
 $\Re_n$ is invariant under the $D_6$-action on $\Re$ for each $n\in \N$.
\end{lem}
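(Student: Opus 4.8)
The plan is to prove that each $\Re_n$ is invariant under the $D_6$-action by reducing the problem to the two generators $\sigma$ and $\tau$ of $D_6$, and then checking invariance on a spanning set for $\Re_n$. Since $D_6$ is generated by $\sigma$ and $\tau$, and since the $D_6$-action is by $\F$-algebra (anti)automorphisms, it suffices to show that $\sigma(\Re_n)\subseteq\Re_n$ and $\tau(\Re_n)\subseteq\Re_n$ for all $n$; invariance under the whole group then follows because the inverses of $\sigma,\tau$ are again words in $\sigma,\tau$, and a finite group forces the containments to be equalities. First I would recall from Lemma~\ref{lem:Rn=R1^n} that $\Re_n=\Re_1^n$, so it is enough to prove that $\Re_1$ itself is $\sigma$- and $\tau$-invariant; the multiplicativity of the filtration ($\Re_1^n$ being an $n$-fold product) together with the (anti)multiplicativity of $\sigma,\tau$ then upgrades $\Re_1$-invariance to $\Re_n$-invariance for every $n$, since an antiautomorphism sends a product $x_1\cdots x_n$ of elements of $\Re_1$ to $\sigma(x_n)\cdots\sigma(x_1)$, still a product of $n$ elements of $\Re_1$.

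The key computational step is therefore to verify that $\Re_1$ is invariant. By Lemma~\ref{lem:R1basis}, the space $\Re_1$ has $\F$-basis $A,B,\alpha,\beta,\delta,1$. I would apply Proposition~\ref{prop:D6} to each basis element and confirm the image lands in $\Re_1$. For $\sigma$: it sends $A\mapsto B$, $B\mapsto A$, $\delta\mapsto\delta$, $\alpha\mapsto-\beta$, $\beta\mapsto-\alpha$, and fixes $1$, so every basis element maps to an $\F$-linear combination of the basis, hence $\sigma(\Re_1)=\Re_1$. For $\tau$: it sends $A\mapsto B$, $B\mapsto C$, $\alpha\mapsto\beta$, $\beta\mapsto\gamma$, $\delta\mapsto\delta$, and fixes $1$. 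Here the only subtlety is that $\tau(B)=C$ and $\tau(\beta)=\gamma$ are not literally among the listed basis vectors, but by Lemma~\ref{lem:C=0} both $C$ and $\gamma$ already lie in $\Re_1$ (indeed $C=\delta-A-B$ and $\gamma=-\alpha-\beta$ by \eqref{r:delta} and Lemma~\ref{lem:alp&bet&gam&del}(iii)), so $\tau(\Re_1)\subseteq\Re_1$ as well.

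The main obstacle, such as it is, is purely bookkeeping: one must be careful that $\tau$ is an \emph{anti}automorphism, so that when passing from $\Re_1$-invariance to $\Re_n=\Re_1^n$-invariance the order of factors reverses; but since $\Re_1^n$ is symmetric under reversing the order of its $n$ generating factors (it is simply the span of all length-$n$ products of elements of $\Re_1$), this reversal does not leave $\Re_1^n$, and the argument goes through unchanged. Finally, I would note that both maps are injective $\F$-linear endomorphisms of the finite-dimensional space $\Re_n$ (it is finite-dimensional by Theorem~\ref{thm:basisURA} and Definition~\ref{def:Rn}), so the containments $\sigma(\Re_n)\subseteq\Re_n$ and $\tau(\Re_n)\subseteq\Re_n$ are in fact equalities, giving genuine invariance. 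Thus $\Re_n$ is invariant under $\sigma$, under $\tau$, and hence under all of $D_6$, which is the claim.
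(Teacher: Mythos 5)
Your proof is correct and follows essentially the same route as the paper: the paper also reduces to showing $\Re_1$ is invariant under $\sigma$ and $\tau$ (via Proposition \ref{prop:D6}) and then invokes $\Re_n=\Re_1^n$ from Lemma \ref{lem:Rn=R1^n}. Your additional remarks---that $\tau(B)=C$ and $\tau(\beta)=\gamma$ lie in $\Re_1$, that antiautomorphisms merely reverse the order of factors in $\Re_1^n$, and that the containments are equalities by finite-dimensionality---are exactly the details the paper leaves implicit.
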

\begin{proof}
By Proposition \ref{prop:D6}, $\Re_1$ is invariant under the $D_6$-action on $\Re$.  The result follows from this along with Lemma  \ref{lem:Rn=R1^n}.
\end{proof}

\begin{lem}\label{lem:Rn/Rn-1}
For each $n\in\N$, the cosets 
\begin{gather}\label{e:Rn/Rn-1}
A^i D^j B^k \alpha^r \delta^s \beta^t
+\Re_{n-1}
\qquad 
\hbox{for all $i,j,k,\ell,r,s\in \N$ with $i+2 j+k+r+s+t=n$}
\end{gather}
form an $\F$-basis for the $\F$-vector space $\Re_n/\Re_{n-1}$.
\end{lem}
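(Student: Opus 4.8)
The plan is to read this off directly from the Poincar\'e--Birkhoff--Witt basis of Theorem~\ref{thm:basisURA} together with Definition~\ref{def:Rn}, with no new computation required. For a monomial $w=A^i D^j B^k \alpha^r \delta^s \beta^t$ let me write $\|w\|=i+2j+k+r+s+t$ for its weight. By Theorem~\ref{thm:basisURA} the set $\mathcal B$ of all such monomials is an $\F$-basis of $\Re$, and by Definition~\ref{def:Rn} the subspace $\Re_n$ is the $\F$-span of $\mathcal B_n:=\{\,w\in\mathcal B : \|w\|\le n\,\}$. Since $\mathcal B_n\subseteq\mathcal B$ is a subset of a basis, it is itself $\F$-linearly independent, hence an $\F$-basis of $\Re_n$; likewise $\mathcal B_{n-1}$ is an $\F$-basis of $\Re_{n-1}$.

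Next I would invoke the elementary fact that if a vector space $V$ has a basis partitioned as $\mathcal B_n=\mathcal B_{n-1}\sqcup\mathcal D$, then the images of the elements of $\mathcal D$ form a basis of the quotient $V/\mathrm{span}(\mathcal B_{n-1})$. Applying this with $V=\Re_n$ and $\mathrm{span}(\mathcal B_{n-1})=\Re_{n-1}$, and with $\mathcal D=\mathcal B_n\setminus\mathcal B_{n-1}$ equal to the set of monomials of weight exactly $n$, yields precisely the asserted statement: the cosets $A^i D^j B^k \alpha^r \delta^s \beta^t+\Re_{n-1}$ with $i+2j+k+r+s+t=n$ form an $\F$-basis of $\Re_n/\Re_{n-1}$.

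There is essentially no obstacle here. Spanning of the listed cosets is immediate from the definition of $\Re_n$, and their linear independence modulo $\Re_{n-1}$ is inherited from the global linear independence of the basis $\mathcal B$: because $\Re_{n-1}$ and $\Re_n$ are both coordinate subspaces with respect to the same basis $\mathcal B$, no monomial of weight $n$ can lie in $\Re_{n-1}$, so the weight-$n$ cosets are nonzero and independent. The only point meriting care is to state the partition $\mathcal B_n=\mathcal B_{n-1}\sqcup\mathcal D$ cleanly and to note that the monomial exponents are $i,j,k,r,s,t$ (so the index $\ell$ appearing in the displayed statement should be read as the exponent $t$). Thus the entire argument is a short appeal to Theorem~\ref{thm:basisURA} and Definition~\ref{def:Rn}.
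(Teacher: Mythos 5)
Your proposal is correct and follows essentially the same route as the paper: spanning is read off from Definition~\ref{def:Rn} and linear independence of the weight-$n$ cosets is inherited from the Poincar\'e--Birkhoff--Witt basis of Theorem~\ref{thm:basisURA}. The paper's proof is just a terser version of the same argument (and your observation that the index $\ell$ in the displayed statement is a typo for $t$ is accurate).
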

\begin{proof}
By Definition \ref{def:Rn}, $\Re_n/\Re_{n-1}$ is spanned by the elements in  (\ref{e:Rn/Rn-1}). The $\F$-linear independence of the elements in (\ref{e:Rn/Rn-1}) is immediate from Theorem \ref{thm:basisURA}. The result follows.
\end{proof}

We conclude this section with an application of the above results. In \cite{SH:2017-2}, we will provide another proof of the following proposition.

\begin{prop}\label{prop:no0divisor}
The $\F$-algebra $\Re$ has no zero divisors.
\end{prop}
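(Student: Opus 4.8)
The plan is to pass to the associated graded algebra of the $\N$-filtration $\{\Re_n\}_{n\in\N}$ from Theorem \ref{thm:filtration}, show that it has no zero divisors, and transfer this back to $\Re$ by a standard leading-term argument. Put $\operatorname{gr}\Re=\bigoplus_{n\in\N}\Re_n/\Re_{n-1}$ with the multiplication induced from $\Re$. For a nonzero $x\in\Re$, let $n$ be least with $x\in\Re_n$ and let $\bar x\in\Re_n/\Re_{n-1}$ be its image (its \emph{symbol}), which is nonzero by construction. If $x,y\in\Re$ are nonzero with symbols in degrees $m,n$, then $xy\in\Re_{m+n}$ and the image of $xy$ in $\Re_{m+n}/\Re_{m+n-1}$ equals $\bar x\,\bar y$; hence if $\operatorname{gr}\Re$ is a domain then $\bar x\,\bar y\neq0$, which forces $xy\notin\Re_{m+n-1}$ and in particular $xy\neq0$. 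Thus it suffices to prove that $\operatorname{gr}\Re$ has no zero divisors.

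The next step is to describe $\operatorname{gr}\Re$ by generators and relations. Write $\bar A,\bar B,\bar D,\bar\alpha,\bar\delta,\bar\beta$ for the symbols of the generators, of degrees $1,1,2,1,1,1$ respectively (the degree of $D$ is $2$ by the weights in Definition \ref{def:Rn}), and read off the top-degree parts of the defining relations. The relation $BA=AB-2D$ has all three terms in filtration degree $2$, so it yields $[\bar A,\bar B]=2\bar D$; Lemma \ref{lem:D=0}(ii),(iii) give $[\bar D,\bar A]=[\bar D,\bar B]=0$; and the centrality of $\alpha,\beta,\delta$ in $\Re$ makes $\bar\alpha,\bar\delta,\bar\beta$ central in $\operatorname{gr}\Re$. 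By Lemma \ref{lem:Rn/Rn-1}, the monomials $\bar A^{\,i}\bar D^{\,j}\bar B^{\,k}\bar\alpha^{\,r}\bar\delta^{\,s}\bar\beta^{\,t}$ form an $\F$-basis of $\operatorname{gr}\Re$.

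I would then exhibit $\operatorname{gr}\Re$ as an Ore (skew-polynomial) extension of a domain. Let $R=\F[\bar B,\bar D,\bar\alpha,\bar\delta,\bar\beta]$ be the commutative polynomial ring on the central symbols, which is a domain, and let $\partial$ be the $\F$-derivation of $R$ with $\partial(\bar B)=2\bar D$ and $\partial$ vanishing on $\bar D,\bar\alpha,\bar\delta,\bar\beta$. The relations above say precisely that $\operatorname{gr}\Re$ is generated by $R$ together with $\bar A$, subject to $\bar A\,r-r\,\bar A=\partial(r)$ for all $r\in R$; comparing the monomial basis $\bar A^{\,i}\cdot(\text{monomial in }R)$ of the Ore extension $R[\bar A;\partial]$ with the basis furnished by Lemma \ref{lem:Rn/Rn-1} shows that the natural surjection $R[\bar A;\partial]\to\operatorname{gr}\Re$ is an isomorphism. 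Since $R$ is a domain and the underlying endomorphism is the identity (hence injective), $R[\bar A;\partial]$ is a domain; therefore so is $\operatorname{gr}\Re$, and consequently $\Re$ has no zero divisors. (Equivalently, $\operatorname{gr}\Re$ is the enveloping algebra of the direct sum of a three-dimensional Heisenberg Lie algebra with a three-dimensional abelian one, which is a domain.)

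The main obstacle is the identification of $\operatorname{gr}\Re$ in the middle step: because commuting $A$ past $B$ produces a $D$ of the \emph{same} filtration degree, $\operatorname{gr}\Re$ is genuinely noncommutative, so a crude monomial-ordering argument within a single filtration degree will not suffice. The real content is checking that the leading terms of the defining relations close up to exactly the Ore-extension presentation and that no further relations are imposed; this is guaranteed by the Poincar\'e--Birkhoff--Witt basis of Theorem \ref{thm:basisURA} together with its graded refinement in Lemma \ref{lem:Rn/Rn-1}, which together pin down both the spanning relations and the linear independence needed to make the comparison of bases an isomorphism.
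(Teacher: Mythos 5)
Your proof is correct, but it takes a genuinely different route from the paper. The paper works directly with the Poincar\'e--Birkhoff--Witt basis: it defines the leading term of a nonzero element as the lexicographically greatest monomial $A^iD^jB^k\alpha^r\delta^s\beta^t$ (with nonzero coefficient) in the top filtration degree, and uses Lemma \ref{lem:D=0}(iii) and Lemma \ref{lem:ADBA} to check that leading terms multiply, so a product of nonzero elements has a nonzero leading term. You instead form the associated graded algebra $\operatorname{gr}\Re$, identify it via the graded basis of Lemma \ref{lem:Rn/Rn-1} as the Ore extension $R[\bar A;\partial]$ of the polynomial ring $R=\F[\bar B,\bar D,\bar\alpha,\bar\delta,\bar\beta]$ (equivalently, an enveloping algebra of a six-dimensional nilpotent Lie algebra), and invoke the standard fact that such an extension of a domain is a domain. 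Your identification of $\operatorname{gr}\Re$ is carried out carefully and is the real content; it buys a structural description of $\operatorname{gr}\Re$ that is reusable (e.g.\ it immediately gives the Gelfand--Kirillov dimension and Noetherianity), whereas the paper's argument is more elementary and self-contained. One small correction to your closing remark: a monomial-ordering argument within a single filtration degree \emph{does} suffice and is exactly what the paper does --- the noncommutativity you worry about only produces the correction term $-2k\,A^iD^{j+1}B^{k-1}$ of Lemma \ref{lem:ADBA}, which is lexicographically smaller than $A^{i+1}D^jB^k$, so the lex-leading monomial is preserved under multiplication. That said, this does not affect the validity of your own argument.
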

\begin{proof}
Let $u,v$ denote two nonzero elements of $\Re$.  We show that the product $uv$ is nonzero.  
By Theorem \ref{thm:filtration}(i),(ii) there exists a unique $m\in \N$ such that $u\in \Re_m$ but $u\notin \Re_{m-1}$. 
Similarly, there exists a unique $n\in \N$ such that $v\in \Re_n$ but $v\notin \Re_{n-1}$. 
Let $M$ (resp. $N$) denote the set of all $(i,j,k,r,s,t)\in \N^6$ with $i+2j+k+r+s+t$ equal to $m$ (resp. $n$).  
We now consider the cosets $u+\Re_{m-1}$ and $v+\Re_{n-1}$.  
By Lemma \ref{lem:Rn/Rn-1} and the above comments, the coset $u+\Re_{m-1}$ (resp. $v+\Re_{m-1}$) can be uniquely written as an 
$\F$-linear combination of $A^i D^{j} B^{k} \alpha^{r} \delta^s \beta^t+\Re_{m-1}$ (resp. $A^i D^{j} B^{k} \alpha^{r} \delta^s \beta^t+\Re_{n-1}$) for $(i,j,k,r,s,t)$ in $M$ (resp. $N$),
with at least one nonzero coefficient.  Let $(i_u,j_u,k_u,r_u,s_u,t_u)$ (resp. $(i_v,j_v,k_v,r_v,s_v,t_v)$) denote the greatest element of $M$ (resp. $N$), with respect to the lexicographical ordering, that has nonzero coefficient in the aforementioned linear combination. 
We refer to the coset 
$A^{i_u} D^{j_u} B^{k_u} \alpha^{r_u} \delta^{s_u} \beta^{t_u}+\Re_{m-1}$
(resp. $A^{i_v} D^{j_v} B^{k_v} \alpha^{r_v} \delta^{s_v} \beta^{t_v}+\Re_{n-1}$)
as the \emph{leading term} of $u$ (resp. $v$). 

By  Lemma \ref{lem:D=0}(iii) and Lemma \ref{lem:ADBA}, the product of 
$A^i D^j B^k \alpha^r \delta^s \beta^t$ and 
$A^{i'} D^{j'} B^{k'}  \alpha^{r'} \delta^{s'} \beta^{t'}$ has the leading term
$$
A^{i+i'} D^{j+j'} B^{k+k'} 
\alpha^{r+r'}\delta^{s+s'} \beta^{t+t'}+\Re_{m+n-1}
$$
for all $(i,j,k,\ell,r,s)\in M$ and all $(i',j',k',\ell',r',s')\in N$. 
By the above comment and since the lexicographic order is compatible with addition, the product $uv$ has the leading term 
$$
A^{i_u+i_v} D^{j_u+j_v} B^{k_u+k_v} 
\alpha^{r_u+r_v} \delta^{s_u+s_v} \beta^{t_u+t_v}+\Re_{m+n-1}.
$$ In particular, $u v$ is nonzero. The result follows.
\end{proof}

\section{The $D_6$-symmetric Casimir elements of $\Re$}

In Section \ref{section:D6}, we gave a faithful $D_6$-action on the Racah algebra $\Re$.
In this section, we define the Casimir class of $\Re$ and show that it is contained in the center of $\Re$.  We also introduce a set of three Casimir elements which are invariant under the $D_6$-action on $\Re$.

Let $Z(\Re)$ denote the center of $\Re$.  
Let $\Ce$ denote the $\F$-subalgebra of $\Re$ generated by $\alpha$, $\beta$, $\gamma$, $\delta$. 
By Lemma \ref{lem:alp&bet&gam&del}(ii), $\Ce$ is contained in $Z(\Re)$.

\begin{lem}\label{lemma:D6&Ce}
With reference to Proposition \ref{prop:D6}, 
$\Ce$ is invariant under the $D_6$-action of $\Re$. 
\end{lem}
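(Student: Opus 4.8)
The plan is to reduce the claim to the behavior of $\Ce$ under the two generators $\sigma$ and $\tau$ of $D_6$, and then to exploit the fact that $\Ce$ is commutative. First I would recall from the two tables in Proposition \ref{prop:D6} the images of the generators of $\Ce$: the map $\sigma$ sends
\begin{gather*}
\alpha\mapsto -\beta,\qquad \beta\mapsto -\alpha,\qquad \gamma\mapsto -\gamma,\qquad \delta\mapsto \delta,
\end{gather*}
while $\tau$ sends
\begin{gather*}
\alpha\mapsto \beta,\qquad \beta\mapsto \gamma,\qquad \gamma\mapsto \alpha,\qquad \delta\mapsto \delta.
\end{gather*}
In each case the image of every generator of $\Ce$ is again an $\F$-linear combination of $\alpha$, $\beta$, $\gamma$, $\delta$, hence lies in $\Ce$. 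Thus $\sigma$ and $\tau$ each carry the generating set of $\Ce$ into $\Ce$.

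Next I would upgrade this from the generators to all of $\Ce$. The essential observation is that, although $\sigma$ and $\tau$ are $\F$-algebra \emph{anti}automorphisms, their restrictions to the \emph{commutative} subalgebra $\Ce$ behave like ordinary $\F$-algebra homomorphisms: for $x,y\in\Ce$ we have $\sigma(xy)=\sigma(y)\sigma(x)=\sigma(x)\sigma(y)$ using commutativity of $\Ce$, and likewise for $\tau$. Since $\sigma$ and $\tau$ are $\F$-linear and multiplicative on $\Ce$ and send each algebra generator of $\Ce$ into $\Ce$, it follows that $\sigma(\Ce)\subseteq\Ce$ and $\tau(\Ce)\subseteq\Ce$. (That the images of the generators truly lie in $\Ce$, and in particular that $\gamma$ is accounted for, is consistent with $\gamma=-\alpha-\beta$ from Lemma \ref{lem:alp&bet&gam&del}(iii).)

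Finally I would invoke the fact that $\sigma$ and $\tau$ generate $D_6$. Since the collection of group elements under which $\Ce$ is invariant is closed under composition and contains both $\sigma$ and $\tau$, it is all of $D_6$; equivalently, writing any $g\in D_6$ as a word in $\sigma$ and $\tau$ and applying the two containments just established repeatedly gives $g(\Ce)\subseteq\Ce$. Hence $\Ce$ is invariant under the entire $D_6$-action, as desired.

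I do not expect any genuine obstacle here: the argument is a short, formal verification. The only point requiring a moment's care is the antiautomorphism-versus-homomorphism issue, which dissolves immediately because $\Ce$ is commutative; otherwise the proof amounts to reading off the two tables and noting that $D_6=\langle\sigma,\tau\rangle$.
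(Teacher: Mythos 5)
Your proof is correct and follows the same route as the paper, whose entire proof is the single line ``Use Proposition \ref{prop:D6}''; you have simply spelled out the details (images of the generators under $\sigma$ and $\tau$, multiplicativity on the commutative subalgebra $\Ce$, and the fact that $\sigma$, $\tau$ generate $D_6$) that the paper leaves implicit.
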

\begin{proof}
Use Proposition \ref{prop:D6}.
\end{proof}

In \cite[Section 2]{integrable2014-2}, the authors introduced a certain Casimir element of $\Re$.   
Inspired by the expression \cite[Equation (2.5)]{integrable2014-2}, we introduce the following definition.

\begin{defn}\label{defn:CasClass}
The coset 
\begin{gather*}
D^2+A^2+B^2
+\frac{(\delta+2)\{A,B\}-\{A^2,B\}-\{A,B^2\}}{2}
+A (\beta-\delta)
+B (\delta-\alpha)+\Ce
\end{gather*}
is called the {\it Casimir class} of $\Re$. Each element of the Casimir class of $\Re$ is called a {\it Casimir element} of $\Re$. 
\end{defn}

Define the three elements $\Omega_A$, $\Omega_B$, $\Omega_C$ of $\Re$ by
\begin{eqnarray}
\Omega_A
&=&
D^2
+
\frac{B A C
+C A B}{2}
+ A^2
+B \gamma
-C \beta
-A \delta,
\label{eq:CasA}
\\
\Omega_B
&=&
D^2
+
\frac{C B A
+A B C}{2}
+ B^2
+C \alpha 
-A \gamma
-B\delta,
\label{eq:CasB}
\\
\Omega_C
&=&
D^2
+
\frac{A C B
+B C A}{2}
+ C^2 
+A \beta 
-B\alpha
-C\delta.
\label{eq:CasC}
\end{eqnarray}

\begin{lem}\label{lem:D6&OmegaABC}
With reference to Proposition \ref{prop:D6}, the set $\{\Omega_A,\Omega_B,\Omega_C\}$ is invariant under the $D_6$-action on $\Re$.
The actions of $\sigma$ and $\tau$ on  $\Omega_A,\Omega_B,\Omega_C$ are as follows:
\begin{table}[H]
\centering
\extrarowheight=3pt
\begin{tabular}{c|ccc}
$u$  &$\Omega_A$ &$\Omega_B$ &$\Omega_C$ 
\\

\midrule[1pt]

$\sigma(u)$ &$\Omega_B$ &$\Omega_A$ &$\Omega_C$
\\
$\tau(u)$ &$\Omega_B$ &$\Omega_C$ &$\Omega_A$ 
\end{tabular}
\end{table}
\end{lem}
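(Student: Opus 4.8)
The plan is to verify the action table directly by computing how the antiautomorphisms $\sigma$ and $\tau$ transform each of $\Omega_A,\Omega_B,\Omega_C$, using the explicit images of the generators recorded in Proposition~\ref{prop:D6}. Since $D_6$ is generated by $\sigma$ and $\tau$, establishing the two rows of the table suffices to prove $D_6$-invariance of the set $\{\Omega_A,\Omega_B,\Omega_C\}$. The defining expressions \eqref{eq:CasA}--\eqref{eq:CasC} have a transparent cyclic structure under $A\to B\to C\to A$ together with $\alpha\to\beta\to\gamma\to\alpha$, so the $\tau$-row should fall out almost mechanically once one is careful about the antiautomorphism property and the sign change $D\mapsto -D$.

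The key technical point, and the step I expect to be the main obstacle, is keeping track of the fact that $\sigma$ and $\tau$ are \emph{anti}automorphisms rather than automorphisms: they reverse the order of products. Thus I would first record the general principle that for any product $XYZ$ one has $\tau(XYZ)=\tau(Z)\tau(Y)\tau(X)$, and similarly for $\sigma$. The cubic terms such as $\tfrac{BAC+CAB}{2}$ are the delicate ones. For instance, applying $\tau$ to $BAC$ gives $\tau(C)\tau(A)\tau(B)=ABC$, and applying $\tau$ to $CAB$ gives $\tau(B)\tau(A)\tau(C)=CBA$; hence $\tau$ sends the symmetrized cubic $\tfrac{BAC+CAB}{2}$ to $\tfrac{ABC+CBA}{2}=\tfrac{CBA+ABC}{2}$, which is exactly the cubic term appearing in $\Omega_B$. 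The point is that the explicit symmetrization $\tfrac{1}{2}(XYZ+ZYX)$ is precisely what makes these cubic terms invariant in form under an order-reversing map, so the reversal is absorbed without introducing spurious terms.

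Concretely, for the $\tau$-row I would apply $\tau$ term by term to \eqref{eq:CasA}: the square $D^2$ maps to $(-D)^2=D^2$; the cubic term maps to the cubic term of $\Omega_B$ as computed above; $A^2\mapsto B^2$; and the linear-times-central terms transform via $B\gamma\mapsto\gamma\tau(B)=\gamma C$ together with $\tau$ sending $\gamma\mapsto\alpha$, so $B\gamma\mapsto C\alpha$, and likewise $-C\beta\mapsto-A\gamma$ and $-A\delta\mapsto-B\delta$ since $\delta$ is $\tau$-invariant and central. Collecting these matches \eqref{eq:CasB} exactly, giving $\tau(\Omega_A)=\Omega_B$; the remaining two entries of the $\tau$-row follow by the same computation applied cyclically, which I would assert follows by symmetry rather than repeat. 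For the $\sigma$-row I would run the analogous term-by-term computation on $\Omega_A$, using $\sigma(A)=B$, $\sigma(B)=A$, $\sigma(C)=C$, $\sigma(D)=D$, together with $\sigma(\alpha)=-\beta$, $\sigma(\beta)=-\alpha$, $\sigma(\gamma)=-\gamma$, $\sigma(\delta)=\delta$, and verify that the sign changes in the central factors conspire correctly to yield $\Omega_B$. Here I would double-check the cubic term $\tfrac{BAC+CAB}{2}$ under $\sigma$: it maps to $\tfrac{\sigma(C)\sigma(A)\sigma(B)+\sigma(B)\sigma(A)\sigma(C)}{2}=\tfrac{CBA+ABC}{2}$, again the cubic term of $\Omega_B$, confirming consistency. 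Once $\sigma(\Omega_A)=\Omega_B$ and $\sigma(\Omega_C)=\Omega_C$ are checked (and $\sigma(\Omega_B)=\Omega_A$ by $\sigma^2=1$), both rows are established and the invariance of the set under the whole group is immediate.
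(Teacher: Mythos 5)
Your proposal is correct and is essentially the paper's own proof, which simply says to apply Proposition~\ref{prop:D6} to the expressions \eqref{eq:CasA}--\eqref{eq:CasC}; you have merely written out the term-by-term verification, correctly handling the order reversal of the antiautomorphisms and the sign bookkeeping. All of your individual computations (e.g.\ $\tau(BAC)=ABC$, $\sigma(-C\beta)=C\alpha$) check out.
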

\begin{proof}
Use Proposition \ref{prop:D6} along with (\ref{eq:CasA})--(\ref{eq:CasC}).
\end{proof}

\begin{prop}\label{prop:CasABACas}
Each of $\Omega_A$, $\Omega_B$, $\Omega_C$ is a Casimir element of $\Re$.
\end{prop}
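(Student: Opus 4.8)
The plan is to show that each of $\Omega_A$, $\Omega_B$, $\Omega_C$ lies in the Casimir class of Definition \ref{defn:CasClass}, i.e.\ that the difference between each $\Omega_X$ and the representative
\begin{equation*}
\Omega_0 = D^2+A^2+B^2
+\frac{(\delta+2)\{A,B\}-\{A^2,B\}-\{A,B^2\}}{2}
+A (\beta-\delta)
+B (\delta-\alpha)
\end{equation*}
lies in the commutative subalgebra $\Ce$. By Lemma \ref{lem:D6&OmegaABC}, the set $\{\Omega_A,\Omega_B,\Omega_C\}$ is permuted by the $D_6$-action, and by Lemma \ref{lemma:D6&Ce} the subalgebra $\Ce$ is $D_6$-invariant; hence the whole Casimir class (a coset of $\Ce$) is $D_6$-invariant provided its representative differs from a $D_6$-image of itself by an element of $\Ce$. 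So it suffices to prove a single one of the three containments, say $\Omega_A \in \Omega_0 + \Ce$, and then obtain the other two by applying $\sigma$ and $\tau$ — being careful that the representative $\Omega_0$ itself need only map into the class modulo $\Ce$, which I will verify.

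First I would reduce everything to the generating set $A$, $B$, $\delta$ together with the central elements $\alpha,\beta,\gamma,\delta$, using $C=\delta-A-B$ (from \eqref{r:delta}), $\gamma=-\alpha-\beta$ (Lemma \ref{lem:alp&bet&gam&del}(iii)), and $D=\tfrac12[A,B]$ (equation \eqref{r:D'}). The key technical input is Lemma \ref{lem:alternate}, which expresses $\alpha$ and $\beta$ as explicit noncommutative polynomials in $A$, $B$, $\delta$; I will use these to rewrite the terms $B\gamma-C\beta-A\delta$ appearing in $\Omega_A$, and likewise to rewrite the terms $A(\beta-\delta)+B(\delta-\alpha)$ in $\Omega_0$. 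After substituting $C=\delta-A-B$ into the symmetrized cubic $\tfrac12(BAC+CAB)$ and expanding, the entire expression $\Omega_A-\Omega_0$ becomes a polynomial in $A$, $B$, $D$, $\delta$, $\alpha$, $\beta$ whose non-central part must be simplified using the presentation relations of Proposition \ref{prop:presentation2} — chiefly $BA=AB-2D$, $DA=AD+A\delta-A^2-2AB+2D-\alpha$, and $BD=DB+B\delta-B^2-2AB+2D+\beta$ — to push all products into a normal form.

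The main obstacle will be the bookkeeping in this normal-form computation: the cubic term $\tfrac12(BAC+CAB)$ produces degree-three words in $A$ and $B$, and reducing these via the commutation relations generates many correction terms (multiples of $D$, of $\alpha$, of $\beta$, and products with $\delta$). The whole point is that after full reduction the two degree-$\ge 2$ parts of $\Omega_A$ and $\Omega_0$ must agree exactly, leaving only a residual element built from $\alpha$, $\beta$, $\gamma$, $\delta$ — which by definition lies in $\Ce$. I expect the $D^2$ and $A^2$, $B^2$ contributions to match cleanly once $D$ is expanded symmetrically, so the delicate part is tracking the mixed $AB$, $A^2B$, $AB^2$ terms and confirming their coefficients cancel against the $\{A,B\}$, $\{A^2,B\}$, $\{A,B^2\}$ terms in $\Omega_0$. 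Once the single identity $\Omega_A-\Omega_0\in\Ce$ is verified, Lemma \ref{lem:D6&OmegaABC} and Lemma \ref{lemma:D6&Ce} upgrade it to all three elements, completing the proof.
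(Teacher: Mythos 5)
Your overall strategy --- verify one congruence by direct computation and propagate to the other two via the $D_6$-action --- is the same as the paper's, but the way you have set up the reduction creates a near-circularity that you have only promised to resolve, and it does not actually save work. Applying $\tau$ to $\Omega_A\equiv\Omega_0\pmod{\Ce}$ yields $\Omega_B\equiv\tau(\Omega_0)\pmod{\Ce}$, so to conclude anything you must separately establish $\tau(\Omega_0)\equiv\Omega_0\pmod{\Ce}$ (and likewise for $\sigma$). That invariance of the coset is precisely Lemma \ref{lemma:OmegaABC&D6}, which the paper derives \emph{from} Proposition \ref{prop:CasABACas}; verifying it independently is a computation of essentially the same size as the one you are trying to avoid, since $\tau$ permutes $A,B,C$ and re-expressing $\tau(\Omega_0)$ in terms of $A,B$ amounts to comparing two of the symmetric elements anyway. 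As written, every computation that carries the actual content of the proposition is deferred (``I will verify,'' ``I expect''), so the proposal is a sound plan rather than a proof.

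The paper organizes the same ingredients more economically, and I recommend you restructure along its lines. First, it takes $\Omega_C$ (not $\Omega_A$) as the base case: substituting $C=\delta-A-B$ into \eqref{eq:CasC} and using only the centrality of $\delta$ turns $\Omega_C$ directly into the class representative, with no commutation relations and no normal-form reduction required; your choice of $\Omega_A$ forces the full cubic bookkeeping you describe. Second, instead of comparing a second element to $\Omega_0$, the paper computes the difference $\Omega_B-\Omega_C$ of two of the symmetric elements, where the cubic terms collapse to $\tfrac{1}{2}[A,[B,C]]$ and the whole difference reduces in a few lines to $\alpha+\alpha\delta\in\Ce$. Third, it applies $\tau$ to the congruence $\Omega_B\equiv\Omega_C\pmod{\Ce}$, which requires only $\tau(\Ce)=\Ce$ (Lemma \ref{lemma:D6&Ce}) and the permutation table of Lemma \ref{lem:D6&OmegaABC}, not the invariance of the coset $\Omega_0+\Ce$. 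If you want to exploit the symmetry, apply it to a congruence between two of the $\Omega$'s rather than to the representative $\Omega_0$.
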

\begin{proof}
We first show that $\Omega_C$ is a Casimir element of $\Re$.  By \eqref{r:delta}, $C=\delta-A-B$.  In the expression for $\Omega_C$ given in \eqref{eq:CasC}, eliminate $C$ using this result.  Rearrange the terms to obtain 
$$
\Omega_C=D^2+A^2+B^2
+\frac{(\delta+2)\{A,B\}-\{A^2,B\}-\{A,B^2\}}{2}
+A (\beta-\delta)
+B (\delta-\alpha).
$$
Hence $\Omega_C$ is a Casimir element of $\Re$ by Definition \ref{defn:CasClass}.

We next show that $\Omega_B$ is a Casimir element of $\Re$.  Consider the difference $\Omega_B-\Omega_C$.  
By (\ref{eq:CasB}) and (\ref{eq:CasC}),  $\Omega_B-\Omega_C$ equal to the sum of the following three terms:
\begin{gather}
\frac{CBA-BCA+ABC-ACB}{2},\label{eq:sum1}\\
B^2-C^2-B\delta +C\delta,\label{eq:sum2}\\
\alpha(B+C)-A(\beta+\gamma).\label{eq:sum3}
\end{gather}
We consider each of these summands individually.  
Observe that the expression \eqref{eq:sum1} can be written as $\frac{1}{2}[A,[B,C]]$. 
Using (\ref{r:D}) and (\ref{r:alpha}) to simplify this expression, we find that \eqref{eq:sum1} is equal to 
\begin{gather}\label{eq:part1}
BA-AC+\alpha.
\end{gather}
We now consider \eqref{eq:sum2}.  
Using \eqref{r:delta} to eliminate $\delta$ in \eqref{eq:sum2} and \eqref{r:D} to simplify the result, we find that \eqref{eq:sum2} is equal to 
\begin{gather}\label{eq:part2}
-2D-BA+CA.
\end{gather}
We now consider \eqref{eq:sum3}.   Recall from Lemma \ref{lem:alp&bet&gam&del}(iii) that $\beta+\gamma=-\alpha$.  It follows from this that \eqref{eq:sum3} is equal to $\alpha(A+B+C)$. By this and (\ref{r:delta}), the expression \eqref{eq:sum3} is equal to
\begin{gather}\label{eq:part3}
\alpha\delta.
\end{gather}
From the above comments we see that \eqref{eq:sum1}, \eqref{eq:sum2}, \eqref{eq:sum3} is equal to  \eqref{eq:part1}, \eqref{eq:part2}, \eqref{eq:part3} respectively.  
It follows from this that $\Omega_B-\Omega_C=
[C,A]-2D+\alpha+\alpha\delta$.
The right-hand side of this equation can be simplified using \eqref{r:D} to obtain 
$\Omega_B-\Omega_C=\alpha+\alpha\delta$.  So $\Omega_B,\Omega_C$ differ by an element of $\Ce$ and thus 
\begin{gather}\label{eq:OmegaAB}
\Omega_B=\Omega_C
\pmod{\Ce}.
\end{gather}
Hence $\Omega_B$ is a Casimir element of $\Re$ by Definition \ref{defn:CasClass}. 

We now show that $\Omega_A$ is a Casimir element of $\Re$.  Recall from Lemma \ref{lemma:D6&Ce} that $\Ce$ is invariant under $\tau$.  Recall from Lemma \ref{lem:D6&OmegaABC} that $\tau$ sends $\Omega_B$, $\Omega_C$ to $\Omega_C$, $\Omega_A$ respectively. Thus,
applying $\tau$ to (\ref{eq:OmegaAB}) yields 
$$
\Omega_C\equiv\Omega_A
\pmod{\Ce}.
$$ 
Hence $\Omega_A$ is a Casimir element of $\Re$. 
\end{proof}

\begin{cor}
The $D_6$-symmetric Casimir elements $\Omega_A$, $\Omega_B$, $\Omega_C$ of $\Re$ are mutually distinct.
\end{cor}
\begin{proof}
Recall from the proof of Proposition \ref{prop:CasABACas} that 
\begin{gather}\label{OB-OC}
\Omega_B-\Omega_C=\alpha+\alpha\delta.
\end{gather}
By Lemma \ref{lem:Cebasis} the right-hand side of (\ref{OB-OC}) is nonzero. Hence $\Omega_B\not=\Omega_C$. Applying $\sigma$ to (\ref{OB-OC}) yields that 
\begin{gather}\label{OA-OC}
\Omega_A-\Omega_C=\beta+\delta\beta
\end{gather}
is nonzero by Lemma \ref{lem:Cebasis} again. Hence $\Omega_A\not=\Omega_C$. Applying $\tau$ to (\ref{OA-OC}) yields that 
\begin{gather}\label{OB-OA}
\Omega_B-\Omega_A=-\gamma-\gamma\delta.
\end{gather}
By Lemma \ref{lem:alp&bet&gam&del} the right-hand side of (\ref{OB-OA}) is equal to $\alpha+\beta+\alpha\delta+\delta\beta$. It follows from Lemma \ref{lem:Cebasis} that $\Omega_A\not=\Omega_B$. The corollary follows.
\end{proof}

In light of Lemma \ref{lem:D6&OmegaABC} and Proposition \ref{prop:CasABACas}, we call $\Omega_A$, $\Omega_B$, $\Omega_C$ the {\it $D_6$-symmetric Casimir elements of $\Re$}.

\begin{lem}\label{lemma:OmegaABC&D6}
With reference to Proposition \ref{prop:D6}, the Casimir class of $\Re$ is invariant under the $D_6$-action on $\Re$. 
\end{lem}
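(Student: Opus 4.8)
The plan is to reduce the claim to the two invariance results already established: that the finite set $\{\Omega_A,\Omega_B,\Omega_C\}$ is $D_6$-invariant (Lemma \ref{lem:D6&OmegaABC}) and that the subalgebra $\Ce$ is $D_6$-invariant (Lemma \ref{lemma:D6&Ce}). The starting observation is that the Casimir class is nothing other than the coset $\Omega_C+\Ce$. Indeed, in the proof of Proposition \ref{prop:CasABACas} it was shown that, after eliminating $C$ via $C=\delta-A-B$, the element $\Omega_C$ coincides with the representative appearing in Definition \ref{defn:CasClass}; hence the Casimir class equals $\Omega_C+\Ce$. By Proposition \ref{prop:CasABACas} this same coset is also equal to $\Omega_A+\Ce$ and to $\Omega_B+\Ce$, since all three elements lie in it.

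First I would note that it suffices to verify invariance under the two generators $\sigma$ and $\tau$ of $D_6$: if a subset of $\Re$ is fixed setwise by both $\sigma$ and $\tau$, then it is fixed by every word in $\sigma,\tau$, and hence by all of $D_6$. So fix $g\in\{\sigma,\tau\}$. Although $g$ acts as an $\F$-algebra antiautomorphism, it is in particular an additive (indeed $\F$-linear) bijection, and therefore
\begin{equation*}
g(\Omega_C+\Ce)=g(\Omega_C)+g(\Ce).
\end{equation*}
I would then invoke the two cited results: by Lemma \ref{lemma:D6&Ce} we have $g(\Ce)=\Ce$, and by Lemma \ref{lem:D6&OmegaABC} we have $g(\Omega_C)\in\{\Omega_A,\Omega_B,\Omega_C\}$. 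Combining these yields
\begin{equation*}
g(\Omega_C+\Ce)=g(\Omega_C)+\Ce.
\end{equation*}

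To finish, I would observe that by Proposition \ref{prop:CasABACas} each of $\Omega_A,\Omega_B,\Omega_C$ belongs to the Casimir class $\Omega_C+\Ce$, so $g(\Omega_C)+\Ce=\Omega_C+\Ce$. This gives $g(\Omega_C+\Ce)=\Omega_C+\Ce$ for both $g=\sigma$ and $g=\tau$, and invariance under all of $D_6$ then follows from the remark that the generators suffice.

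There is no genuine analytic obstacle here: the substance of the lemma has been front-loaded into Lemmas \ref{lem:D6&OmegaABC} and \ref{lemma:D6&Ce} and Proposition \ref{prop:CasABACas}. The only point requiring a moment's care is that $\sigma$ and $\tau$ are antiautomorphisms rather than automorphisms, so one must be sure the step $g(\Omega_C+\Ce)=g(\Omega_C)+g(\Ce)$ uses only the additivity of $g$ and the setwise invariance of $\Ce$, not multiplicativity — which is exactly the situation here.
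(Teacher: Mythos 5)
Your proof is correct and follows essentially the same route as the paper's: identify the Casimir class with the coset $\Omega_A+\Ce=\Omega_B+\Ce=\Omega_C+\Ce$ via Proposition \ref{prop:CasABACas}, then combine Lemma \ref{lemma:D6&Ce} with Lemma \ref{lem:D6&OmegaABC}. You simply spell out the reduction to generators and the additivity of the antiautomorphisms, which the paper leaves implicit.
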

\begin{proof} 
By Proposition \ref{prop:CasABACas}, the Casimir class is equal to 
\begin{gather}\label{e:Casclass}
\Omega_A+\Ce=\Omega_B+\Ce=\Omega_C+\Ce.
\end{gather}
By Lemma \ref{lemma:D6&Ce} and Lemma \ref{lem:D6&OmegaABC}, the coset in (\ref{e:Casclass}) is invariant under the $D_6$-action on $\Re$.  The result follows.
\end{proof}

\begin{prop}\label{prop:CasCentral}
Each Casimir element of $\Re$ is central in $\Re$.
\end{prop}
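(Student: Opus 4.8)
The plan is to reduce the statement to a single commutator computation by exploiting that $\Ce$ is already central and that the $D_6$-action supplies a symmetry exchanging the generators $A$ and $B$. First I would note that, by Proposition \ref{prop:CasABACas}, the Casimir class equals $\Omega_C+\Ce$, so an arbitrary Casimir element has the form $\Omega_C+c$ with $c\in\Ce$. Since $\Ce\subseteq Z(\Re)$ by Lemma \ref{lem:alp&bet&gam&del}(ii), it suffices to prove that the single element $\Omega_C$ is central. By Lemma \ref{lem:alp&bet&gam&del}(i) the algebra $\Re$ is generated by $A$, $B$, $C$, so $\Omega_C$ is central precisely when it commutes with each of $A$, $B$, $C$. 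Because $\delta=A+B+C$ is central and hence $C=\delta-A-B$, commuting with $A$ and $B$ automatically forces commuting with $C$; thus it is enough to establish $[\Omega_C,A]=0$ and $[\Omega_C,B]=0$.

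Next I would halve this work using the antiautomorphism $\sigma$ of Proposition \ref{prop:D6}. Since $\sigma$ reverses products, fixes $\Omega_C$ by Lemma \ref{lem:D6&OmegaABC}, and interchanges $A$ and $B$, applying $\sigma$ to the commutator $[\Omega_C,A]$ gives $\sigma(\Omega_C A)-\sigma(A\Omega_C)=B\Omega_C-\Omega_C B=-[\Omega_C,B]$. As $\sigma$ is bijective, $[\Omega_C,A]=0$ implies $[\Omega_C,B]=0$. Hence the entire proposition comes down to verifying the single identity $[\Omega_C,A]=0$.

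For that computation I would use the representative
\[
\Omega_C=D^2+A^2+B^2+\frac{(\delta+2)\{A,B\}-\{A^2,B\}-\{A,B^2\}}{2}+A(\beta-\delta)+B(\delta-\alpha)
\]
obtained in the proof of Proposition \ref{prop:CasABACas}, which is expressed entirely in terms of $A$, $B$, $D$ and the central elements $\alpha$, $\beta$, $\delta$. Using the relations of Proposition \ref{prop:presentation2} — namely $BA=AB-2D$, the expansion of $DA$, the centrality of $\alpha$, $\beta$, $\delta$ — together with $D=[A,B]/2$, I would move $A$ rightward through each monomial of $\Omega_C$ and collect terms, confirming that the commutator vanishes.

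The main obstacle is exactly this last piece of bookkeeping: $\Omega_C$ is quadratic in $A$, $B$ and carries the term $D^2$, so commuting $A$ past it requires repeatedly rewriting $BA$, $DA$, and $D^2A$ via the presentation, and the resulting cancellations are delicate. Everything else in the argument is structural and immediate once the reduction to $[\Omega_C,A]=0$ is in place.
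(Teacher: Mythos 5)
Your structural reduction is sound and, if anything, slightly more economical than the paper's. The paper proves the single identity $[\Omega_A,A]=0$ by direct computation (using the form \eqref{eq:CasA}, where $A^2$ and $A\delta$ drop out of the commutator immediately, and using the automorphism $\sigma\tau$, which fixes $A$ and swaps $B\leftrightarrow C$, to get $[CAB,A]$ from $[BAC,A]$ for free), then applies $\tau$ and $\tau^2$ to obtain $[\Omega_B,B]=[\Omega_C,C]=0$, and finally uses the fact that any two Casimir elements differ by an element of $Z(\Re)$ to conclude that every Casimir element commutes with each of $A$, $B$, $C$. You instead funnel everything into the single commutator $[\Omega_C,A]$: the passage from one Casimir element to all of them via $\Ce\subseteq Z(\Re)$ is correct, the elimination of $C$ via the central element $\delta$ is correct, and the use of the antiautomorphism $\sigma$ (which fixes $\Omega_C$ by Lemma \ref{lem:D6&OmegaABC} and sends $[\Omega_C,A]\mapsto -[\Omega_C,B]$) is a legitimate, non-circular way to dispense with the second commutator. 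Both routes ultimately rest on exactly one hard computation.

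The genuine gap is that you never perform that computation: $[\Omega_C,A]=0$ is where the entire content of the proposition lives, and your proposal only describes a normal-ordering strategy for it. As written, the proof is incomplete. The identity is true and your method (rewriting $BA$, $DA$, $D^2A$ via Proposition \ref{prop:presentation2} and collecting terms) would verify it, but be aware that your choice of representative — the $A,B,D$-form of $\Omega_C$ — makes the bookkeeping noticeably heavier than the paper's choice: you must handle $[B^2,A]$, $[\{A,B\},A]$, $[\{A^2,B\},A]$, $[\{A,B^2\},A]$, $[B(\delta-\alpha),A]$, and $[D^2,A]$, whereas in the paper's computation with $\Omega_A$ in the form \eqref{eq:CasA} only $[D^2,A]$, $[BAC,A]$, $[CAB,A]$, $[B,A]\gamma$, $[C,A]\beta$ survive and the middle two are exchanged by a symmetry fixing $A$. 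If you keep your reduction, consider computing $[\Omega_C,C]$ from \eqref{eq:CasC} instead (reducing to it via $\sigma$ and $\tau$ as the paper does), or at least carry the expansion through to an explicit cancellation; until then the proof is a correct plan rather than a proof.
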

\begin{proof}
We first show that $\Omega_A$ commutes with $A$.  
Using (\ref{eq:CasA}), a direct calculation yields
\begin{gather}\label{eq:OmegaAA}
[\Omega_A,A]=
[D^2,A]
+\frac{[BAC,A]+[CAB,A]}{2}
+[B,A]\gamma-[C,A]\beta.
\end{gather}
We now simplify the terms on the right-hand side of \eqref{eq:OmegaAA}. 
By (\ref{r:alpha}), 
\begin{align*}
D^2A&=DAC+DAD-DBA-D\alpha, 
\\
AD^2&=BAD+DAD-ACD+D\alpha.
\end{align*}
Thus
\begin{align*}
[D^2,A]=ACD+DAC-BAD-DBA-2D\alpha. 
\end{align*}
It follows from (\ref{r:D})  that 
\begin{align*}
BACA
&=
BA^2 C+2BAD,
\\
ABAC
&=
BA^2 C+2DAC.
\end{align*}
Thus
\begin{gather}\label{e:[BAC,A]}
[BAC,A]=2(BAD-DAC).
\end{gather}
By Proposition \ref{prop:D6}, the action of $\sigma\cdot \tau$ on $\Re$ is as the  $\F$-algebra automorphism of $\Re$ that fixes $A$ and sends 
$$
B\mapsto C,
\qquad 
C\mapsto B,
\qquad 
D\mapsto -D.
$$
Thus, applying $\sigma\cdot \tau$ to (\ref{e:[BAC,A]}) we obtain 
$$
[CAB,A]=2(DAB-CAD).
$$
By (\ref{r:D}), we have $[B,A]\gamma=-2D\gamma $ and $[C,A]\beta=2 D\beta$. 

Combining \eqref{eq:OmegaAA} with the above results concerning the terms on the right-hand side of \eqref{eq:OmegaAA}, we obtain 
\begin{gather}
[\Omega_A, A]=[A,C]D-D[B,A]-2(\alpha+\beta+\gamma) D.\label{eq:OmegaAA2}
\end{gather}
Observe that the right-hand side of \eqref{eq:OmegaAA2} is zero by \eqref{r:D} and Lemma \ref{lem:alp&bet&gam&del}(iii).  
Hence
\begin{gather}\label{e:OAA=0}
[\Omega_A, A]=0.
\end{gather}
By Proposition \ref{prop:D6}  and Lemma \ref{lem:D6&OmegaABC}, applying $\tau$,  $\tau^2$ to (\ref{e:OAA=0}) yields that  
\begin{gather}\label{e:OBB=0}
[\Omega_B,B]=0, 
\qquad 
[\Omega_C,C]=0 
\end{gather}
respectively.
The elements $\Omega_A$, $\Omega_B$, $\Omega_C$ are Casimir elements of $\Re$ by Proposition \ref{prop:CasABACas}. 
Since any two Casimir elements differ by a central element of $\Re$, 
 (\ref{e:OAA=0}) and (\ref{e:OBB=0}) imply that every Casimir element of $\Re$ commutes with $A$, $B$, $C$. 
 Recall from Lemma \ref{lem:alp&bet&gam&del}(i) that $\Re$ is generated by $A$, $B$, $C$. 
The proposition follows.
\end{proof}

\section{The center of $\Re$ at characteristic zero}

In this section, we show that any Casimir element $\Omega$ is algebraically independent over $\Ce$. Moreover, we show that the center $Z(\Re)$ of $\Re$ is equal to $\Ce[\Omega]$ provided that ${\rm char}\,\F=0$. Many of the results in this section involve Casimir elements of $\Re$ which are congruent to $D^2$ modulo $R_3$.  The following lemma shows the existence of such elements.

\begin{lem}\label{lemma:Casmod}
For $\Omega\in \{\Omega_A,\Omega_B,\Omega_C\}$, we have $\Omega=D^2\bmod{\Re_3}.$
\end{lem}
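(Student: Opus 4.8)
The plan is to prove the congruence $\Omega = D^2 \pmod{\Re_3}$ for each of the three $D_6$-symmetric Casimir elements by reducing the claim to the single case $\Omega_A$. First I would observe that the three elements $\Omega_A$, $\Omega_B$, $\Omega_C$ are permuted among themselves by the $D_6$-action according to Lemma \ref{lem:D6&OmegaABC}, and that each $\Re_n$ is $D_6$-invariant by Lemma \ref{lem:Rn&D6}. Since $\tau$ sends $D \mapsto -D$, it fixes $D^2$, and $\tau$ carries the relation $\Omega_A \equiv D^2 \pmod{\Re_3}$ to $\Omega_B \equiv D^2 \pmod{\Re_3}$ and then to $\Omega_C \equiv D^2 \pmod{\Re_3}$. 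Thus it suffices to establish the single congruence for $\Omega_A$.

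For $\Omega_A$, I would work from the defining expression \eqref{eq:CasA},
\begin{gather*}
\Omega_A = D^2 + \frac{BAC + CAB}{2} + A^2 + B\gamma - C\beta - A\delta,
\end{gather*}
and show that every term other than $D^2$ lies in $\Re_3$. The key tool is Lemma \ref{lem:C=0}, which places each of $A$, $B$, $C$, $\alpha$, $\beta$, $\gamma$, $\delta$ in $\Re_1$, together with the grading property $\Re_m \cdot \Re_n = \Re_{m+n}$ from Theorem \ref{thm:filtration}(iii). The linear and quadratic tail terms $A^2$, $B\gamma$, $C\beta$, $A\delta$ each lie in $\Re_1 \cdot \Re_1 = \Re_2 \subseteq \Re_3$. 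The only delicate terms are the cubic products $BAC$ and $CAB$; by the grading each lies in $\Re_1^3 = \Re_3$, so $\tfrac{1}{2}(BAC + CAB) \in \Re_3$ as well. Hence $\Omega_A - D^2 \in \Re_3$, which is exactly the desired congruence.

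The step requiring the most care is confirming that the degree bookkeeping is honest: one must check that $D^2$ itself is \emph{not} being absorbed into $\Re_3$ and that $D^2$ genuinely has filtration degree $4$ (since $D \in \Re_2 \setminus \Re_1$, as the exponent weighting in Definition \ref{def:Rn} assigns $D$ weight $2$). The content of the lemma is precisely that the nontrivial leading part of each Casimir element is $D^2$, with all lower-filtration corrections collected into $\Re_3$; this is what makes the Casimir element a genuinely degree-$4$ object whose top symbol is $D^2$. I expect no genuine obstacle here beyond the routine verification that every non-$D^2$ summand has total weight at most $3$, which follows immediately from Lemma \ref{lem:C=0} and the submultiplicativity of the filtration. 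The main conceptual point to record is the reduction via the $D_6$-symmetry, which lets a single filtration computation settle all three cases at once.
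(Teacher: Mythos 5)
Your proof is correct and uses essentially the same argument as the paper: every summand of $\Omega_A,\Omega_B,\Omega_C$ other than $D^2$ is a product of at most three of the elements listed in Lemma \ref{lem:C=0}, hence lies in $\Re_1^3=\Re_3$ by Theorem \ref{thm:filtration}(iii). The preliminary reduction to $\Omega_A$ via the $D_6$-action is harmless but unnecessary, since the same direct filtration count applies verbatim to all three elements; likewise the remark about $D^2$ not being absorbed into $\Re_3$ is not needed for the congruence itself.
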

\begin{proof}
The lemma is immediate from the forms \eqref{eq:CasA}--\eqref{eq:CasC} of $\Omega_A$, $\Omega_B$, $\Omega_C$ along with Lemma \ref{lem:C=0} and Theorem \ref{thm:filtration}(iii).
\end{proof}

\begin{lem}\label{lem:Omega=D^2}
Let $\Omega$ denote a Casimir element of $\Re$ with 
$\Omega=D^2 \bmod{\Re_3}$. Then 
$$
\Omega^n=D^{2n} \pmod{\Re_{4n-1}}
$$
for all $n\in \N$.
\end{lem}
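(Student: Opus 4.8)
The plan is to prove the claim by induction on $n$, leaning entirely on the multiplicativity of the filtration established in Theorem \ref{thm:filtration}(iii). First I would record the two basic facts that drive the argument: by Definition \ref{def:Rn} the monomial $D^2$ has filtration degree $2\cdot 2=4$, so $D^2\in\Re_4$; and the hypothesis $\Omega=D^2\bmod{\Re_3}$ says precisely that $\Omega=D^2+v$ for some $v\in\Re_3$. Since $\Re_3\subseteq\Re_4$, this also gives $\Omega\in\Re_4$.

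For the base case $n=0$, both sides equal $1$ and $\Re_{-1}=\{0\}$ by Definition \ref{def:Rn}, so the congruence holds trivially; note that the first induction step then reproduces $n=1$, which is exactly the hypothesis. For the inductive step, suppose $\Omega^n=D^{2n}\bmod{\Re_{4n-1}}$, so that $\Omega^n=D^{2n}+u$ for some $u\in\Re_{4n-1}$. Writing $\Omega^{n+1}=\Omega^n\cdot\Omega$ and expanding with $\Omega=D^2+v$ gives
\begin{equation*}
\Omega^{n+1}=D^{2n+2}+D^{2n}v+u D^2+u v.
\end{equation*}
I would then bound each of the three error terms using Theorem \ref{thm:filtration}(iii): $D^{2n}v\in\Re_{4n}\cdot\Re_3=\Re_{4n+3}$, $uD^2\in\Re_{4n-1}\cdot\Re_4=\Re_{4n+3}$, and $uv\in\Re_{4n-1}\cdot\Re_3=\Re_{4n+2}\subseteq\Re_{4n+3}$. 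Since $4(n+1)-1=4n+3$, the sum of these three terms lies in $\Re_{4n+3}$, which yields $\Omega^{n+1}=D^{2n+2}\bmod{\Re_{4(n+1)-1}}$ and completes the induction.

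There is no serious obstacle here; the entire content is the degree bookkeeping, and the only point requiring care is that the error $v=\Omega-D^2$ drops the filtration degree from $4$ to at most $3$, which is exactly what forces every cross term to land strictly below $\Re_{4(n+1)}$. One should double-check that Theorem \ref{thm:filtration}(iii) is being invoked only in the direction $\Re_m\cdot\Re_n\subseteq\Re_{m+n}$ (the inclusion half suffices, and it holds since $\Omega$ is central by Proposition \ref{prop:CasCentral}, so the factor order is immaterial), and that the hypothesis on $\Omega$ is genuinely realized, which is guaranteed for all three of $\Omega_A,\Omega_B,\Omega_C$ by Lemma \ref{lemma:Casmod}.
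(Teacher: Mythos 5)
Your proof is correct and follows essentially the same route as the paper's: an induction on $n$ that writes the difference $\Omega^{n+1}-D^{2n+2}$ as a sum of cross terms and bounds each one using $\Re_m\cdot\Re_n\subseteq\Re_{m+n}$ together with $\Omega-D^2\in\Re_3$ and $D^2\in\Re_4$. The only cosmetic difference is that you expand into three error terms where the paper telescopes into two, and your appeal to centrality for the factor order is unnecessary (the filtration inclusion is order-free); neither affects correctness.
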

\begin{proof}
We proceed by induction on $n$. The case when $n=0$ is trivial.   
Suppose that $n\geq 1$.  
Observe that 
$\Omega^n-D^{2n}$ 
is equal to 
\begin{gather}\label{eq:OmegaD}
\Omega(\Omega^{n-1}-D^{2n-2})+(\Omega-D^2) D^{2n-2}.
\end{gather}
We show that each summand in \eqref{eq:OmegaD} is in $\Re_{4n-1}$.  
By Lemma \ref{lem:D=0}(i) and Lemma \ref{lem:Rn=R1^n}, we know $D^2\in\Re_4$.  Since $\Omega=D^2 \bmod{\Re_3}$, it follows that $\Omega\in\Re_4$.  It then follows from Theorem \ref{thm:filtration}(iii) and the inductive hypothesis that the summand on the left in \eqref{eq:OmegaD} is in $\Re_{4n-1}$. 

By Lemma \ref{lem:D=0}(i) and Lemma \ref{lem:Rn=R1^n}, we know $D^{2n-2}\in\Re_{4n-4}$.  Since $\Omega=D^2 \bmod{\Re_3}$, it follows that $\Omega-D\in\Re_3$. It follows from these facts along with Theorem \ref{thm:filtration}(iii) that $(\Omega-D^2) D^{2n-2}\in \Re_{4n-1}$. 
The result follows.
\end{proof}

\begin{lem}\label{lem:3basisRn/Rn-1}
Let $\Omega$ denote a Casimir element of $\Re$ with 
$\Omega=D^2 \bmod{\Re_3}$. 
For each $n\in \N$, 
 the cosets
\begin{gather*}
A^i D^j B^k
\Omega^\ell
\alpha^r
\delta^s
\beta^t+\Re_{n-1}
\qquad \quad
\begin{split}
&\hbox{for all $i,k,\ell,r,s,t\in \N$ and $j\in \{0,1\}$}
\\
&\hbox{with $i+2j+k+4\ell+r+s=n$}
\end{split}
\end{gather*}
form an $\F$-basis for $\Re_n/\Re_{n-1}$.
\end{lem}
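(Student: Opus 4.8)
The plan is to transport the Poincar\'e--Birkhoff--Witt basis of Lemma \ref{lem:Rn/Rn-1} across the substitution of $\Omega^\ell$ for $D^{2\ell}$. First I would record the bookkeeping bijection between the two index sets: writing $j=2\ell+j'$ with $j'\in\{0,1\}$ identifies each standard six-tuple $(i,j,k,r,s,t)$ obeying $i+2j+k+r+s+t=n$ uniquely with a tuple $(i,j',k,\ell,r,s,t)$ obeying $i+2j'+k+4\ell+r+s+t=n$, and conversely via $j'=j\bmod 2$, $\ell=\lfloor j/2\rfloor$. Thus the proposed family of cosets is indexed in bijection with the basis of Lemma \ref{lem:Rn/Rn-1}. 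It then suffices to prove that, under this bijection, each proposed coset coincides in $\Re_n/\Re_{n-1}$ with its partner standard coset; the two families will then be equal as subsets of $\Re_n/\Re_{n-1}$, so the proposed family coincides with the basis of Lemma \ref{lem:Rn/Rn-1}.

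The core is therefore the congruence
\[
A^i D^{j'} B^k\,\Omega^\ell\,\alpha^r\delta^s\beta^t
\equiv
A^i D^{j'+2\ell} B^k\,\alpha^r\delta^s\beta^t
\pmod{\Re_{n-1}},
\]
valid whenever $i+2j'+k+4\ell+r+s+t=n$. I would prove this in two moves. By Lemma \ref{lem:Omega=D^2} we may write $\Omega^\ell=D^{2\ell}+E$ with $E\in\Re_{4\ell-1}$; substituting this in place and applying the multiplicativity $\Re_a\cdot\Re_b=\Re_{a+b}$ of Theorem \ref{thm:filtration}(iii), the error term $A^i D^{j'}B^k E\,\alpha^r\delta^s\beta^t$ lies in $\Re_{i+2j'+k+(4\ell-1)+r+s+t}=\Re_{n-1}$, so $A^i D^{j'}B^k\Omega^\ell\alpha^r\delta^s\beta^t\equiv A^i D^{j'}B^k D^{2\ell}\alpha^r\delta^s\beta^t$. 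It then remains to pull $D^{2\ell}$ leftward past $B^k$ so as to merge it with $D^{j'}$, again at the cost of terms lying in $\Re_{n-1}$.

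For that last step I would first establish the almost-commutation
\[
D^{p}B^{q}\equiv B^{q}D^{p}\pmod{\Re_{2p+q-1}}
\qquad(p,q\in\N).
\]
Its input is Lemma \ref{lem:D=0}(iii), which together with Lemma \ref{lem:Rn=R1^n} gives $[D,B]\in\Re_1^2=\Re_2$; that is, the commutator of $D$ and $B$ drops exactly one filtration-degree below the degree $3$ of $DB$. Inducting on $p+q$ and using Theorem \ref{thm:filtration}(iii), each swap of an adjacent pair $DB$ replaces a degree-$3$ factor by the degree-$\le 2$ element $[D,B]$, so every resulting error term sits one filtration-degree below $B^q D^p$. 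Taking $p=2\ell$, $q=k$ and inserting the outcome into $A^i D^{j'}B^k D^{2\ell}\alpha^r\delta^s\beta^t$ produces $A^i D^{j'+2\ell}B^k\alpha^r\delta^s\beta^t$ modulo $\Re_{n-1}$, which completes the congruence; the conclusion then follows from Lemma \ref{lem:Rn/Rn-1}. I expect the degree accounting in this almost-commutation step---checking that every commutator correction genuinely lands in $\Re_{2p+q-1}$ and not merely in $\Re_{2p+q}$---to be the only point requiring real care, the remaining manipulations being direct invocations of Lemmas \ref{lem:Omega=D^2}, \ref{lem:D=0} and \ref{lem:Rn=R1^n} together with Theorem \ref{thm:filtration}(iii).
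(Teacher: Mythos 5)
Your proposal is correct and follows the same strategy as the paper: set up the bijection $(i,j,k,r,s,t)\leftrightarrow(i,j\bmod 2,k,\lfloor j/2\rfloor,r,s,t)$ between the two index sets and transport the basis of Lemma \ref{lem:Rn/Rn-1} through the congruence that trades $D^{2\ell}$ for $\Omega^\ell$ via Lemma \ref{lem:Omega=D^2} and Theorem \ref{thm:filtration}(iii). The one place you diverge is the commutation of the $\Omega^\ell$ (resp.\ $D^{2\ell}$) factor past $B^k$: the paper invokes Proposition \ref{prop:CasCentral} to move $\Omega^\ell$ across exactly, whereas you first substitute $D^{2\ell}+E$ with $E\in\Re_{4\ell-1}$ and then prove the almost-commutation $D^pB^q\equiv B^qD^p\pmod{\Re_{2p+q-1}}$ from $[D,B]\in\Re_1^2$ (Lemma \ref{lem:D=0}(iii) with Lemma \ref{lem:Rn=R1^n}); your degree count there is sound, since each telescoping term contains exactly one factor $[D,B]$ of filtration degree $2$ in place of a degree-$3$ pair $DB$. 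This variant costs you an extra induction but has the mild advantage of not needing the centrality of the Casimir element at this point in the argument.
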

\begin{proof}
Let $I$ denote the set consisting of all $(i,j,k,r,s,t)\in \N^6$ with 
$$
i+2j+k+r+s+t=n.
$$ 
Let $J$ denote the set consisting of all $(i,j,k,\ell,r,s,t)\in \N^7$ with 
$$
j\in\{0,1\}
\quad 
\hbox{and} 
\quad 
i+2j+k+4\ell+r+s+t=n.
$$ 
By Proposition \ref{prop:CasCentral} and Lemma \ref{lem:Omega=D^2},  
$$
A^i D^j B^k \alpha^r \delta^s \beta^t
=
A^i D^{j-2\lfloor \frac{j}{2}\rfloor} B^k \Omega^{\lfloor \frac{j}{2}\rfloor}
\alpha^r \delta^s \beta^t
\pmod{\Re_{n-1}}
$$
for all $(i,j,k,r,s,t)\in I$. The map $I\to J$ given by 
\begin{eqnarray*}
(i,j,k,r,s,t) 
&\mapsto &
(i,j-2\lfloor j/2\rfloor,k,\lfloor j/2\rfloor,r,s,t)
\qquad 
\hbox{for all $(i,j,k,r,s,t)\in I$}
\end{eqnarray*}
is a bijection.  Its inverse is the map $J\to I$ that sends 
\begin{eqnarray*}
(i,j,k,\ell, r,s,t) 
&\mapsto &
(i,j+2\ell,k,r,s,t)
\qquad 
\hbox{for all $(i,j,k,\ell,r,s,t)\in J$}.
\end{eqnarray*}
The result follows from these comments along with Lemma \ref{lem:Rn/Rn-1}.
\end{proof}

\begin{lem}\label{lem:3basisRn}
Let $\Omega$ denote a Casimir element of $\Re$ with 
$\Omega=D^2 \bmod{\Re_3}$. 
For each $n\in \N$, the elements
\begin{gather*}
A^i D^j B^k
\Omega^\ell
\alpha^r
\delta^s
\beta^t
\qquad \quad
\begin{split}
&\hbox{for all $i,k,\ell,r,s,t\in \N$ and $j\in \{0,1\}$}
\\
&\hbox{with $i+2j+k+4\ell+r+s\leq n$}
\end{split}
\end{gather*}
form an $\F$-basis for $\Re_n$.
\end{lem}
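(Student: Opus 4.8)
The plan is to obtain the basis of the filtered piece $\Re_n$ by assembling it from the bases of the graded pieces $\Re_m/\Re_{m-1}$ supplied by Lemma \ref{lem:3basisRn/Rn-1}. I would first record the elementary principle of filtered linear algebra: if $0 = V_{-1} \subseteq V_0 \subseteq \cdots \subseteq V_n$ is a chain of $\F$-vector spaces and, for each $0 \le m \le n$, a subset $S_m \subseteq V_m$ satisfies that $\{\,v + V_{m-1} : v \in S_m\,\}$ is an $\F$-basis of $V_m/V_{m-1}$, then $\bigcup_{m=0}^{n} S_m$ is an $\F$-basis of $V_n$. Applied with $V_m = \Re_m$, this reduces the lemma to identifying $S_m$ with the displayed elements of total weight exactly $m$, where by the weight of $A^i D^j B^k \Omega^\ell \alpha^r \delta^s \beta^t$ I mean the quantity $i + 2j + k + 4\ell + r + s$ named in the statement.

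Next I would observe that the displayed elements partition according to their weight: for each fixed $m \in \{0, 1, \ldots, n\}$, the displayed elements of weight $m$ form precisely the set $S_m$ whose cosets constitute a basis of $\Re_m/\Re_{m-1}$ by Lemma \ref{lem:3basisRn/Rn-1}. Since the weight is a nonnegative integer, a displayed element has weight at most $n$ if and only if it has weight $m$ for some $m \in \{0, \ldots, n\}$; hence the full set of elements named in the lemma is exactly $\bigcup_{m=0}^{n} S_m$, and the assembly principle yields that it is a basis of $\Re_n$. I would also check that each $S_m$ genuinely lies in $\Re_m$: the generators $A$, $B$, $\alpha$, $\delta$, $\beta$ lie in $\Re_1$ by Lemma \ref{lem:R1basis}, one has $D \in \Re_2$ by Lemma \ref{lem:D=0}(i) together with Lemma \ref{lem:Rn=R1^n}, and $\Omega \in \Re_4$ since $D^2 \in \Re_4$ and $\Omega - D^2 \in \Re_3 \subseteq \Re_4$; by Theorem \ref{thm:filtration}(iii) a monomial of weight $m$ then lies in $\Re_m$.

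For completeness I would prove the assembly principle by induction on $n$. The base case $n = 0$ is immediate from $\Re_0 = \F 1$. For the inductive step I would assume $\bigcup_{m=0}^{n-1} S_m$ is a basis of $\Re_{n-1}$ and combine this with the fact that the cosets of $S_n$ form a basis of $\Re_n/\Re_{n-1}$. Spanning follows because any $x \in \Re_n$ may be corrected by a suitable element of the span of $S_n$ to land in $\Re_{n-1}$, which is spanned by $\bigcup_{m=0}^{n-1} S_m$ by induction. For linear independence, a vanishing $\F$-linear combination of $\bigcup_{m=0}^{n} S_m$ reduces modulo $\Re_{n-1}$ to a relation among the cosets of $S_n$, forcing those coefficients to vanish by Lemma \ref{lem:3basisRn/Rn-1}; the surviving combination then lies in $\Re_{n-1}$ and its coefficients vanish by the inductive hypothesis.

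I expect no substantive obstacle. The genuinely algebraic input, namely the centrality of $\Omega$ (Proposition \ref{prop:CasCentral}), the leading-term identity $\Omega^\ell \equiv D^{2\ell}$ modulo lower filtration (Lemma \ref{lem:Omega=D^2}), and the re-indexing bijection that converts the monomial basis of Theorem \ref{thm:basisURA} into the $\Omega$-adapted one, has already been discharged inside Lemma \ref{lem:3basisRn/Rn-1}. The remaining work is the routine filtration bookkeeping indicated above, which is the only place requiring any care.
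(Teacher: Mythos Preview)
Your proposal is correct and follows essentially the same approach as the paper: the paper's proof reads in its entirety ``The result follows from Lemma \ref{lem:3basisRn/Rn-1} and a routine induction on $n$,'' and what you have written is precisely that routine induction, spelled out in full together with the filtered-linear-algebra assembly principle that underlies it.
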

\begin{proof}
The result follows from Lemma \ref{lem:3basisRn/Rn-1} and a routine induction on $n$.
\end{proof}

Combining Theorem \ref{thm:filtration}(i) and Lemma \ref{lem:3basisRn} we immediately obtain the following result.

\begin{thm}\label{thm:basisURA2}
Let $\Omega$ denote a Casimir element of $\Re$ with 
$\Omega=D^2 \bmod{\Re_3}$. 
The elements
\begin{gather*}
A^i D^j B^k
\Omega^\ell
\alpha^r
\delta^s
\beta^t
\qquad \quad
\hbox{for all $i,k,\ell,r,s,t\in \N$ and $j\in \{0,1\}$}
\end{gather*}
form an $\F$-basis for $\Re$.
\end{thm}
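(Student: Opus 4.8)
The plan is to lift the family of finite bases provided by Lemma \ref{lem:3basisRn} to a single basis for all of $\Re$, using the exhaustiveness of the filtration from Theorem \ref{thm:filtration}(i). First I would fix a Casimir element $\Omega$ satisfying $\Omega = D^2 \bmod \Re_3$; such an element exists by Lemma \ref{lemma:Casmod} (for instance $\Omega_A$), so that Lemma \ref{lem:3basisRn} is available. That lemma asserts that for every $n \in \N$, the monomials $A^i D^j B^k \Omega^\ell \alpha^r \delta^s \beta^t$ with $j \in \{0,1\}$ and weight $i+2j+k+4\ell+r+s+t \leq n$ form an $\F$-basis of $\Re_n$.

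To prove the spanning property, I would take an arbitrary $u \in \Re$. By Theorem \ref{thm:filtration}(i) we have $\Re = \bigcup_{n \in \N} \Re_n$, so $u \in \Re_n$ for some $n$. Applying Lemma \ref{lem:3basisRn} to this $n$ then writes $u$ as a finite $\F$-linear combination of the listed monomials, which shows that the full collection spans $\Re$.

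For linear independence, I would note that any nontrivial finite $\F$-linear relation among the monomials involves only finitely many of them, and on a finite set the weight $i+2j+k+4\ell+r+s+t$ is bounded, say by $n$; hence all monomials occurring in the relation lie in the single subspace $\Re_n$ by Definition \ref{def:Rn}. But within $\Re_n$ these monomials belong to the basis furnished by Lemma \ref{lem:3basisRn}, so they are $\F$-linearly independent, forcing the relation to be trivial. Together the two parts yield the claimed $\F$-basis of $\Re$.

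I do not expect any genuine obstacle: the statement is a formal passage from a compatible system of filtered bases to a global basis, and the substantive work was already carried out in Lemma \ref{lem:3basisRn}. The only points needing minor care are the observations that every element of $\Re$ lies in some $\Re_n$ and that every finite subfamily of monomials lands inside a common filtration level, which are exactly the content of Theorem \ref{thm:filtration}(i) together with the weight bound in Definition \ref{def:Rn}.
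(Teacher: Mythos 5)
Your proposal is correct and follows exactly the route the paper takes: the paper derives Theorem \ref{thm:basisURA2} immediately by combining Theorem \ref{thm:filtration}(i) with Lemma \ref{lem:3basisRn}, and your write-up simply spells out the standard spanning and independence arguments that this combination entails.
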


\begin{cor}\label{cor:linindep_Cas&abc}
Let $\Omega$ denote a Casimir element of $\Re$ with 
$\Omega=D^2 \bmod{\Re_3}$. 
Then the elements
\begin{gather*}
\Omega^\ell
\alpha^r
\delta^s
\beta^t
\qquad \quad
\hbox{for all $\ell,r,s,t\in \N$}
\end{gather*}
are linearly independent over $\F$.
\end{cor}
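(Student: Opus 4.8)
The plan is to obtain this statement as an immediate specialization of Theorem~\ref{thm:basisURA2}. That theorem asserts that, for any Casimir element $\Omega$ with $\Omega=D^2\bmod\Re_3$, the elements
$$
A^i D^j B^k\,\Omega^\ell\,\alpha^r\,\delta^s\,\beta^t
\qquad
\hbox{for all $i,k,\ell,r,s,t\in\N$ and $j\in\{0,1\}$}
$$
form an $\F$-basis of $\Re$, and hence in particular are linearly independent over $\F$. Since the hypothesis on $\Omega$ in the present corollary is identical to the hypothesis of that theorem, the basis is available to us directly.

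First I would observe that restricting this family to the subcollection with $i=j=k=0$ produces exactly the elements $\Omega^\ell\alpha^r\delta^s\beta^t$ for $\ell,r,s,t\in\N$; note that $j=0$ is a legal value of the index $j\in\{0,1\}$, so every such element genuinely appears in the basis. Because the basis property guarantees that distinct index tuples $(i,j,k,\ell,r,s,t)$ label distinct basis elements, the assignment $(\ell,r,s,t)\mapsto\Omega^\ell\alpha^r\delta^s\beta^t$ is injective, and its image is a subset of the basis from Theorem~\ref{thm:basisURA2}.

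Since every subset of a linearly independent set is itself linearly independent, it follows that the family $\Omega^\ell\alpha^r\delta^s\beta^t$ (for all $\ell,r,s,t\in\N$) is linearly independent over $\F$, which is the desired conclusion. There is essentially no obstacle here: the entire combinatorial content of the corollary has already been packaged into the Poincar\'e--Birkhoff--Witt basis of Theorem~\ref{thm:basisURA2}, and the proof amounts to selecting the sub-family indexed by $i=j=k=0$.
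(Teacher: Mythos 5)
Your proof is correct and matches the paper's intent exactly: the corollary is stated in the paper as an immediate consequence of Theorem~\ref{thm:basisURA2} (with no further argument given), obtained precisely by restricting the basis to the sub-family with $i=j=k=0$ and using the fact that a subset of a linearly independent set is linearly independent. Nothing is missing.
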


\begin{lem}\label{lem:Cas_algind}
Each Casimir element of $\Re$ is algebraically independent over $\Ce$.
\end{lem}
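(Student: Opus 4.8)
The plan is to first establish the statement for one particular Casimir element and then propagate it to the entire Casimir class by a change of variables. First I would pin down what algebraic independence means here: since $\Omega$ and every element of $\Ce$ are central in $\Re$ (Proposition \ref{prop:CasCentral} and Lemma \ref{lem:alp&bet&gam&del}(ii)), the evaluation map $\Ce[x]\to\Re$ sending $x\mapsto\Omega$ is a well-defined $\Ce$-algebra homomorphism, and $\Omega$ is algebraically independent over $\Ce$ precisely when this map is injective, equivalently when the powers $\{\Omega^\ell\}_{\ell\in\N}$ are linearly independent over $\Ce$.

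Next I would dispatch the case of a Casimir element $\Omega_0$ satisfying $\Omega_0=D^2\bmod{\Re_3}$; for instance $\Omega_0=\Omega_A$, which is a Casimir element by Proposition \ref{prop:CasABACas} and satisfies the congruence by Lemma \ref{lemma:Casmod}. By Lemma \ref{lem:Cebasis} the monomials $\alpha^r\delta^s\beta^t$ form an $\F$-basis of $\Ce$, so expanding each coefficient in this basis shows that $\{\Omega_0^\ell\}_{\ell\in\N}$ is linearly independent over $\Ce$ if and only if the family $\{\Omega_0^\ell\alpha^r\delta^s\beta^t : \ell,r,s,t\in\N\}$ is linearly independent over $\F$. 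The latter is exactly Corollary \ref{cor:linindep_Cas&abc}, so $\Omega_0$ is algebraically independent over $\Ce$.

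Finally I would pass to an arbitrary Casimir element $\Omega$. By Definition \ref{defn:CasClass} together with Proposition \ref{prop:CasABACas}, the Casimir class is the single coset $\Omega_A+\Ce$, so $\Omega=\Omega_A+c$ for some $c\in\Ce$. The substitution $x\mapsto x+c$ is a $\Ce$-algebra automorphism of $\Ce[x]$, with inverse $x\mapsto x-c$, and hence carries every nonzero polynomial to a nonzero polynomial. If some nonzero $p\in\Ce[x]$ satisfied $p(\Omega)=0$, then $q(x)=p(x+c)$ would be a nonzero element of $\Ce[x]$ with $q(\Omega_A)=p(\Omega_A+c)=p(\Omega)=0$, contradicting the algebraic independence of $\Omega_A$ over $\Ce$ established above. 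Therefore $\Omega$ is algebraically independent over $\Ce$.

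The one real subtlety, and the reason one cannot simply quote Corollary \ref{cor:linindep_Cas&abc} for a general $\Omega$, is that an arbitrary Casimir element need not satisfy $\Omega=D^2\bmod{\Re_3}$: adding, say, $\alpha^4\in\Ce$ to $\Omega_A$ already breaks the congruence, since $\alpha^4\notin\Re_3$. The translation argument is precisely what removes this obstruction, and centrality of $\Omega$ is what guarantees that the evaluation map is an algebra homomorphism, so that the substitution $x\mapsto x+c$ is legitimate.
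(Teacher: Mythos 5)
Your proof is correct and follows essentially the same route as the paper: establish the claim for a Casimir element congruent to $D^2$ modulo $\Re_3$ (such as $\Omega_A$) via Lemma \ref{lem:Cebasis} and Corollary \ref{cor:linindep_Cas&abc}, then transfer it to an arbitrary Casimir element using the fact that any two Casimir elements differ by an element of $\Ce$. The only difference is that you spell out the translation $x\mapsto x+c$ explicitly, which the paper leaves implicit.
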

\begin{proof}
Let $\Omega$ denote a Casimir element of $\Re$ with $\Omega=D^2\bmod{\Re_3}$. 
By Lemma \ref{lem:Cebasis} and Corollary \ref{cor:linindep_Cas&abc}, the corollary holds for $\Omega$. By Definition \ref{defn:CasClass}, any two  Casimir elements of $\Re$ differ by an element of $\Ce$. The result follows from these comments.
\end{proof}

\begin{lem}\label{lem:[A,ADB]}
For all $i,j,k\in \N$, the following equations hold in $\Re$:
\begin{enumerate}
\item
$
[A,A^i D^j B^k] =
\left\{ 
\begin{array}{ll}
0
\pmod{\Re_{i+2j+k}}
\qquad &\hbox{if $k=0$},
\\
2k  A^{i} D^{j+1} B^{k-1}
\pmod{\Re_{i+2j+k}}
\qquad &\hbox{if $k\not=0$}.
\end{array}
\right.
$

\item
$
[B,A^i D^j B^k] =
\left\{ 
\begin{array}{ll}
0
\pmod{\Re_{i+2j+k}}
\qquad &\hbox{if $i=0$},
\\
-2i  A^{i-1} D^{j+1} B^{k}
\pmod{\Re_{i+2j+k}}
\qquad &\hbox{if $i\not=0$}.
\end{array}
\right.
$
\end{enumerate}
\end{lem}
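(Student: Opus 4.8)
The plan is to derive part (i) directly from Lemma \ref{lem:ADBA} and then obtain part (ii) from part (i) by applying the antiautomorphism $\sigma$ of Proposition \ref{prop:D6}.

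For part (i), I would expand the commutator as
$$
[A, A^i D^j B^k] = A^{i+1} D^j B^k - A^i D^j B^k A
$$
and substitute the normal form for $A^i D^j B^k A$ furnished by Lemma \ref{lem:ADBA}. When $k=0$, that lemma gives $A^i D^j A \equiv A^{i+1} D^j \pmod{\Re_{i+2j}}$, so the two terms cancel modulo $\Re_{i+2j}=\Re_{i+2j+k}$ and the commutator is $0$ there. When $k\neq 0$, the lemma gives $A^i D^j B^k A \equiv A^{i+1} D^j B^k - 2k A^i D^{j+1} B^{k-1} \pmod{\Re_{i+2j+k}}$, and subtracting this from $A^{i+1} D^j B^k$ leaves exactly $2k A^i D^{j+1} B^{k-1}$ modulo $\Re_{i+2j+k}$. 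This is the asserted formula.

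For part (ii), I would apply $\sigma$ to the identities of part (i). Recall that $\sigma$ is an $\F$-algebra antiautomorphism interchanging $A$ and $B$ and fixing $D$, and that each $\Re_n$ is $\sigma$-invariant by Lemma \ref{lem:Rn&D6}, so $\sigma$ carries a congruence modulo $\Re_{i+2j+k}$ to a congruence modulo $\Re_{i+2j+k}$. Because $\sigma$ reverses products, $\sigma(A^i D^j B^k)=A^k D^j B^i$, and for a commutator $\sigma([A,X]) = \sigma(X)\sigma(A)-\sigma(A)\sigma(X) = [\sigma(X),B] = -[B,\sigma(X)]$. Applying $\sigma$ to the identity of part (i) therefore produces an identity for $[B, A^k D^j B^i]$ modulo $\Re_{i+2j+k}$, with the leading coefficient turned from $+2k$ into $-2k$ by the sign in $\sigma([A,X])=-[B,\sigma(X)]$; since $i$ and $k$ range independently over $\N$, renaming the exponents gives precisely the statement of part (ii).

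The argument has no deep obstacle, since both parts reduce to normal-form results already established. The step demanding the most care is the bookkeeping in part (ii): one must correctly track the order reversal under $\sigma$ (so that $A^i D^j B^k$ becomes $A^k D^j B^i$), invoke the $\sigma$-invariance of $\Re_{i+2j+k}$ to transport the congruence, and then match the relabelled indices and the sign against the claimed formula. An alternative but more computational route would be a direct induction on $i$ mirroring the proof of Lemma \ref{lem:ADBA}; this would first require a companion normal form for $B A^i D^j B^k$, which the $\sigma$-symmetry lets us bypass.
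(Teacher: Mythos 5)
Your proposal is correct and follows essentially the same route as the paper: part (i) from the normal form in Lemma \ref{lem:ADBA} (together with the identification $\Re_1^n=\Re_n$ from Lemma \ref{lem:Rn=R1^n}), and part (ii) by applying the antiautomorphism $\sigma$ and using the $\sigma$-invariance of $\Re_{i+2j+k}$ from Lemma \ref{lem:Rn&D6}. Your sign and index bookkeeping under $\sigma$ is accurate, so nothing further is needed.
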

\begin{proof}
Assertion (i) follows from Lemma \ref{lem:ADBA} and Lemma \ref{lem:Rn=R1^n}. 
Recall from Lemma \ref{lem:Rn&D6} that $\Re_{i+2j+k}$ is invariant under $\sigma$. We obtain (ii)  by applying $\sigma$ to (i).
\end{proof}

\begin{prop}\label{prop:Z(R)capRn}
Assume that ${\rm char}\,\F=0$. 
Let $\Omega$ denote a Casimir element of $\Re$ with 
$\Omega=D^2 \bmod{\Re_3}$. 
For each $n\in \N$, the elements 
\begin{gather}\label{e:basisZ&Rn}
\Omega^\ell \alpha^r \delta^s \beta^t
\qquad 
\hbox{for all $\ell,r,s,t\in \N$ with $4\ell+r+s+t\leq n$}
\end{gather}
form an $\F$-basis for $Z(\Re)\cap \Re_n$.
\end{prop}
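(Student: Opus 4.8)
The plan is to dispose of the three ``easy'' properties of the listed monomials and then reduce the statement to a spanning assertion, which carries the real content. Each $\Omega^\ell\alpha^r\delta^s\beta^t$ is central, since $\Omega$ is central by Proposition~\ref{prop:CasCentral} and $\alpha,\delta,\beta$ are central by Lemma~\ref{lem:alp&bet&gam&del}(ii); from $\Omega=D^2\bmod\Re_3$ one gets $\Omega\in\Re_4\setminus\Re_3$, so Theorem~\ref{thm:filtration}(iii) gives $\Omega^\ell\alpha^r\delta^s\beta^t\in\Re_{4\ell+r+s+t}$, which lies in $\Re_n$ exactly when $4\ell+r+s+t\le n$; and these elements are $\F$-linearly independent by Corollary~\ref{cor:linindep_Cas&abc}. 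Hence the listed set is a linearly independent central subset of $\Re_n$, and only spanning remains. Since $\Re$ is generated by $A,B,C$ and $C=\delta-A-B$ with $\delta$ central, an element is central if and only if it commutes with both $A$ and $B$.

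I would prove spanning by induction on $n$, regarding $\Re$ as an algebra over the central polynomial ring $\Ce[\Omega]=\F[\alpha,\delta,\beta,\Omega]$ (polynomial by Lemmas~\ref{lem:Cebasis} and~\ref{lem:Cas_algind}); by Theorem~\ref{thm:basisURA2} it is free over $\Ce[\Omega]$ on the monomials $A^iD^jB^k$ with $j\in\{0,1\}$. Given $z\in Z(\Re)\cap\Re_n$, I expand it via Lemma~\ref{lem:3basisRn} as $z=\sum c_{ijk\ell rst}A^iD^jB^k\Omega^\ell\alpha^r\delta^s\beta^t$, the goal being $c_{ijk\ell rst}=0$ whenever $(i,j,k)\ne(0,0,0)$. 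As $\Omega,\alpha,\delta,\beta$ are central, $[A,z]$ and $[B,z]$ are obtained by applying $[A,\,\cdot\,]$ and $[B,\,\cdot\,]$ only to the factors $A^iD^jB^k$. Reading the top, $(n+1)$-st, degree of $[A,z]=0$ and $[B,z]=0$ through Lemma~\ref{lem:[A,ADB]}, whose assignments $A^iD^jB^k\mapsto A^iD^{j+1}B^{k-1}$ and $A^iD^jB^k\mapsto A^{i-1}D^{j+1}B^k$ are injective on normal-form monomials once $D^2$ is rewritten as $\Omega$, forces every degree-$n$ term of $z$ with $k\ge1$ or $i\ge1$ to vanish. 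This is the leading-term bookkeeping already used in Proposition~\ref{prop:no0divisor}.

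The degree-$n$ part of $z$ is then $\sum a_{\ell rst}\Omega^\ell\alpha^r\delta^s\beta^t+\sum b_{\ell rst}D\Omega^\ell\alpha^r\delta^s\beta^t$. The first sum is central, so after subtracting it I may assume the top part is the ``$D$-odd'' piece $\sum b_{\ell rst}D\Omega^\ell\alpha^r\delta^s\beta^t$; it then suffices to show all $b_{\ell rst}=0$, after which $z\in\Re_{n-1}\cap Z(\Re)$ and the inductive hypothesis finishes. The decisive input is $[A,D]=A^2+2AB-A\delta-2D+\alpha$ from \eqref{eq:pres2-2}: each $D\Omega^\ell\alpha^r\delta^s\beta^t$ contributes a summand $A^2\Omega^\ell\alpha^r\delta^s\beta^t$ to $[A,z]$. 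When $\ell=0$ the monomial $A^2\alpha^r\delta^s\beta^t$ cannot be produced by the leading term of any commutator $[A,\,\cdot\,]$, since those always carry a positive power of $D$ or $\Omega$, so matching its coefficient in $[A,z]=0$ yields $b_{0rst}=0$. For $\ell\ge1$ the same coefficient also receives a contribution from the lower monomial $A^2DB\Omega^{\ell-1}\alpha^r\delta^s\beta^t$, so I would propagate the vanishing by a downward induction on the power $\ell$ of $\Omega$, using the companion coefficients of $[B,z]=0$ in tandem.

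The main obstacle is precisely this last parity step. The associated-graded/leading-term machinery only sees the generators $A$ and $B$: in $\mathrm{gr}\,\Re$ the symbol of $D$ is central, so $\bar D,\bar D^3,\dots$ are central symbols and the graded argument is completely blind to the parity of the $D$-degree. Excluding the monomials $D\Omega^\ell\alpha^r\delta^s\beta^t$ genuinely requires the sub-leading part of $[A,D]$ together with a careful cross-degree analysis, because for $\ell\ge1$ the otherwise un-matchable $A^2$-coefficient is contaminated by lower-degree monomials and one must run the coefficient comparison simultaneously with the descent just described. A conceptually clean reformulation of the same fact is that the order-two antiautomorphism $\tau^3$ of Proposition~\ref{prop:D6}---which fixes $A,B,C,\alpha,\beta,\gamma,\delta$, sends $D\mapsto-D$, and fixes $\Omega$---acts as the identity on $Z(\Re)$; proving this is tantamount to the evenness of the $D$-degree and can be folded into the same induction. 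Everything outside this parity step is the routine filtered-algebra computation developed in Sections 6 and 7.
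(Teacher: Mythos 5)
Your argument follows the paper's proof of Proposition~\ref{prop:Z(R)capRn} step for step: the same reduction to the spanning claim, the same induction on $n$, the same use of $[A,\cdot]$ and $[B,\cdot]$ with Lemma~\ref{lem:[A,ADB]} to kill the degree-$n$ coefficients with $i\neq0$ or $k\neq0$, the same subtraction of the central piece $S$, and the same idea of extracting the coefficient of $A^2\Omega^\ell\alpha^r\delta^s\beta^t$ produced by $[A,D]$. The one place where you depart from the paper is the one place where you stop proving and start planning. You correctly observe that for $\ell\geq1$ the normal-form monomial $A^2\Omega^\ell\alpha^r\delta^s\beta^t$ also receives the contribution $2\,c(2,1,1,\ell-1,r,s,t)$ from the degree-$(n-1)$ term $A^2DB\,\Omega^{\ell-1}\alpha^r\delta^s\beta^t$ of $z$, since $[A,A^2DB]\equiv 2A^2D^2\equiv 2A^2\Omega$ modulo lower order. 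But your proposed remedy --- ``propagate the vanishing by a downward induction on $\ell$, using the companion coefficients of $[B,z]=0$ in tandem'' --- is not an argument. The contaminating quantities $c(2,1,1,\ell-1,r,s,t)$ (and $c(1,1,2,\ell-1,r,s,t)$ from the $[B,\cdot]$ equation, $c(0,0,1,\ell,r,s,t)$ from the $D\Omega^\ell$-coefficient, etc.) are coefficients of the degree-$(n-1)$ part of $z$, which is not itself central; they are controlled neither by the outer induction on $n$ nor by any induction on $\ell$. Your base case is $\ell=0$, your induction is called ``downward,'' and the relation you obtain does not express $b_{\ell rst}$ in terms of $b_{\ell-1,rst}$ but in terms of these unrelated unknowns; each further coefficient comparison introduces new unknowns, and you never exhibit a closed, solvable linear system. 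So the decisive step --- excluding $D\Omega^\ell\alpha^r\delta^s\beta^t$ with $\ell\geq1$ from the top of a central element --- is left open, and you say as much yourself.

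For comparison, the paper at this exact point asserts that the coefficient of $A^2\Omega^\ell\alpha^r\delta^s\beta^t$ in (\ref{e:rightR'}) is zero. That assertion is immediate only for $\ell=0$: the $j=0$ terms of (\ref{e:rightR'}) all carry a factor of $D$, and the $j=1$ terms, once $D^2$ is rewritten as $\Omega$ modulo $\Re_{n-1}$, carry $\Omega^{\ell'+1}$ with $\ell'+1\geq1$, which is exactly the contamination you noticed. So you have put your finger on a genuine pressure point in the published argument --- but identifying the obstacle is not overcoming it, and reformulating it as the statement that $\tau^3$ fixes $Z(\Re)$ does not advance matters either, since that is precisely what must be proved. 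To complete the proof along these lines you would need to actually justify the vanishing of the contaminating degree-$(n-1)$ coefficients, or find an independent argument for the evenness of the $D$-degree of central elements. As written, the proposal is incomplete.
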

\begin{proof}
By Lemma \ref{lem:alp&bet&gam&del}(ii) and Proposition \ref{prop:CasCentral}, the elements in (\ref{e:basisZ&Rn}) are central in $\Re$. By Lemma \ref{lem:3basisRn}, the elements in (\ref{e:basisZ&Rn}) are contained in $\Re_n$. Hence the elements in (\ref{e:basisZ&Rn}) are contained in $Z(\Re)\cap \Re_n$. 

The elements in \eqref{e:basisZ&Rn} are linearly independent over $\F$ by Corollary \ref{cor:linindep_Cas&abc}.  All that remains to show is that the elements in \eqref{e:basisZ&Rn} span  $Z(\Re)\cap \Re_n$.  We proceed by induction on $n$.  The case when $n=0$ is trivial. 
Assume that $n\geq 1$. Let $R\in Z(\Re)\cap \Re_n$ be given. For $0\leq m\leq n$, let $I(m)$ denote the set of all 
$(i,j,k,\ell,r,s,t)\in \N^7$ with 
$$
j\in \{0,1\} 
\quad \hbox{and}
\quad 
i+2j+k+4\ell+r+s+t=m.
$$  
For $(i,j,k,\ell,r,s,t)\in I(m)$, we let $c(i,j,k,\ell,r,s,t)$ denote the coefficient of $A^i D^j B^k \Omega^\ell \alpha^r \delta^s \beta^t$ in $R$ with respect to the  basis for $\Re_n$ given in Lemma \ref{lem:3basisRn}. Using this notation, we can write 
\begin{gather}\label{e:R}
R=\sum_{m=0}^n
\;
\sum_{(i,j,k,\ell,r,s,t)\in I(m)} 
c(i,j,k,\ell,r,s,t)
A^i D^j B^k \Omega^\ell \alpha^r \delta^s \beta^t.
\end{gather}
We will show that the coefficient $c(i,j,k,\ell,r,s,t)\in I(n)$ is equal to zero if $(i,j,k)\neq (0,0,0)$.  
Observe that  the commutator $[A,R]=0$ since $R\in Z(\Re)$. Consequently,  
\begin{gather}\label{e:[A,R]}
\sum_{(i,j,k,\ell,r,s,t)\in I(n)} 
c(i,j,k,\ell,r,s,t)
[A,A^i D^j B^k] \Omega^\ell \alpha^r \delta^s \beta^t=0
\pmod{\Re_{n}}.
\end{gather}
Using Lemma \ref{lem:[A,ADB]}(i) to simplify the left-hand side of (\ref{e:[A,R]}) yields 
\begin{gather}\label{e2:[A,R]}
\sum_{(i,j,k,\ell,r,s,t)\in I(n)\atop k\not=0}
2k\cdot c(i,j,k,\ell,r,s,t) A^{i}  D^{j+1} B^{k-1}
\Omega^\ell \alpha^r \delta^s \beta^t
=0
\pmod{\Re_{n}}.
\end{gather}
Since $j\in\{0,1\}$ and $\Omega=D^2\bmod{\Re_3}$, the equation (\ref{e2:[A,R]}) can be written as 
\begin{align*}
&\sum_{(i,0,k,\ell,r,s,t)\in I(n)\atop k\not=0}
2k\cdot c(i,0,k,\ell,r,s,t) A^{i}  D B^{k-1}
\Omega^\ell \alpha^r \delta^s \beta^t
\\
&\qquad +
\sum_{(i,1,k,\ell,r,s,t)\in I(n)\atop k\not=0}
2k\cdot c(i,1,k,\ell,r,s,t) A^{i}  B^{k-1}
\Omega^{\ell+1} \alpha^r \delta^s \beta^t=0 \pmod{\Re_n}.
\end{align*}
It follows from Lemma \ref{lem:3basisRn/Rn-1} and ${\rm char}\,\F=0$ that 
\begin{gather*}
c(i,j,k,\ell,r,s,t)=0 \qquad \hbox{for all $(i,j,k,\ell,r,s,t)\in I(n)$ with $k\not=0$}.
\end{gather*}
A similar argument concerning $[B,R]$ shows that  
\begin{gather*}
c(i,j,k,\ell,r,s,t)=0 \qquad \hbox{for all $(i,j,k,\ell,r,s,t)\in I(n)$ with $i\not=0$}.
\end{gather*}
It now remains to show that $(0,j,0,\ell,r,s,t)\in I(n)$ is equal to zero if $j\neq 0$.  
Let
\begin{gather}\label{e:S}
S=\sum_{(0,0,0,\ell,r,s,t)\in I(n)} 
c(0,0,0,\ell,r,s,t)
\Omega^\ell \alpha^r \delta^s \beta^t.
\end{gather}
By Proposition \ref{prop:CasCentral}, $S\in Z(\Re)$. 
By the above comments, we see that $R-S\in Z(\Re)$ and $R-S$ is equal to 
\begin{align}\label{e:R=S+..}
\sum_{(0,1,0,\ell,r,s,t)\in I(n)} 
c(0,1,0,\ell,r,s,t)
D \Omega^\ell \alpha^r \delta^s \beta^t
+
\sum_{m=0}^{n-1}\sum_{I(m)} 
c(i,j,k,\ell,r,s,t)
A^i D^j B^k \Omega^\ell \alpha^r \delta^s \beta^t.
\end{align}
Since $R-S\in Z(\Re)$, the commutator $[R-S,A]=0$. It follows that 
\begin{align}\label{e:R'1}
\sum_{(0,1,0,\ell,r,s,t)\in I(n)}
c(0,1,0,\ell,r,s,t)[D,A]\Omega^\ell \alpha^r \delta^s \beta^t 
\end{align}
and
\begin{align}\label{e:R'2}
\sum_{(i,j,k,\ell,r,s,t)\in I(n-1)}
c(i,j,k,\ell,r,s,t)
[A,A^i D^j B^k] \Omega^\ell \alpha^r \delta^s \beta^t.
\end{align}
are congruent modulo $\Re_{n-1}$.
By Proposition \ref{prop:presentation2}, the expression in (\ref{e:R'1}) is equal to 
\begin{gather}\label{e:leftR'}
\sum_{(0,1,0,\ell,r,s,t)\in I(n)}
c(0,1,0,\ell,r,s,t)(A\delta-A^2-2AB+2D)\Omega^\ell \alpha^r \delta^s \beta^t
\pmod{\Re_{n-1}}.
\end{gather}
By Lemma \ref{lem:[A,ADB]}(i), the expression in (\ref{e:R'2}) is equal to 
\begin{gather}\label{e:rightR'}
\sum_{(i,j,k,\ell,r,s,t)\in I(n-1)\atop k\not=0}
2k\cdot 
c(i,j,k,\ell,r,s,t)
A^i D^{j+1} B^{k-1}
\Omega^\ell \alpha^r \delta^s \beta^t
\pmod{\Re_{n-1}}.
\end{gather}
Let $\ell,r,s,t\in \N$ with $4\ell+r+s+t=n-2$ be given. Observe that the coefficient of 
$A^2 \Omega^\ell \alpha^r\delta^s\beta^t$
in (\ref{e:leftR'}) is 
$$
-c(0,1,0,\ell,r,s,t)
$$ 
and that the coefficient of 
$A^2 \Omega^\ell \alpha^r\delta^s\beta^t$ in (\ref{e:rightR'}) is zero. 
By Lemma \ref{lem:3basisRn/Rn-1}, 
this implies  that  
$$
c(0,1,0,\ell,r,s,t)=0
\qquad 
\hbox{for all $(0,1,0,\ell,r,s,t)\in I(n)$}.
$$
In other words, the sum on the left in (\ref{e:R=S+..}) is zero. Hence we obtain that 
$$
R-S\in  \Re_{n-1}. 
$$
By the inductive hypothesis, $R-S$ is an $\F$-linear combination of $\Omega^\ell \alpha^r \delta^s \beta^t$ for all $\ell,r,s,t\in \N$ with $4\ell+r+s+t\leq n-1$. Combined with (\ref{e:S}), these comments imply that  $R$ is an $\F$-linear combination of the elements in (\ref{e:basisZ&Rn}). The result follows.
\end{proof}

\begin{thm}\label{thm:Z(R)}
Assume that ${\rm char}\,\F=0$. 
Let $\Omega$ denote a Casimir element of $\Re$ with $\Omega=D^2 \bmod{\Re_3}$. Then the elements 
$$
\Omega^\ell \alpha^r \delta^s \beta^t
\qquad 
\hbox{for all $\ell,r,s,t\in \N$}
$$ 
form an $\F$-basis for $Z(\Re)$. 
\end{thm}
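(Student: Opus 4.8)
The plan is to obtain this theorem as the ``union over $n$'' of Proposition \ref{prop:Z(R)capRn}, which has already done all of the substantive work by producing an explicit $\F$-basis for each filtered slice $Z(\Re)\cap \Re_n$. Since the proposed basis for $Z(\Re)$ is exactly the set obtained by removing the degree constraint $4\ell+r+s+t\leq n$ from that slice basis, the theorem should reduce to combining Proposition \ref{prop:Z(R)capRn} with the exhaustion property $\Re=\bigcup_{n\in\N}\Re_n$ from Theorem \ref{thm:filtration}(i).

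First I would dispose of the two easy halves. Linear independence of the elements $\Omega^\ell\alpha^r\delta^s\beta^t$ over $\F$ is immediate from Corollary \ref{cor:linindep_Cas&abc}, which applies precisely because our chosen $\Omega$ satisfies $\Omega=D^2\bmod\Re_3$. That these elements are central follows since $\alpha,\beta,\delta$ are central by Lemma \ref{lem:alp&bet&gam&del}(ii) and $\Omega$ is central by Proposition \ref{prop:CasCentral}, so any product of them lies in $Z(\Re)$. Thus the proposed set is a linearly independent family contained in $Z(\Re)$, and it only remains to verify that it spans $Z(\Re)$.

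For the spanning claim I would argue as follows. Let $R\in Z(\Re)$ be arbitrary. By Theorem \ref{thm:filtration}(i) we have $\Re=\bigcup_{n\in\N}\Re_n$, so there exists $n\in\N$ with $R\in\Re_n$; consequently $R\in Z(\Re)\cap\Re_n$. Applying Proposition \ref{prop:Z(R)capRn} to this $n$, the element $R$ is an $\F$-linear combination of the elements $\Omega^\ell\alpha^r\delta^s\beta^t$ satisfying $4\ell+r+s+t\leq n$, each of which appears in the unrestricted family. Hence $R$ lies in the $\F$-span of the proposed basis, completing the spanning argument and thereby the proof.

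There is no genuine obstacle here: the filtration machinery of Section 6 and the inductive computation inside Proposition \ref{prop:Z(R)capRn} carry the entire burden, and the present statement is essentially a passage to the limit. The only point requiring care is the standing hypothesis that $\Omega=D^2\bmod\Re_3$; this is not vacuous, being guaranteed for each of $\Omega_A,\Omega_B,\Omega_C$ by Lemma \ref{lemma:Casmod}, and since any two Casimir elements differ by an element of $\Ce$ one can always normalize to such an $\Omega$. I would note that this last remark is what ultimately justifies the abstract phrasing ``any Casimir element $\Omega$'' in the introduction, via Lemma \ref{lem:Cas_algind}.
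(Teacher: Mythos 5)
Your proposal is correct and matches the paper's proof, which is exactly the one-line combination of Theorem \ref{thm:filtration}(i) (exhaustion $\Re=\bigcup_n\Re_n$) with the slice bases of Proposition \ref{prop:Z(R)capRn}. The extra remarks on independence and centrality are already contained in that proposition, so nothing further is needed.
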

\begin{proof}
Use Theorem \ref{thm:filtration}(i) and Proposition \ref{prop:Z(R)capRn}.
\end{proof}

We conclude the present paper with two consequences of Theorem \ref{thm:Z(R)}.

\begin{lem}\label{lem:Z(R)=poly}
Assume that ${\rm char}\,\F=0$. Let $\Omega$ denote a Casimir element of $\Re$. Then $Z(\Re)=\Ce[\Omega]$.
\end{lem}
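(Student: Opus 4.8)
The plan is to reduce the statement for an arbitrary Casimir element $\Omega$ to the special case already handled by Theorem \ref{thm:Z(R)}, which concerns a Casimir element congruent to $D^2$ modulo $\Re_3$. First I would fix a convenient reference element. By Lemma \ref{lemma:Casmod}, the $D_6$-symmetric Casimir element $\Omega_A$ satisfies $\Omega_A = D^2 \bmod \Re_3$, so Theorem \ref{thm:Z(R)} applies to it and tells us that the elements $\Omega_A^\ell \alpha^r \delta^s \beta^t$, for all $\ell,r,s,t\in\N$, form an $\F$-basis of $Z(\Re)$. Since $\Ce$ is spanned over $\F$ by the monomials $\alpha^r\delta^s\beta^t$ by Lemma \ref{lem:Cebasis}, these basis elements are precisely the $\Ce$-multiples of the powers $\Omega_A^\ell$; hence their $\F$-span is exactly $\Ce[\Omega_A]$, and we conclude $Z(\Re) = \Ce[\Omega_A]$.

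Next I would pass from $\Omega_A$ to a general Casimir element $\Omega$. By Definition \ref{defn:CasClass}, the Casimir class is a single coset of $\Ce$, so there is an element $c \in \Ce$ with $\Omega = \Omega_A + c$. The key elementary observation is that adjoining $\Omega$ to $\Ce$ produces the same subalgebra as adjoining $\Omega_A$: since $c \in \Ce$, we have $\Omega = \Omega_A + c \in \Ce[\Omega_A]$ and $\Omega_A = \Omega - c \in \Ce[\Omega]$, which gives the two inclusions $\Ce[\Omega]\subseteq \Ce[\Omega_A]$ and $\Ce[\Omega_A]\subseteq\Ce[\Omega]$, hence $\Ce[\Omega] = \Ce[\Omega_A]$. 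Combining this with the previous paragraph yields $Z(\Re) = \Ce[\Omega_A] = \Ce[\Omega]$, as desired.

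There is essentially no hard step here; the only point requiring care is that an arbitrary Casimir element need not itself satisfy $\Omega = D^2 \bmod \Re_3$ (for instance $\Omega_A + \alpha^4$ does not, as $\alpha^4 \notin \Re_3$), so Theorem \ref{thm:Z(R)} cannot be invoked directly for $\Omega$. This is exactly why the argument routes through the fixed element $\Omega_A$ and then uses the coset description of the Casimir class to transfer the conclusion back to $\Omega$.
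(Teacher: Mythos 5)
Your proposal is correct and follows essentially the same route as the paper's own proof: invoke Lemma \ref{lemma:Casmod} and Theorem \ref{thm:Z(R)} to get $Z(\Re)=\Ce[\Omega_A]$, then use the fact that any two Casimir elements differ by an element of $\Ce$ to conclude $\Ce[\Omega]=\Ce[\Omega_A]$. You simply spell out the last step (the two inclusions) more explicitly than the paper does.
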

\begin{proof}
By Lemma \ref{lemma:Casmod}, $\Omega_A=D^2\bmod{\Re_3}$.  By Theorem \ref{thm:Z(R)}, we have $Z(\Re)=\Ce[\Omega_A]$.  Recall that any two Casimir elements differ by an element of $\Ce$.  The result follows from these comments.  
\end{proof}

\begin{lem}
Assume that ${\rm char}\,\F=0$. With reference to Proposition \ref{prop:D6},  $Z(\Re)$ is invariant under the $D_6$-action of $\Re$.
\end{lem}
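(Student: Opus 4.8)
The plan is to combine the explicit description of the center from Theorem~\ref{thm:Z(R)} with the $D_6$-invariance of the subalgebra $\Ce$ and of a distinguished Casimir element. Since ${\rm char}\,\F=0$, Lemma~\ref{lem:Z(R)=poly} gives $Z(\Re)=\Ce[\Omega]$ for any Casimir element $\Omega$, and in particular we may take $\Omega=\Omega_A$, which is a bona fide element of $\Re$ by Proposition~\ref{prop:CasABACas}. The key observation is that the $D_6$-action permutes $\{\Omega_A,\Omega_B,\Omega_C\}$ by Lemma~\ref{lem:D6&OmegaABC} and stabilizes $\Ce$ by Lemma~\ref{lemma:D6&Ce}; moreover, by Proposition~\ref{prop:CasABACas} all three of $\Omega_A,\Omega_B,\Omega_C$ lie in the single coset $\Omega_A+\Ce$.

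First I would take an arbitrary generator $g\in D_6$ and an arbitrary element $z\in Z(\Re)$, and show $g(z)\in Z(\Re)$. By Lemma~\ref{lem:Z(R)=poly}, $z$ is a polynomial in $\Omega_A$ with coefficients in $\Ce$; that is, $z=\sum_\ell c_\ell \Omega_A^\ell$ with $c_\ell\in\Ce$. Applying the ring (anti)automorphism $g$ and using that $g$ restricts to a bijection of $\Ce$ onto itself (Lemma~\ref{lemma:D6&Ce}), I would obtain $g(z)=\sum_\ell g(c_\ell)\,g(\Omega_A)^{\ell}$, where each $g(c_\ell)\in\Ce$ and $g(\Omega_A)\in\{\Omega_A,\Omega_B,\Omega_C\}$ by Lemma~\ref{lem:D6&OmegaABC}. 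Here one must be slightly careful that $\sigma$ and $\tau$ act as antiautomorphisms, so a priori $g(ab)=g(b)g(a)$; but since $\Ce$ is central and $g(\Omega_A)$ is again a Casimir element and hence central by Proposition~\ref{prop:CasCentral}, the order of multiplication is immaterial and the formula for $g(z)$ is valid.

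The main step is then to recognize $g(\Omega_A)$ as a Casimir element and conclude $g(z)\in Z(\Re)$. Since $g(\Omega_A)\in\{\Omega_A,\Omega_B,\Omega_C\}$ differs from $\Omega_A$ by an element of $\Ce$ (by Proposition~\ref{prop:CasABACas}, all three lie in $\Omega_A+\Ce$), we may write $g(\Omega_A)=\Omega_A+c$ for some $c\in\Ce$. Substituting this into $g(z)=\sum_\ell g(c_\ell)(\Omega_A+c)^\ell$ and expanding, the centrality of $\Ce$ ensures every term is a product of central elements times a power of $\Omega_A$, so $g(z)\in\Ce[\Omega_A]=Z(\Re)$. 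Thus $g(Z(\Re))\subseteq Z(\Re)$; applying the same to $g^{-1}$ gives the reverse containment, so $Z(\Re)$ is $D_6$-invariant.

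I do not expect any serious obstacle here: the statement is essentially a formal consequence of the already-established facts that $Z(\Re)=\Ce[\Omega]$, that $\Ce$ is $D_6$-invariant, and that the Casimir class $\Omega_A+\Ce$ is $D_6$-invariant (Lemma~\ref{lemma:OmegaABC&D6}). The only point requiring a moment's attention is the antiautomorphism issue, which is harmless precisely because every element in sight ($\Ce$ and the Casimir elements) is central. In fact, a cleaner phrasing avoids coefficients entirely: since $Z(\Re)=\Ce[\Omega_A]$ is generated as an algebra by $\Ce\cup\{\Omega_A\}$, it suffices to check that $g$ maps this generating set into $Z(\Re)$, which is exactly the content of Lemma~\ref{lemma:D6&Ce} together with Lemma~\ref{lem:D6&OmegaABC} and Proposition~\ref{prop:CasABACas}; I would likely present the argument in this more economical form.
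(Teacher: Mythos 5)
Your proposal is correct and follows essentially the same route as the paper, whose proof simply cites the $D_6$-invariance of $\Ce$, the $D_6$-invariance of the Casimir class, and the description $Z(\Re)=\Ce[\Omega]$ from Theorem~\ref{thm:Z(R)}. Your extra care about the antiautomorphism issue is a reasonable (and harmless) elaboration of the same argument.
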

\begin{proof}
Use Lemma \ref{lemma:D6&Ce}, Lemma \ref{lemma:OmegaABC&D6}, and Theorem  \ref{thm:Z(R)}.
\end{proof}

\bibliographystyle{amsplain}

\begin{thebibliography}{10}

\bibitem{Alnajjar2010}
H.~Alnajjar, \emph{Leonard pairs from the equitable basis of
  $\mathfrak{sl}_2$}, Electron. J. Linear Algebra (2010), Article 36.

\bibitem{bergman}
G.~Bergman, \emph{The diamond lemma for ring theory}, Adv. Math. \textbf{{\bf
  29}} (1978), 178--218.


\bibitem{SH:2017-2}
S.~Bockting-Conrad and H.-W. Huang, \emph{{The universal enveloping algebra of $\mathfrak{sl}_2$ and the
  Racah algebra}}, Comm. in Algebra (2019), Accepted.
  
\bibitem{SH:2017-3}
\bysame, \emph{{The three-fold tensor product of $U(\mathfrak{sl}_2)$ and the
  Racah algebra}}, in preparation.

\bibitem{carter}
R.~Carter, \emph{Lie algebras of finite and affine type}, Cambridge Studies in
  Advanced Mathematics 96., Cambridge University Press, 2005.

\bibitem{Galbert}
O.F. Gal'bert, Y.~Granovski{\u\i}, and A.~Zhedanov, \emph{{Dynamical symmetry
  of anisotropic singulgar oscillator}}, Phys. Lett. A \textbf{153} (1991),
  177--180.

\bibitem{LTRacah}
S.~Gao, Y.~Wang, and Bo~Hou, \emph{{The classification of Leonard triples of
  Racah type}}, Linear Algebra Appl. \textbf{439} (2013), 1834--1861.

\bibitem{integrable2014-2}
V.X. Genest, L.~Vinet, and A.~Zhedanov, \emph{{Superintegrability in two
  dimensions and the Racah--Wilson algebra}}, Lett. Math. Phys. \textbf{104}
  (2014), 931--952.

\bibitem{Racahproblem2014}
\bysame, \emph{{The equitable Racah algebra from three $\mathfrak{su}(1,1)$
  algebras}}, J. Phys. A: Math. Theor. \textbf{47} (2014), no. 2, 025203, 12
  pp.

\bibitem{integrable2014-1}
\bysame, \emph{{The Racah algebra and superintegrable models}}, J. Phys.: Conf.
  Ser. \textbf{512} (2014), 012011.

\bibitem{R&BI2015}
\bysame, \emph{{Embeddings of the Racah algebra into the Bannai--Ito algebra}},
  SIGMA \textbf{11} (2015), 050, 11 pp.

\bibitem{zhedanov1988}
Y.~Granovski{\u\i} and A.~Zhedanov, \emph{{Nature of the symmetry group of the
  $6j$-symbol}}, JETP \textbf{94} (1988), 49--54.

\bibitem{quadratic1989}
\bysame, \emph{{Exactly solvable problems and their quadratic algebras}},
  DONFTI-89-7 (1989), Preprint.

\bibitem{hiddensymmetry1}
\bysame, \emph{Quadratic algebra as a `hidden' symmetry of the {H}artmann
  potential}, J. Phys. A Math. Gen. \textbf{24} (1991), 3887--3894.

\bibitem{quadratic1991}
Y.~Granovski{\u\i}, A.~Zhedanov, and I.~M. Lutsenko, \emph{{Quadratic algebras
  and dynamical symmetry of the {S}chr{\"{o}}dinger equation}}, Soviet Phys.
  JETP \textbf{72} (1991), 205--209, translated from Zh.~Eksper.~Teoret.~Fiz.
  \textbf{99} (1991), no. 2, 369--377 (Russian).

\bibitem{quadratic1992}
\bysame, \emph{{Mutual Integrability, quadratic algebras, and dynamical
  symmetry}}, Ann. Phys. \textbf{217} (1992), 1--20.

\bibitem{H:2018-1}
H.-W. Huang, \emph{{The Racah algebra as a subalgebra of 
the Bannai--Ito algebra}},
  Submitted.
  
\bibitem{SH:2019-1}
H.-W. Huang and S.~Bockting-Conrad, \emph{{Finite-dimensional irreducible modules of the Racah algebra at characteristic zero}}, Submitted.



\bibitem{Kalnins3}
E.G. Kalnins, W.~Miller Jr., and G.S. Pogosyan, \emph{Superintegrability and
  associated polynomial solutions: {E}uclidean space and the sphere in two
  dimensions}, J. Math. Phys. \textbf{37} (1996), 6439--6467.

\bibitem{Kalnins4}
\bysame, \emph{Superintegrability on the two-dimensional hyperboloid}, J. Math.
  Phys. \textbf{38} (1997), 5416--5433.

\bibitem{Kalnins1}
E.G. Kalnins, W.~Miller, and S.~Post, \emph{{W}ilson polynomials and the
  generic superintegrable system on the 2-sphere}, J. Phys. A: Math. Theor.
  \textbf{40} (2007), 11525--11538.

\bibitem{Kalnins2}
\bysame, \emph{{Contractions of 2{D} $2^{nd}$ order quantum superintegrable
  systems and the Askey scheme for hypergeometric orthogonal polynomials}},
  SIGMA \textbf{9} (2013), 57--84.

\bibitem{Levy1965}
J.-M. L\'{e}vy-Leblond and M.~L\'{e}vy-Nahas, \emph{{Symmetrical coupling of
  three angular momenta}}, J. Math.  Phys. \textbf{6} (1965),
  1372.

\bibitem{sklyanin}
E.K. Sklyanin, \emph{Some algebraic structures connected with the yang-baxter
  equation}, Functional Anal. Appl. \textbf{16} (1982), no.~4, 263--279,
  translated from Funktsional. Anal. i Prilozhen. \textbf{16} (1982), no. 4,
  27--34, 96. (Russian).

\bibitem{lp&awrelation}
P.~Terwilliger and R.~Vidunas, \emph{{Leonard pairs and the Askey--Wilson
  relations}}, Journal of Algebra and Its Applications \textbf{3} (2004),
  411--426.

\bibitem{hiddensymmetry2}
A.~Zhedanov, \emph{Hidden symmetry of {A}skey--{W}ilson polynomials}, Theor.
  Math. Phys. \textbf{89} (1991), 1146--1157.

\bibitem{hiddensymmetry3}
\bysame, \emph{Hidden symmetry algebra and overlap coefficients for two
  ring-shaped potentials}, J. Phys. A Math. Gen. \textbf{26} (1993), 4633.

\end{thebibliography}
\providecommand{\bysame}{\leavevmode\hbox to3em{\hrulefill}\thinspace}
\providecommand{\MR}{\relax\ifhmode\unskip\space\fi MR }
\providecommand{\MRhref}[2]{%
  \href{http://www.ams.org/mathscinet-getitem?mr=#1}{#2}
}
\providecommand{\href}[2]{#2}

\end{document}